\DeclareMathOperator{\End}{End}
\DeclareMathOperator{\Inf}{Inf}
\DeclareMathOperator{\Res}{Res}
\DeclareMathOperator{\Syl}{Syl}    
\DeclareMathOperator{\Irr}{Irr}   
\DeclareMathOperator{\proj}{(proj)}
\newcommand{\PGL}{{\operatorname{PGL}}}
\newcommand{\PSL}{{\operatorname{PSL}}}
\newcommand{\PgammaL}{{\operatorname{P\Gamma L}}}
\newcommand{\SL}{{\operatorname{SL}}}
\newtheorem{thm}{Theorem}[section]
\newtheorem{cor}[thm]{Corollary}
\newtheorem{lem}[thm]{Lemma}
\newtheorem{prop}[thm]{Proposition}
\theoremstyle{definition}
\newtheorem{rem}[thm]{Remark}
\newtheorem{ex}[thm]{Example}
\newtheorem{ass}[thm]{Hypotheses}
\theoremstyle{remark}
\newcommand{\IF}{\mathbb{F}}
\newcommand{\IZ}{\mathbb{Z}}
\newcommand{\fA}{{\mathfrak{A}}}
\newcommand{\fS}{{\mathfrak{S}}}
\let\lra=\longrightarrow
\newcommand{\stmod}{\mathsf{stmod}}  
\newcommand{\indhg}[2]{\!\uparrow_{#1}^{#2}}   
\newcommand{\resgh}[2]{\!\downarrow^{#1}_{#2}}   
\begin{document}


\title{Endo-trivial modules for finite groups 
\\
with dihedral Sylow 2-subgroup}

\author{Shigeo Koshitani and Caroline Lassueur}
\address{Department of Mathematics, Graduate School of Science,
Chiba University, 1-33 Yayoi-cho, Inage-ku, Chiba, 263-8522, Japan.}
\email{koshitan@math.s.chiba-u.ac.jp}
\address{FB Mathematik, TU Kaiserslautern, Postfach 3049,
         67653 Kaiserslautern, Germany.}
\email{lassueur@mathematik.uni-kl.de}
\thanks{The first author was partially supported by the Japan Society for 
Promotion of Science (JSPS), Grant-in-Aid for Scientific Research
(C)23540007, 2011-2014 and (C)15K04776, 2015--2018.
The second author acknowledges partial financial 
support by SNSF Fellowship for Prospective 
Researchers PBELP2$_{-}$143516.}

\keywords{Modular representation theory, endo-trivial modules, 
dihedral Sylow $2$-subgroup, twisted group algebras}
\subjclass[2000]{20C20, 20C05, 20C25}

\begin{abstract}
Let $k$ be an algebraically closed field of characteristic $p>0$
and $G$ a finite group.
We provide a description of the torsion subgroup $TT(G)$ 
of the finitely generated abelian group  
$T(G)$ of endo-trivial $kG$-modules when  $p=2$ and $G$ has 
a dihedral Sylow $2$-subgroup $P$.
We prove that, in the case $|P|\geq 8$,
$TT(G)\cong X(G)$ the group of one-dimensional $kG$-modules, 
except possibly when $G/O_{2'}(G)\cong \fA_6$, the alternating group of 
degree $6$; 
in which case $G$ may have $9$-dimensional simple torsion 
endo-trivial modules. 
We also prove a similar result in the case $|P|=4$, although the situation 
is more involved.
Our results complement the tame-representation type investigation 
of endo-trivial modules  
started by Carlson-Mazza-Th{\'e}venaz  in the cases of 
semi-dihedral 
and generalized quaternion Sylow 2-subgroups.
Furthermore we provide a general reduction result, valid at any prime $p$, 
to recover the structure of $TT(G)$ 
from the structure of $TT(G/H)$, where $H$ is a normal $p'$-subgroup  of~$G$.
\end{abstract}

\maketitle

\pagestyle{myheadings}
\markboth{\textsc{S. Koshitani and C. Lassueur}}
{\textsc{Endo-trivial modules for finite groups with dihedral Sylow 2-subgroup}}


\section{Introduction}\label{sec:intro} Let $G$ be a finite group and $k$ 
a field of prime characteristic $p$ dividing the order of $G$. 
A finitely generated $kG$-module $V$  is called {\it endo-trivial} if, 
as $kG$-modules, 
$${\mathrm{End}}_k(V) \cong k_G\oplus Q\,,$$ 
where $k_G$ is the trivial $kG$-module 
and $Q$ is a projective $kG$-module. The
tensor product over $k$ induces a group structure on the set of
isomorphism classes of indecomposable endo-trivial $kG$-modules, called
the group of endo-trivial modules and denoted by $T(G)$. This group is
finitely generated and it is of particular
interest in modular representation theory as it forms an important part
of the Picard group of self-equivalences of the stable category of
finitely generated $kG$-modules. In particular the self-equivalences of
Morita type are induced by tensoring 
with endo-trivial modules. 

As a matter of fact, endo-trivial modules 
have seen a considerable interest since defined by
Dade in 1978  \cite{Dade1978} as a by-product of 
the Dade-Glauberman-Nagao correspondence (see \cite[\S 5.12]{NT89}).
In \cite{Dade1978} a classification of the endo-trivial modules over 
finite abelian $p$-groups is established. 
Since then a full classification has been obtained
over finite $p$-groups through the joint efforts of several authors, 
see e.g. the survey article \cite{Th07} and the references
therein. Moreover many contributions towards a general classification 
of endo-trivial
modules  have  been obtained over the past ten years for several families of
finite groups 
(see e.g. 
\cite{CMN06, MT07, CMN09, CMT11, Ca12, NR12, CMT13, KL14, CaTh15, LaMaz15II} 
and the references therein).  
However, the problem of describing the structure of  $T(G)$ and its 
elements  for an arbitrary finite group $G$
remains open in general. In particular the problem of determining 
the structure of the torsion
subgroup of $T(G)$, denoted by $TT(G)$, is a resisting part of the problem.
\par
Provided that a Sylow $p$-subgroup $P$ of $G$ is neither cyclic, 
nor dihedral, nor semi-dihedral, the group $TT(G)$ coincides with the group 
$$K(G)=\ker \big(\Res^G_P:T(G)\lra T(P):[V]\mapsto[V\resgh{G}{P}] \big)\,.$$
(See Lemma~\ref{lem:TT(G)}.) In particular, the group $K(G)$ consists 
of the classes of the trivial source endo-trivial modules.

Our first main result holds at any prime characteristic $p$ 
and relates the structure of the group $K(G)$ to the structure of 
$K(G/H)$, where $H$ is a normal $p'$-subgroup of $G$.
\begin{thm}\label{thm:PreMain}\label{cor:KXK}
Let $G$ be a finite group. Assume that the $p$-rank of $G$ is 
at least $2$ and that $G$ has no strongly $p$-embedded subgroups.
Let $H \vartriangleleft G$ with $p\,{\not|}\,|H|$, and set 
$\overline G:=G/H$.
If  ${\mathrm{H}}^2(\overline G,k^\times)=1$, then 
$$K(G)=X(G)+\Inf_{\overline{G}}^{G}(K(\overline{G})) 
\cong X(G)+K(\overline{G})\,.$$
\end{thm}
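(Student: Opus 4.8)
The plan is to prove the two displayed relations separately. The isomorphism $X(G)+K(\overline G)\cong X(G)+\Inf_{\overline G}^G(K(\overline G))$ is the easier half: since $H$ is a normal $p'$-subgroup, inflation $\Inf_{\overline G}^G$ induces an injective group homomorphism $T(\overline G)\hookrightarrow T(G)$ (a standard fact: an endo-trivial $k\overline G$-module inflates to an endo-trivial $kG$-module because $kH$ is semisimple, and non-projective modules inflate to non-projective modules here), and this map sends $K(\overline G)$ into $K(G)$ because restriction commutes with inflation along the Sylow subgroups $P\cong \overline P$. So the content is the equality

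First I would prove the inclusion $X(G)+\Inf_{\overline G}^G(K(\overline G))\subseteq K(G)$, which is immediate from the two observations just made together with the fact that $X(G)\subseteq K(G)$ (one-dimensional modules are trivial source modules restricting trivially to $P$ since $P$ has no nontrivial one-dimensional $kP$-module in characteristic $p$... more precisely $X(G)\subseteq T(G)$ is torsion and restricts into $X(P)=0$). The real work is the reverse inclusion $K(G)\subseteq X(G)+\Inf_{\overline G}^G(K(\overline G))$. Given $[V]\in K(G)$, the module $V$ is an indecomposable trivial source module with vertex $P$ and $V\resgh{G}{P}\cong k_P$. I would analyze the restriction $V\resgh{G}{H}$: since $H$ is a $p'$-group and $V$ is $H$-projective relative to nothing beyond $P\cap H=1$, in fact $V\resgh{G}{H}$ is a direct sum of simple $kH$-modules, and by Clifford theory applied to the normal subgroup $H$ all these summands are $G$-conjugate, hence $V\resgh{G}{H}$ is isotypic, say a multiple of a simple $kH$-module $S$. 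The hypothesis that $V$ has trivial source forces $S$ to be in the block of $k_H$... one checks that $S$ must be one-dimensional and $G$-stable. The stabilizer being all of $G$ and $S$ being $G$-invariant, together with the vanishing $\mathrm{H}^2(\overline G,k^\times)=1$, lets me invoke the theory of twisted group algebras / the Külshammer–Puig–type reduction: $V$ corresponds to a module over a twisted group algebra $k_\alpha[\overline G]$ for some $\alpha\in\mathrm{H}^2(\overline G,k^\times)$, and the cohomological hypothesis forces $\alpha$ to be trivial, so after twisting by a suitable one-dimensional character in $X(G)$ the module $V$ becomes inflated from $\overline G$. Tracking that the inflated module lies in $K(\overline G)$ (again because restriction to $\overline P$ is trivial) finishes the argument.

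The main obstacle I anticipate is the bookkeeping in the twisted-group-algebra step: showing cleanly that an endo-trivial $kG$-module with $H$-fixed points affording a $G$-invariant one-dimensional $kH$-module descends, up to a twist by an element of $X(G)$, to an honest (untwisted) $k\overline G$-module, and that the obstruction to doing so is precisely a class in $\mathrm{H}^2(\overline G,k^\times)$ that vanishes by hypothesis. One has to be careful that the descended module is again endo-trivial and again has trivial restriction to the Sylow subgroup — the latter is automatic once one knows $V\resgh{G}{P}\cong k_P$ and $P$ maps isomorphically to $\overline P$, but one must verify compatibility of all the identifications. A secondary technical point is ensuring the relevant simple $kH$-summand is genuinely one-dimensional; this uses that $V$ has trivial source, hence $V$ is a direct summand of $k[G/P]$, so $V\resgh{G}{H}$ is a summand of $k[H/(H\cap P)]^{\,|G:HP|}=k[H]^{\,|G:HP|}$ only after twisting — more carefully, of $\mathrm{Res}^G_H\mathrm{Ind}_P^G k_P$, which by Mackey is a sum of modules induced from subgroups of $H$ of the form $H\cap {}^g\!P=1$, i.e. a sum of copies of $kH$; so $V\resgh{G}{H}$ is projective as a $kH$-module, and since $kH$ is semisimple every simple $kH$-module is projective, giving no constraint that way — instead the one-dimensionality comes from $S$ lying in the principal block together with $G$-invariance. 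I would isolate this as a preliminary lemma before assembling the proof.
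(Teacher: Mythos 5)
Your overall shape is right (Clifford theory over the normal $p'$-subgroup, extend the simple $kH$-constituent via Schur, descend through a twisted group algebra for $\overline G$, and kill the cocycle with $\mathrm H^2(\overline G,k^\times)=1$), but the two pivotal facts you need about the simple $kH$-constituent $S$ of $V\resgh{G}{H}$ are not justified in your sketch, and the justification you gesture at for one of them is actually false.

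First, $G$-invariance of $S$. Clifford theory only tells you that the simple summands of $V\resgh{G}{H}$ form a single $G$-orbit, not that the orbit is a singleton; you assert "the stabilizer being all of $G$" without argument. This is exactly where the hypothesis that $G$ has \emph{no strongly $p$-embedded subgroups} enters, and you never use it. The paper's Lemma~\ref{GInvariantGeneral} proves $G$-invariance by passing to the Fong--Reynolds correspondent over the inertia group $T_G(S)$ and observing that, since $V$ is then induced from $T_G(S)$, Navarro--Robinson's result (\cite[Lemma~1(iv)]{NR12}) would force $T_G(S)$ to be strongly $p$-embedded if it were proper.

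Second, $\dim_k S=1$. Your proposed justification — that $S$ lies in the principal block of $kH$ — is wrong. There is no reason $V$ (or $S$) should lie in the principal block, and indeed Example~\ref{ex:CentralProduct} produces $V\in K(G)$ lying in non-principal blocks whose restrictions to $H=C_9$ are nontrivial one-dimensional characters, so $S\ne k_H$; your Mackey/trivial-source argument, as you note yourself, is vacuous since $kH$ is semisimple. The paper gets $\dim_k S=1$ by a quite different route: after extending $S$ to a one-dimensional $k^\alpha G$-module and writing $V\cong\widehat S\otimes_k W$, it restricts to $H\rtimes P$ (where the cocycle trivializes), invokes \cite[Lemma~1(iii)]{NR12} to see that $\widehat S\resgh{}{H\rtimes P}$ is a \emph{simple} endo-trivial $k(H\rtimes P)$-module, and then applies the theorem of \cite{NR12} that simple endo-trivial modules for $p$-nilpotent groups of $p$-rank $\ge 2$ have dimension one. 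This is precisely where the $p$-rank $\ge 2$ hypothesis is used — another hypothesis that never appears in your argument. Since both standing hypotheses of the theorem are unused in your sketch, these are genuine gaps rather than omitted routine details.
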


However, the main objective of this article is the determination of 
the structure of the group $T(G)$, when 
the Sylow $2$-subgroups of $G$ are dihedral $2$-groups.
An investigation of 
endo-trivial modules in finite- and tame-representation types 
was started by Mazza-Th\'{e}venaz \cite{MT07} 
in the cyclic case, Carlson-Mazza-Th\'{e}venaz \cite{CMT13} in the 
generalized quaternion and 
semi-dihedral cases, and continued by the authors \cite{KL14} 
in the Klein-four case. Therefore the 
dihedral case was the last untreated  tame-representation type case.
However, we emphasize that \cite{CMT13} does not provide a description 
of the structure of the group $K(G)$ in the semi-dihedral case, 
and which is still an open question.
\par
Since the group $T(G)$ is finitely generated 
(see \cite[Corollary 2.5]{CMN06}), 
we may write $T(G)=TT(G)\oplus TF(G)$, where $TF(G)$ denotes 
a free abelian complement of $TT(G)$ in $T(G)$.
We recall that, when a Sylow $2$-subgroup of $G$ is 
dihedral of order of at least~$8$, 
the $\IZ$-rank, as well as generators for the torsion-free part 
of $T(G)$ are known since 1980's.  
More precisely, since the $2$-rank of $G$ is $2$, by 
\cite[Theorem 3.1]{CMN06}, 
the $\IZ$-rank of $TF(G)$ coincides with the number of $G$-conjugacy 
classes of Klein-four 
subgroups of $G$, and this number is $2$ by 
\cite[Proposition 1.48(iv)]{Go83}.
Then by \cite[\S4]{AC}, we have 
$$TF(G)\cong\left<[\Omega^1(k_G)], [M]\right>\cong {\mathbb{Z}}^2$$ 
where $\Omega^1(k_G)$ denotes the first syzygy 
module of the trivial $kG$-module $k_G$, 
and $M$ is an indecomposable direct summand of the heart of the 
projective cover of $k_G$. (Note that there are two such 
direct summands and $M$ can be chosen to be any of them.)
\par
As a consequence, in this article we focus our attention on the determination 
of the torsion subgroup 
$TT(G)$ of  $T(G)$. We recall  that the group $X(G)$ of one-dimensional 
$kG$-modules endowed with the 
tensor product $\otimes_k$ always identifies with a subgroup of~$TT(G)$.
Our main result about the structure of $TT(G)$ in the dihedral case 
is the following.
\enlargethispage{1cm}
\begin{thm}\label{Main}
Assume that  $G$ is a finite group with  a dihedral
Sylow $2$-subgroup of order at least $8$, and let $T(G)$
be the abelian group of endo-trivial $kG$-modules
over  an algebraically closed field $k$ of characteristic $2$.
Set $\overline{G}:=G/O_{2'}(G)$.
Then the following hold:
   \begin{enumerate}
     \item[{\rm{(a)}}] If $\overline{G}\ncong \fA_6$, then $TT(G)=X(G)$.
     \item[{\rm{(b)}}] If $\overline{G}\cong \fA_6$, then either
        \begin{itemize}
  \item[(i)]  $TT(G)=X(G)$, or 
  \item[(ii)]  if there exists an indecomposable endo-trivial 
$kG$-module $V$ such that 
$[V]\in TT(G)\setminus X(G)$, then 
   $\dim_k V=9$, $V$ is simple, and $TT(G)/X(G)$ 
is an elementary abelian $3$-group.
\end{itemize}
\end{enumerate}
\end{thm}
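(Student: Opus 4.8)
The plan is to reduce the general dihedral case to two more tractable situations: the study of $K(G)$ via the structure theory developed for Theorem~\ref{thm:PreMain}, and an explicit analysis of the possible quotients $\overline G=G/O_{2'}(G)$ coming from the classification of finite groups with dihedral Sylow $2$-subgroups (the Gorenstein--Walter theorem). First I would invoke Lemma~\ref{lem:TT(G)} to identify $TT(G)$ with $K(G)$ in the dihedral case $|P|\geq 8$, so that $[V]\in TT(G)$ forces $V$ to be a trivial source module. Then, setting $H:=O_{2'}(G)$ and $\overline G=G/H$, I would like to apply Theorem~\ref{thm:PreMain} to get $K(G)=X(G)+\Inf_{\overline G}^G(K(\overline G))$; this requires checking that $G$ has no strongly $p$-embedded subgroup (automatic since the $2$-rank is $2$ and $P$ is dihedral, not cyclic nor quaternion, so $G$ has more than one conjugacy class of involutions) and that $\mathrm{H}^2(\overline G,k^\times)=1$. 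The Schur multiplier condition is where one must be slightly careful: by Gorenstein--Walter, $\overline G$ is either a $2$-nilpotent-by-stuff group with dihedral Sylow, a subgroup of $\PGL_2(q)$ containing $\PSL_2(q)$ for $q$ odd, or $\fA_7$; for these one reads off the relevant $\mathrm{H}^2$ from known tables, noting that it may fail to vanish precisely for $\PSL_2(9)\cong\fA_6$ (Schur multiplier of order $6$ in char $2$ contributes a $3$-part), which is exactly the exceptional case carved out in the statement.

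Granting the reduction, the problem becomes the computation of $K(\overline G)$ for $\overline G$ one of the groups on the Gorenstein--Walter list with $O_{2'}(\overline G)=1$. I would split into cases: (1) if $\overline G$ has a normal $2$-complement then $\overline G=P$ is dihedral and $K(P)=0$ by the known $p$-group classification, so $TT(G)=X(G)$; (2) if $\PSL_2(q)\trianglelefteq\overline G\leq\PGL_2(q)$ with $q$ odd and $q\neq 9$, I would show $K(\overline G)=0$ by exhibiting that every trivial source endo-trivial module restricts trivially to $P$ and is therefore forced to be one-dimensional — here one uses the Brauer tree / known module structure of $\PSL_2(q)$ and $\PGL_2(q)$ in characteristic $2$ (the principal block is a Brauer tree algebra, so its trivial source modules are well understood) together with the fact that $X$ of these groups is already accounted for; (3) if $\overline G=\fA_7$, a direct check (the $2$-modular character table and the known structure of trivial source modules, or an appeal to \cite{KL14}-style arguments) gives $K(\fA_7)=0$; (4) the remaining case $\overline G=\fA_6$ (or $\overline G$ with $\PSL_2(9)\trianglelefteq\overline G\leq\PGL_2(9)$, i.e.\ $S_6$, $M_{10}$, $\mathrm{P\Gamma L}_2(9)$), where the $3$-fold cover $3.\fA_6$ has a $9$-dimensional faithful simple module in characteristic $2$, and one checks that this module — or rather the corresponding module for $G$ — is endo-trivial and of trivial source, generating the possible extra torsion. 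One then verifies that any torsion class not in $X(G)$ must come from such a $9$-dimensional simple module and that the quotient $TT(G)/X(G)$ embeds into something killed by $3$.

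For part (b) more precisely, after the reduction I would argue that $K(\overline G)$ for $\overline G$ in the $\fA_6$-family is a $3$-group: the primes dividing $|K(\overline G)|$ divide the order of the Schur multiplier prime to $2$ (a general fact about trivial source endo-trivial modules lifting to covering groups), which here is $3$. To see $TT(G)/X(G)$ is \emph{elementary} abelian $3$, I would show that if $[V]$ has order $9$ then $V^{\otimes 3}$ would be an endo-trivial module of $k$-dimension $\equiv 1\pmod{|P|}$ forced into $X(G)$, and chase dimensions to derive a contradiction — equivalently, use that the relevant cover $9.\fA_6$ does not exist while $3.\fA_6$ does, so only the $3$-torsion from a single layer survives. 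That $V$ must be $9$-dimensional and simple follows by computing $\dim_k V\pmod{|P|}$: endo-triviality forces $\dim_k V\equiv \pm 1\pmod{|P|}$, and among the faithful modules of $3.\fA_6$ (tensored up to $G$) the only candidate of trivial source with the right congruence and the right stable behaviour is the $9$-dimensional one; simplicity then follows since any proper submodule would violate endo-triviality by a Loewy-length argument in the principal block.

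The main obstacle I anticipate is case (2)–(3), i.e.\ proving $K(\overline G)=0$ for $\PSL_2(q)\leq\overline G\leq\PGL_2(q)$ with $q$ odd, $q\neq 9$, and for $\fA_7$: one must rule out \emph{all} non-trivial trivial source endo-trivial modules, not merely the obvious ones, and the syzygies $\Omega^n(k)$ already exhaust the torsion-free part, so the argument has to be delicate about which indecomposable trivial source modules can be endo-trivial. I would handle this by combining (i) the structure of the principal $2$-block of $\PSL_2(q)$ as a Brauer tree algebra — in $\PGL_2(q)$ the tree is a line with an exceptional vertex — so that trivial source modules in the principal block are classified and their endomorphism rings can be inspected directly; and (ii) the fact that a trivial source endo-trivial module has vertex $P$ and its Green correspondent is the trivial $kP$-module, which severely constrains the Brauer tree position; the only surviving possibility is a one-dimensional module, i.e.\ an element of $X(\overline G)$. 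Any residual cases not covered by the Brauer-tree picture (a handful of small $q$, and $\fA_7$) are finite in number and can be dispatched by explicit computation with {\sf GAP}/{\sf Magma}, which I would flag as the computational backbone of this step.
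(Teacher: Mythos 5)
Your overall strategy — identify $TT(G)$ with $K(G)$, reduce modulo $O_{2'}(G)$ via Theorem~\ref{thm:PreMain}, then classify $K(\overline G)$ along the Gorenstein--Walter list — matches the paper's structure in outline, but there is one genuine gap and several places where the argument diverges from what the paper actually does.

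The gap: you assert that $\mathrm{H}^2(\overline G,k^\times)$ ``may fail to vanish precisely for $\PSL_2(9)\cong\fA_6$.'' This is false. Lemma~\ref{Multiplier} shows that the $2'$-part of the Schur multiplier is non-trivial (of order $3$) not only for $\fA_6$ but also for $\fA_7$ and $\PGL(2,9)$. Consequently Theorem~\ref{thm:PreMain} does \emph{not} apply directly when $\overline G\cong\fA_7$ or $\overline G\cong\PGL(2,9)$, and your proposed ``direct check that $K(\fA_7)=0$'' is not by itself a valid reduction: the relevant decomposition $V\cong 1b\otimes_k W$ from Theorem~\ref{EndoTrivial}(a) has $W$ living on the \emph{twisted} group algebra $k^{\overline\alpha^{-1}}\overline G$, so what you must show is that the corresponding indecomposable endo-trivial module over the triple cover $3.\fA_7$ (respectively $3.\PGL(2,9)$) is trivial. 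That is exactly Lemma~\ref{lem:3A6&3A7}(b),(c) in the paper, proved by a Green-correspondence-plus-character computation. Your proposal does not provide this step, and since those covers really do carry non-trivial twisted modules in principle, the step is not a formality.

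Two further points of divergence worth noting. First, for the ``case (2)--(3)'' analysis you anticipate needing delicate Brauer-tree arguments to rule out trivial source endo-trivial modules for $\PSL_2(q)$ and $\PGL_2(q)$; the paper's Proposition~\ref{H=1} does this much more cheaply by observing that $\Aut(D_{2^n})$ is a $2$-group for $2^n\geq 8$, whence $N_G(P)=P\times O_{2'}(C_G(P))$, and then Lemma~\ref{lem:TT(G)}(b) embeds $K(G)$ into $X(N_G(P))$, which is trivial or cyclic of odd order $f$ matching $X(G)$; no Brauer-tree inspection is needed. Also, there is no need to consider $S_6$, $M_{10}$, or $\mathrm{P\Gamma L}_2(9)$: these do not have dihedral Sylow $2$-subgroups, so they simply do not occur under the theorem's hypothesis. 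Second, your mechanism for ``elementary abelian $3$'' is backwards: you suggest deriving a contradiction by chasing dimensions of $V^{\otimes 3}$, but there is nothing to contradict; the correct statement (Theorem~\ref{EndoTrivial}(b) with $n=3$) is that $[V^{\otimes 3}]\in X(G)$ holds constructively, which is precisely what makes $TT(G)/X(G)$ elementary abelian of exponent dividing $3$. There is also no general ``primes dividing $|K(\overline G)|$ divide the $p'$-part of the Schur multiplier'' fact to appeal to; the paper obtains the $3$-group conclusion from the explicit structure in Lemma~\ref{lem:3A6&3A7}(a), not from such a principle.
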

\begin{rem}\label{rem:3A6}
Case (i) of Theorem~\ref{Main}(b) happens for example 
for $G=\fA_6$ (see \cite[Theorem 1.2]{CMN09}), whereas Case (ii) 
happens for example for $G=3.\fA_6$, the triple cover of $\fA_6$ 
(see Lemma~\ref{lem:3A6&3A7}). Moreover the central product 
$C_9\ast 3.\fA_6$ provides an example where there exist classes 
$[V]\in TT(G)$ such that $[V^{\otimes 3}]\in X(G)\!\setminus\! \{[k_G]\}$ 
(see Example~\ref{ex:CentralProduct}).\par
Furthermore, in the situation of Theorem~\ref{Main}(b)(ii), 
any  simple torsion endo-trivial $kG$-module $V$ of dimension~$9$ originates 
from the triple cover $3.\fA_6$ of the alternating group 
$\fA_6$ of degree~$6$ in the following way. 
By \cite[Theorem]{Ro11}, $E:=E(G/\ker(V))$ 
(the central product of all components of 
$G/\ker(V)$, see \cite[Definition 6.6.8]{Su86})
is quasi-simple and ${V}{\downarrow}^{G/\ker(V)}_E$ remains simple endo-trivial. 
Therefore, we must have $E=3.\fA_6$ since $3.\fA_6$ is the unique 
quasi-simple group with a 9-dimensional simple endo-trivial 
module in characteristic two by \cite[Proposition 3.8 and \S 4]{LMS13}. 
\end{rem}

\begin{cor}
If $G$ is a finite group with a dihedral Sylow $2$-subgroup of order 
at least~8 and $k$ is an algebraically closed field of characteristic $2$, 
then any indecomposable torsion endo-trivial
$kG$-module is simple, and hence  lifts uniquely  
to an ordinary irreducible 
character of~$G$.
\end{cor}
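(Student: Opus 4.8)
The plan is to deduce the corollary directly from Theorem~\ref{Main} together with the structural description of the torsion-free part $TF(G)$ recalled in the introduction. First I would observe that, by definition, an indecomposable torsion endo-trivial module $V$ represents a class $[V]\in TT(G)$. By Theorem~\ref{Main}, either $[V]\in X(G)$ — in which case $V$ is one-dimensional, hence trivially simple — or we are in the exceptional situation of Theorem~\ref{Main}(b)(ii), where $\overline G\cong\fA_6$ and the theorem asserts that $\dim_k V=9$ and $V$ is simple. In either case $V$ is simple, which is the first assertion.

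For the lifting statement, I would invoke the theory of trivial source modules: by Lemma~\ref{lem:TT(G)}, since the Sylow $2$-subgroup is dihedral of order at least~$8$ (hence non-cyclic, non-semi-dihedral, non-quaternion), we have $TT(G)=K(G)$, so every indecomposable torsion endo-trivial $kG$-module has trivial source. A trivial source module is a $p$-permutation module, and such modules lift to $\cO G$-lattices (Scott's theorem / the theory of $p$-permutation modules over a complete discrete valuation ring $\cO$ with residue field $k$), the lift being unique up to isomorphism. Thus $V$ lifts to an $\cO G$-lattice $\widehat V$, and since $V$ is simple and endo-trivial — in particular $\dim_k V$ is prime to $p=2$ — the lifted character is irreducible: indeed $\dim_k\End_k(V)\equiv 1\pmod{|P|}$ forces $\End_{kG}(V)=k$, and reduction mod the maximal ideal is an isomorphism on endomorphism rings of the lattice, so $\End_{\cO G}(\widehat V)=\cO$, whence $\IC\otimes_\cO\widehat V$ is irreducible. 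Hence $V$ lifts uniquely to an ordinary irreducible character.

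The main point to be careful about is the passage from "trivial source" to "lifts uniquely to an ordinary irreducible character": one must cite the correct form of the lifting theorem for $p$-permutation modules (uniqueness of the lift up to isomorphism) and then argue that the resulting virtual character is a genuine irreducible character rather than merely a $\IZ$-linear combination. The endo-triviality of $V$ is exactly what delivers this, through the congruence $\dim_k\End_k(V)\equiv 1\pmod{|P|}$ which, combined with simplicity of $V$ over the algebraically closed field $k$, pins down $\End_{kG}(V)=k$ and hence forces irreducibility after lifting. No genuinely new difficulty arises beyond assembling these standard ingredients with Theorem~\ref{Main}; the work is entirely in citing the appropriate $p$-permutation lifting result and checking the endomorphism-ring congruence.
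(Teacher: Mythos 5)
Your proposal is correct and follows essentially the same route as the paper's proof: invoke Theorem~\ref{Main} to conclude $V$ is simple, use Lemma~\ref{lem:TT(G)} to identify $TT(G)$ with $K(G)$ (so $V$ has trivial source), and cite the uniqueness of lifts for trivial source modules. The paper's own proof is terse and cites precisely these three ingredients, with \cite[Theorem 4.8.9(iii)]{NT89} playing the role of the $p$-permutation lifting result you invoke. One small nit on the justification of irreducibility of the lift: you claim that the endo-trivial congruence $\dim_k\End_k(V)\equiv 1\pmod{|P|}$ \emph{forces} $\End_{kG}(V)=k$, but this is not quite the right implication — $\dim_k\End_{kG}(V)$ equals $1$ plus the multiplicity of the projective cover of the trivial module in the projective part of $\End_k(V)$, which is not controlled by the congruence alone. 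What actually delivers $\End_{kG}(V)=k$ is Schur's lemma applied to the \emph{simple} module $V$ over the algebraically closed field $k$, and since you have already established simplicity from Theorem~\ref{Main} this step is sound; the appeal to the congruence is superfluous and slightly misleading rather than wrong. The remainder of your argument, that $\End_{\mathcal O G}(\widehat V)=\mathcal O$ and hence the lift affords an irreducible character, is a valid elaboration of the standard fact that a liftable simple module always lifts to an irreducible ordinary character, a fact the paper's one-line proof takes as understood.
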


\begin{proof} This follows immediately from
Theorem \ref{Main}, the fact that $TT(G)$ consists only of classes 
of trivial source modules (see Lemma~\ref{lem:TT(G)}), and the 
fact that trivial source modules lift uniquely
(see \cite[Theorem 4.8.9(iii)]{NT89}).
\end{proof}

Our new method, developed  to treat the dihedral case of order 
at least~8, also allows us to finish off 
the classification of torsion endo-trivial modules for finite groups 
with Klein-four Sylow $2$-subgroups, 
which we started in \cite{KL14}. We note that our results in this 
case are explicit, whereas those recently obtained by 
Carlson and Th\'{e}venaz in \cite{CaTh15} 
(where they treat the general question of 
computing the group $K(G)$ for finite groups $G$ with abelian 
Sylow $p$-subgroups) only provides an algorithmic method 
to identify the group $K(G)$. \par  
\begin{thm}\label{MainTheoremK}
Assume that $G$ is a finite group with a Klein-four Sylow $2$-subgroup $P$. 
Further, set
$TT_0(G):=\{ [V]\in TT(G)\,|\,
V \text{ indecomposable and }V\in B_{0}(G) \}$ 
where $B_0(G)$ is the principal block of $G$
and $\overline G:=G/O_{2'}(G)$.
Then $TT_0(G)\cong \IZ/3\IZ$,  any indecomposable $kG$-module $V$ 
with $[V]\in TT(G)$ lifts uniquely to an $\mathcal{O}G$-lattice  
$\widehat{V}$ affording an ordinary irreducible  character 
$\chi_{\widehat{V}}\in \Irr(G)$, and the structure of $TT(G)$ is as follows:
  \begin{enumerate}[\rm(a)]
   \item
If $\overline{G}\cong P$, then $TT(G)=X(G)$.
   \item
If $\overline{G}\cong \fA_4$, then $TT(G)=X(G)$.
    \item
If $\overline{G}\cong \fA_5$, then 
$$TT(G)\cong TT(G_0)=X(G_0)\,,$$
where $G_0$ is a strongly $2$-embedded subgroup in $G$ with
$G_0/O_{2'}(G_0)\cong\mathfrak A_4$.
Furthermore,  if $V$ is a non-trivial  indecomposable endo-trivial 
$kG$-module such that $[V]\in TT_0(G)$, then $\dim_k(V)=5$.

   \item
If $\overline{G}\cong \PSL(2,q)\rtimes C_f$, where $q>5$ is a power of 
an odd prime such that $q\equiv \pm 3\pmod{8}$ and $f$ is an odd integer, 
then  
$$TT(G)\cong X(G)\oplus TT_0(G)\cong X(G)\oplus \mathbb Z/3\mathbb Z\,.$$
Furthermore,  if $V$ is a non-trivial  indecomposable endo-trivial $
kG$-module such that $[V]\in TT_0(G)$, then $\dim_k(V)=(q-1)/2$ when 
$q\equiv 3\pmod{8}$ and $\dim_k(V)=q$ when $q\equiv 5\pmod{8}$.
 \end{enumerate}
\end{thm}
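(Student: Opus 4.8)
The plan is to establish Theorem~\ref{MainTheoremK} by reducing the analysis to the groups $\overline G = G/O_{2'}(G)$, whose possible isomorphism types are constrained by the Gorenstein–Walter classification of finite groups with Klein-four Sylow $2$-subgroups (namely $P$ itself, $\fA_4$, $\fA_5$, and $\PSL(2,q)\rtimes C_f$ as listed). The overall strategy mimics what we do in the dihedral case of order $\geq 8$: first determine $TT(G)$ \emph{up to} $X(G)$ by controlling trivial source endo-trivial modules, and then pin down the exact extension. The key structural input is Theorem~\ref{thm:PreMain}: since for every relevant $\overline G$ we have $p$-rank~$2$, no strongly $2$-embedded subgroup, and $\mathrm{H}^2(\overline G, k^\times)=1$ (the Schur multipliers of $\fA_4$, $\fA_5$, $\PSL(2,q)$, and their extensions by $C_f$ have order prime to~$2$ in the relevant cases, or the relevant cohomology vanishes), we get $K(G) = X(G) + \Inf(K(\overline G)) \cong X(G) + K(\overline G)$. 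Thus the problem reduces to computing $K(\overline G)$ and then $TT_0(\overline G)$ for each of the four families, and transporting back.

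The main steps I would carry out are as follows. First, treat the ``base'' cases $\overline G \cong P$ and $\overline G \cong \fA_4$: here $K(\overline G)$ is trivial (for $P$ a $p$-group $K(P)=1$ trivially; for $\fA_4$ one checks directly, e.g.\ via the structure of the principal block or via \cite{KL14}, that there is no non-trivial trivial source endo-trivial module), giving $TT(G) = X(G)$, which is parts (a) and (b). Second, treat $\overline G \cong \fA_5$: here $\fA_5 \cong \PSL(2,5)$ has a strongly $2$-embedded subgroup $\fA_4$, so Theorem~\ref{thm:PreMain} does not apply directly to $\fA_5$; instead I would invoke the general strongly-$2$-embedded-subgroup reduction — an endo-trivial module is determined by its restriction to (the normalizer of) a strongly $2$-embedded subgroup $G_0$ — to conclude $TT(G) \cong TT(G_0)$ with $G_0/O_{2'}(G_0)\cong \fA_4$, hence $=X(G_0)$ by part (b); the $5$-dimensional module statement comes from identifying the unique non-trivial trivial source endo-trivial module in $B_0(\fA_5)$ (the reduction mod $2$ of a $5$-dimensional ordinary character, equivalently a Green-correspondence/Brauer-tree computation on the principal block of $\fA_5$, which has cyclic defect with the tree a line). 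Third, for $\overline G \cong \PSL(2,q)\rtimes C_f$ with $q\equiv\pm 3\pmod 8$: the principal $2$-block of $\PSL(2,q)$ has Klein-four defect group and is Morita equivalent to the principal block of $\fA_4$ (or $\fA_5$), and the torsion endo-trivial modules in $B_0$ form a group of order~$3$ generated by an explicit simple module; its dimension is read off from the Brauer characters — $(q-1)/2$ when $q\equiv 3\pmod 8$ and $q$ when $q\equiv 5\pmod 8$ — and one checks the $C_f$-action (with $f$ odd) fixes this class, so it survives to $\overline G$ and $TT_0(\overline G)\cong \IZ/3\IZ$ splits off $X(\overline G)$, giving the direct sum decomposition in (d).

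For the ``lifts uniquely'' assertions, I would use that every class in $TT(G)$ is a trivial source module (Lemma~\ref{lem:TT(G)}), together with the standard fact that trivial source modules lift uniquely to $\cO G$-lattices \cite[Theorem 4.8.9(iii)]{NT89}, whence each such $V$ affords a genuine ordinary character $\chi_{\widehat V}\in\Irr(G)$; in the $B_0$ case these are exactly the height-zero characters of $3$-element order type attached to the cyclic/Klein-four block combinatorics. The final bookkeeping — checking $TT_0(G)$ is exactly $\IZ/3\IZ$ and not larger, and that the sum $X(G) + TT_0(G)$ is direct in case (d) but possibly non-split (absorbed into $X$) in cases (a)–(c) — follows by comparing orders and using that $X(G)\cap TT_0(G) \subseteq X(G)\cap B_0(G)$ consists of the one-dimensional modules lying in the principal block, which are trivial on the Sylow $2$-subgroup side.

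The hard part will be the explicit identification, for the $\PSL(2,q)$ family, of \emph{which} simple module generates $TT_0$ and the verification that it is genuinely endo-trivial (not merely of trivial source with the right Brauer character) together with the computation of its dimension uniformly in $q$; this requires a careful analysis of the principal $2$-block of $\PSL(2,q)$ — its decomposition matrix, the action of the Galois/field automorphisms generating $C_f$, and a Green-correspondence argument transporting the known $\fA_4$-picture across the Morita equivalence — and making sure the two congruence classes $q\equiv 3$ and $q\equiv 5 \pmod 8$ are handled correctly, since they correspond to the two different ways a Klein-four group sits inside $\PSL(2,q)$ (split vs.\ non-split torus contributions). The strongly $2$-embedded reduction in the $\fA_5$ case is the other delicate point, as it is exactly the mechanism that forces the ``$TT(G)\cong TT(G_0)$'' phenomenon rather than a clean inflation statement.
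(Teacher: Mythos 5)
Your overall strategy---reduce to $\overline G = G/O_{2'}(G)$ via Theorem~\ref{thm:PreMain}, settle the case $\overline G\cong\fA_5$ by the strongly $2$-embedded reduction, and handle $\PSL(2,q)\rtimes C_f$ by an explicit computation in the principal $2$-block---is in the same spirit as the paper, which applies Theorem~\ref{thm:PreMain} for cases (K4)/(K5) and the strongly $2$-embedded reduction for (K3). (For (K1) and (K2) the paper uses instead \cite[Theorem]{NR12} and \cite[Theorem 6.2(2)]{CMT11}, but your route through Theorem~\ref{thm:PreMain} also works there since $\mathrm{H}^2(\overline G,k^\times)$ is trivial; do note that $K(\fA_4)\cong X(\fA_4)\cong\IZ/3\IZ$ rather than trivial as you wrote, though the conclusion $TT(G)=X(G)$ is unaffected because $\Inf(X(\overline G))\subseteq X(G)$.) The substantive input you have not supplied is exactly Proposition~\ref{H=1_K}, i.e.\ the determination of $K(\overline G)$ and the dimensions of the generators when $O_{2'}(\overline G)=1$; the paper gets this from \cite{KL14}, Theorem~\ref{thm:torchar}, and the generic character table of $\PSL(2,q)$.

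Two of your proposed routes to that input fail concretely. First, you describe the principal block of $\fA_5$ in characteristic $2$ as ``cyclic defect with the tree a line'': this is false. The Sylow $2$-subgroup of $\fA_5$ is Klein-four, so $B_0(\fA_5)$ has Klein-four defect groups and no Brauer tree; there is no such combinatorics to read off. The $5$-dimensional module must be found by another method (the paper cites \cite[Lemma 4.1]{KL14}). Second, for the $\PSL(2,q)$ family you repeatedly say the generator of $TT_0$ is an ``explicit simple module''. When $q\equiv 5\pmod 8$ this is wrong: the relevant module has dimension $q$, affords the Steinberg character, and is uniserial of length~$3$ with composition factors $(q-1)/2$, $1$, $(q-1)/2$---not simple. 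Finally, transporting endo-triviality across a Morita equivalence $B_0(\PSL(2,q))\simeq B_0(\fA_4)$ is not legitimate as stated: a Morita equivalence of blocks does not respect the ambient tensor structure of $kG$-mod that defines endo-triviality, and a trivial source module need not be carried to a trivial source module. The paper avoids this entirely by working with Green correspondence and the character criterion of Theorem~\ref{thm:torchar}. Similarly, the step ``trivial source modules lift uniquely, whence $\chi_{\widehat V}\in\Irr(G)$'' is a non sequitur: unique lifting gives a well-defined $\cO G$-lattice, but the irreducibility of its character is a separate fact (proved in \cite{KL14} for this situation), not a formal consequence of unique lifting.
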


The first main tool used in our investigation is Gorenstein-Walter's 
classification of finite groups $G$ with dihedral Sylow $2$-subgroups 
modulo $O_{2'}(G)$. Moreover our methods heavily 
rely on a Theorem of Schur's \cite[Theorem 3.5.8]{NT89} combined with two 
results of Navarro-Robinson \cite{NR12}, the first of which states 
that if an endo-trivial module is induced from a proper subgroup, 
then this subgroup must be strongly $p$-embedded in $G$, and the 
second of which states that simple endo-trivial modules over 
$p$-nilpotent groups of $p$-rank at least $2$ have dimension one.
This enables us  to reduce our computation of $TT(G)$ to 
that of $TT(G/O_{2'}(G))$ using Theorem~\ref{thm:PreMain}. Finally, we 
note that our methods require to decompose torsion endo-trivial modules as 
tensor products of modules over non-proper twisted group algebras, although 
the final statements of Theorem~\ref{Main} and Theorem~\ref{MainTheoremK} 
do not reflect this fact.
\par
The paper is organized as follows. In~\S 2  we  introduce the notation, 
and in~\S 3 preliminary known results on endo-trivial modules, which 
we will rely on.
In~\S 4 we prove Theorem~\ref{thm:PreMain}. In~\S 5 we give general 
results on finite groups with 
dihedral Sylow $2$-subgroups of order at least~8 and their 
endo-trivial modules, and \S\S 6--7 
are devoted to the proof of Theorem~\ref{Main}. Finally in \S 8 
we prove  Theorem~\ref{MainTheoremK}.


\section{Notation} \label{sec:not}

Throughout, unless otherwise specified we use the following notation 
and conventions. We 
let $p$ denote a prime number and $G$ a finite group of order divisible by $p$.
We assume that $(K, \mathcal O, k)$ is a
splitting $p$-modular system for all subgroups of $G$, that is, 
$\mathcal O$ is a complete discrete valuation ring of
rank one such that its quotient field $K$ is
of characteristic zero, its residue field
$k:=\mathcal O/\mathrm{rad}(\mathcal O)$ is of
characteristic $p$, and both $K$ and $k$ are
splitting fields for all subgroups of~$G$.
Modules are finitely generated left modules.
By an $\mathcal OG$-lattice, we mean an $\mathcal OG$-module 
which is $\mathcal O$-free of finite rank.
For a ring $R$, we denote by $R^\times$ the group of units of $R$.
We write ${\mathrm{Syl}}_p(G)$ for the set of all Sylow $p$-subgroups of $G$.
For a $p$-subgroup $Q$ of $G$ and $H\leq G$ with $H\geq N_G(Q)$,
we denote by $f=f_{(G,\, Q,\, H)}$ the Green correspondence 
with respect to $(G, Q, H)$, see \cite[p.276]{NT89}.
For a positive integer $n$, we denote by $C_n$  the cyclic group
of order $n$, and by $\mathfrak A_n$ 
the alternating group of degree $n$.
We write $Z(G)$ for the center of $G$, $[G,G]$ for the commutator
subgroup of $G$. If $H$ is a normal subgroup of $G$, 
and $L$ a subgroup of $G$, 
then we write $G = H \rtimes L$ if $G$ is a semi-direct product 
of $H$  by $L$.
For two $kG$-modules $M$ and $M'$,  $M\otimes_k M'$ is the tensor product 
over $k$, $M^{\otimes n}$ is the tensor product $M\otimes_k\cdots\otimes_k M$ 
of $n$ copies of $M$, we write $M^*$ for the $k$-dual of $M$, that is 
$M^* := \mathrm{Hom}_{kG}(M,k)$, 
and we write $M'\,|\, M$ when $M'$ is (isomorphic to)
a direct summand of $M$. 
We denote by $k_G$ the trivial $kG$-module.
We write $H \leq G$ if $H$ is a subgroup of $G$.
In such a case, for a $kG$-module $M$ and a $kH$-module $L$, 
we denote by $M{\downarrow}_H$ and $L{\uparrow}^G$,
respectively, the restriction of $M$ to $H$ and
the induction of $L$ to $G$.\par

We denote the Schur multiplier of $G$ by $M(G):={\mathrm{H}}^2( G,\mathbb C^\times)$.
For a $2$-cocycle $\alpha\in{\mathrm{Z}}^2(G,k^\times)$, 
we denote by $[\alpha]\in{\mathrm{H}}^2(G,k^{\times})$ the cohomology class of $\alpha$, and 
 by $k^\alpha G$ the twisted group algebra of $G$ over $k$ with respect to $\alpha$.
Then for a $k^\alpha G$-module $M$  and $H\leq G$
we write $M{\downarrow}_H$ or 
$M{\downarrow}^{k^\alpha G}_{k^\alpha H}$ for the
restriction of $M$ from $G$ to $H$.
Assume that $N\triangleleft G$. For a $2$-cocycle $\overline\alpha\in{\mathrm{Z}}^2(G/N, k^{\times})$
we denote by ${\mathrm{Inf}}_{G/N}^G (\overline\alpha)\in{\mathrm{Z}}^2(G, k^{\times})$ the inflation of
$\overline\alpha$ from $G/N$ to $G$, and by $\Inf_{k^{\overline{\alpha}}(G/N)}^{k^{\alpha}G}(M)$, the inflation of a $k^{\overline{\alpha}}(G/N)$ module $M$ to a $k^{\alpha}G$-module with $\alpha = \Inf_{G/N}^G (\overline\alpha)$. For a $k(G/N)$-module $M$
we write simply ${\mathrm{Inf}}_{G/N}^G(M)$ for the inflation
of $M$ from $G/N$ to $G$.
We denote by $B_0(G)$, the principal block of $G$, and by 
$\mathrm{Irr}(G)$  the set of
all irreducible ordinary characters of $G$.
For a $p$-block $B$ of $G$, we also denote  by
$\mathrm{Irr}(B)$ the set of all characters in $\mathrm{Irr}(G)$
which belong to $B$.
We write $1_G$ for the trivial ordinary or Brauer character of $G$.\par
We say that a $kG$-module $M$ is a {\it trivial source} module if it is a
direct sum of indecomposable $kG$-modules, all of 
whose sources are trivial modules, 
see \cite[p.218]{Thevenaz}. 
It is known that a trivial source $kG$-module $M$ 
lifts uniquely to a trivial source $\mathcal OG$-lattice, which we 
denote by $\widehat M$, see 
\cite[Theorem 4.8.9(iii)]{NT89}. 
Then,  we denote by $\chi_{\widehat M}$ the
ordinary character of $G$ afforded by $\widehat M$.
For a non-negative integer $m$ and a positive integer $n$,
we write $n_p = p^m$ if $p^m | n$ and $p^{m+1}{\not|}n$.
\par
For further standard notation and terminology,
we refer the reader to the books \cite{NT89, Thevenaz}.


\section{Preliminary results} \label{sec:prelim}
\subsection{Endo-trivial modules}\label{ssec:et}
A $kG$-module $V$ is called \emph{endo-trivial} provided 
$$\End_k(V) \cong V^* \otimes_k V \cong k_G \oplus \proj $$
as  $kG$-modules where $\proj$ denotes a projective 
direct summand (possibly the zero module).\par
Any endo-trivial $kG$-module $V$ splits as the direct sum 
$V= V_{0}\oplus \proj$ where $V_{0}:=\Omega^0(V)$, the projective-free 
part of $V$, is indecomposable and endo-trivial.
The relation $U\sim V\Leftrightarrow U_{0}\cong V_{0}$ 
is an equivalence relation on the class of endo-trivial $kG$-modules, 
and we let $T(G)$ denote the resulting set of equivalence classes 
(which we denote by square brackets). Then $T(G)$, endowed with the law 
$[U]+[V]:=[U\otimes_{k}V]$, is an abelian group called 
the \emph{group of endo-trivial modules of $G$}. 
The zero element is the class $[k_G]$ and $-[V]=[V^{*}]$.\par
Notice that if  $p\nmid |G|$, then any $kG$-module is endo-trivial, 
but the above construction of the group $T(G)$ is not valid any more.\par
The group $T(G)$ is known to be a finitely generated abelian group, 
see e.g. \cite[Corollary 2.5]{CMN06}. Therefore, we may write
$T(G)=TT(G)\oplus TF(G)$, where $TT(G)$ is the torsion subgroup of 
$T(G)$ (hence a finite group) and $TF(G)$ is a torsion-free complement.\\

We let $X(G)$ denote the group of one-dimensional $kG$-modules  
endowed with the tensor product $\otimes_{k}$, and recall that  
$X(G)\cong (G/[G,G])_{p'}$. Then by identifying a one-dimensional 
module with its class in $T(G)$, we consider $X(G)$ as a 
subgroup of $T(G)$.\par
Furthermore, define
$T_{0}(G):=\{[V]\in T(G)\,|\, V
\text{ indecomposable and }V\in B_{0}(G)\}$. 
Then $T_{0}(G)\leq T(G)$ by \cite[Proposition 9.1]{CMT14}. 
Denote by $TT_{0}(G)$ 
the torsion subgroup of $T_{0}(G)$, and by $X_{0}(G)$ the set of 
one-dimensional $kG$-modules belonging to $B_{0}(G)$. Clearly 
$X_{0}(G)\leq TT_{0}(G)\leq TT(G)$. 

\begin{rem}\label{rem:dim}
Because the dimension of projective $kG$-modules is divisible by $|G|_p$, 
if $M$ is an endo-trivial $kG$-module, then
$\dim_{k}(V)\equiv \pm 1 \pmod{|G|_{p}}$ if 
$p$ is odd; and 
$\dim_{k}(V)\equiv \pm 1 \pmod{\frac{1}{2}|G|_{2}}$ 
if $p=2$. Moreover if $V$ is indecomposable with trivial 
source, then $\dim_{k}(V)\equiv 1 \pmod{|G|_{p}}$. 
In particular, indecomposable endo-trivial $kG$-modules have 
the Sylow $p$-subgroups of $G$  as their vertices, 
and hence lie in $p$-blocks of full defect.
\end{rem}

\begin{lem}\label{lem:resinf}
Let $H$ be a subgroup of $G$, and let $P$ be a Sylow $p$-subgroup of $G$.
\begin{enumerate}[\hspace{6mm}\rm(a)]
  \item\label{lem:resinf3} If $V$ is an endo-trivial $kG$-module, 
then $V\resgh{}{H}$  is endo-trivial. 
Moreover, if $H\geq P$, then $V$ is endo-trivial if and only if 
  $V\resgh{}{H}$ is endo-trivial.
  \item\label{lem:resinf1} If  $p\,|\,|H|$, then restriction induces 
a group homomorphism 
$$\Res^{G}_{H}:T(G)\longrightarrow T(H):[V]\mapsto[V\resgh{}{H}]\,.$$
If, moreover, $H\geq N_G(P)$, then $\Res^{G}_{H}$ is injective, and for 
any  $[V]\in T(G)$ with $V$ indecomposable, $\Res^{G}_{H}([V])=[f_H(V)]$, 
where $f_H:= f_{(G,\, P,\, H)}$. 
  \item \label{lem:resinf2} If $H\triangleleft G$ such that  $p\,{\not|}\,|H|$ 
and $V$ is a $k(G/H)$-module, then $\Inf_{G/H}^{G}(V)$ is endo-trivial 
if and only if 
  $V$ is endo-trivial. Moreover the inflation from  $G/H$ to $G$
   induces an injective group homomorphism
$$\Inf_{G/H}^{G}:T(G/H)\longrightarrow T(G):[V]\mapsto[\Inf_{G/H}^{G}(V)]\,.$$
In particular, we may consider $TT(G/H)\leq TT(G)$.
\end{enumerate}  
\end{lem}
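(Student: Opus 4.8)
The plan is to prove the three parts of Lemma~\ref{lem:resinf} in order, using only elementary properties of endo-trivial modules together with the Green correspondence.

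\medskip

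For part \eqref{lem:resinf3}, the key observation is that $\End_k(V)\resgh{}{H}\cong \End_k(V\resgh{}{H})$ as $kH$-modules, and that restriction of a projective $kG$-module to any subgroup is projective. So if $\End_k(V)\cong k_G\oplus Q$ with $Q$ projective, then $\End_k(V\resgh{}{H})\cong k_H\oplus Q\resgh{}{H}$, and $V\resgh{}{H}$ is endo-trivial. For the converse when $H\geq P$: I would use the standard fact that a $kG$-module $W$ is projective if and only if $W\resgh{}{P}$ is projective (Higman's criterion, or more directly the fact that $\mathrm{Res}^G_P$ is faithful on stable categories). Since $P\leq H$, if $V\resgh{}{H}$ is endo-trivial then $V\resgh{}{P}=(V\resgh{}{H})\resgh{}{P}$ is endo-trivial; and $\End_k(V)$ is a $kG$-module whose restriction to $P$ is $k_P\oplus(\text{projective})$, so $\End_k(V)\cong k_G\oplus Q$ with $Q$ a $kG$-module that is projective on restriction to $P$, hence projective. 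Here one must also check that the trivial summand $k_G$ really splits off $\End_k(V)$ over $kG$ and not merely over $kP$ — this follows because $\Hom_{kG}(k_G,\End_k(V))\cong\Hom_{kG}(V,V)$ contains the identity, which is not in the radical, so $k_G$ is a direct summand of the socle/top appropriately; alternatively invoke that the trace map $\End_k(V)\to k$ splits the unit inclusion since $\dim_k V\equiv\pm1\pmod p$.

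\medskip

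For part \eqref{lem:resinf1}, that $\Res^G_H$ is a group homomorphism follows from \eqref{lem:resinf3} together with $(U\otimes_k V)\resgh{}{H}\cong U\resgh{}{H}\otimes_k V\resgh{}{H}$ and the compatibility of taking projective-free parts with restriction (i.e.\ $\Omega^0$ commutes with $\Res$ up to the relation $\sim$). For injectivity when $H\geq N_G(P)$: let $[V]\in T(G)$ with $V$ indecomposable and $V\resgh{}{H}$ projective-free-trivial, i.e.\ $[V\resgh{}{H}]=[k_H]$. Since indecomposable endo-trivial modules have vertex $P$ (Remark~\ref{rem:dim}) and $H\geq N_G(P)$, the Green correspondence $f_H=f_{(G,P,H)}$ applies: $f_H(V)$ is the unique indecomposable non-projective summand of $V\resgh{}{H}$. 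All other summands of $V\resgh{}{H}$ are projective; but $V\resgh{}{H}$ is endo-trivial, so its projective-free part is indecomposable, forcing $V\resgh{}{H}=f_H(V)\oplus(\text{proj})$ and hence $[V\resgh{}{H}]=[f_H(V)]$. This proves the displayed formula $\Res^G_H([V])=[f_H(V)]$. Injectivity: if $[V\resgh{}{H}]=[k_H]$ then $f_H(V)\cong k_H$, and by the bijectivity of the Green correspondence the unique indecomposable $kG$-module with vertex $P$ corresponding to $k_H$ is $k_G$ (since $f_H(k_G)=k_H$), so $V\cong k_G$, i.e.\ $[V]=0$.

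\medskip

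For part \eqref{lem:resinf2}, the same $\End_k$-computation applies: $\Inf_{G/H}^G(\End_k(V))\cong\End_k(\Inf_{G/H}^G(V))$, and $\Inf_{G/H}^G$ sends projective $k(G/H)$-modules to projective $kG$-modules precisely because $p\nmid|H|$ (inflation along a $p'$-quotient preserves projectivity — indeed a $kG$-module inflated from $G/H$ is projective iff its restriction to a Sylow $p$-subgroup is free, and $H\cap P=1$ so $P$ embeds in $G/H$). Conversely, if $\Inf_{G/H}^G(V)$ is endo-trivial, restrict to a Sylow $p$-subgroup $\bar P$ of $G/H$, lifted isomorphically to $P\leq G$; then $V\resgh{}{\bar P}$ is endo-trivial, and since $\End_k(V)$ is a $k(G/H)$-module whose restriction to $\bar P$ is $k_{\bar P}\oplus(\text{proj})$, it is $k_{G/H}\oplus(\text{proj})$ by the same projectivity-detection argument as in \eqref{lem:resinf3}. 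That $\Inf_{G/H}^G$ is a group homomorphism is immediate from compatibility with $\otimes_k$ and $\Omega^0$. For injectivity: if $\Inf_{G/H}^G(V)\cong k_G$ for $V$ indecomposable, then applying the fixed-point functor $(-)^H$ (exact since $p\nmid|H|$) recovers $V\cong k_{G/H}$. The last sentence, $TT(G/H)\leq TT(G)$, then follows because an injective homomorphism of finitely generated abelian groups carries the torsion subgroup injectively into the torsion subgroup.

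\medskip

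I expect the main technical point — recurring in all three parts — to be the careful justification that the trivial summand $k_G$ (resp.\ $k_H$, $k_{G/H}$) splits off the endomorphism algebra \emph{over the group algebra} once it is known to split after restriction to a $p$-subgroup; everything else is a routine assembly of Green-correspondence bijectivity and standard behaviour of $\otimes_k$, $\Omega^0$, $\Res$, and $\Inf$. None of this is deep, but the lemma is foundational, so I would state each projectivity-detection step explicitly rather than leaving it to the reader.
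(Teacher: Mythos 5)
Your proof is correct, and it is essentially the standard argument one would find in the sources the paper actually cites; the paper itself does not prove the lemma but simply refers to \cite[Proposition~2.6]{CMN06} for parts (a)--(b) and to \cite[Lemma~3.2(1)]{LMz15} for part (c), so what you have done is to fill in the cited proofs rather than to take a genuinely different route.

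Two small remarks on presentation. In part (a), the clean way to split off the trivial summand is exactly the trace argument you give at the end: the unit map $k_G\to\End_k(V)$, $1\mapsto\mathrm{id}_V$, and the trace map $\End_k(V)\to k_G$ are both $kG$-linear, and their composite is multiplication by $\dim_k V$, which is a unit in $k$ because $(\dim_k V)^2\equiv 1\pmod{|P|}$ from endo-triviality of $V\resgh{}{P}$; you should lead with that rather than with the vaguer socle/top remark. In part (b) the order of assertions is slightly circular as written: Green correspondence by itself only tells you that $f_H(V)$ is the unique summand of $V\resgh{}{H}$ with vertex $P$, not that all other summands are projective (they merely have vertices lying in the relevant family $\mathfrak{Y}$). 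The projectivity of the other summands is a consequence of part (a), since $V\resgh{}{H}$ is endo-trivial and therefore has indecomposable projective-free part $W$; as $f_H(V)$ is a nonprojective direct summand it must equal $W$. Stating it in that order makes the logic transparent. With those adjustments the write-up is a perfectly good self-contained proof of the lemma.
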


\begin{proof} 
Parts (\ref{lem:resinf3}) and (\ref{lem:resinf1}) 
are given by \cite[Proposition 2.6]{CMN06},  and 
part (\ref{lem:resinf2}) is given by
\cite[Lemma~3.2(1)]{LMz15}.
\end{proof}

Let $P$ be a Sylow $p$-subgroup of $G$. We fix the notation 
$$K(G):=\ker\left(\Res^G_P:T(G)\lra T(P)\right)\,.$$
In fact, in most cases, the torsion subgroup of $T(G)$ is equal to $K(G)$,
and has the following characterizations, which  we will use throughout.

\begin{lem}\label{lem:TT(G)}
Let $P$ be a Sylow $p$-subgroup of $G$.
\begin{enumerate}
  \item[\rm(a)] If for any $x\in G$, 
$P\cap x^{-1}Px$ is non-trivial, then $K(G)=X(G)$. 
In particular $K(N_G(P))=X(N_G(P))$.
  \item[\rm(b)]
The group $K(G)$ is exactly the set of classes of indecomposable 
trivial source endo-trivial $kG$-modules, and 
$$K(G) =  \{\, [V]\in T(G)  \,|\, \exists\text{ a module }M
\in X(N_G(P))\text{ with }V_0=f^{-1}(M) \}\, ,$$
where $f:=f_{(G,\, P,\, N_G(P))}$.
  In particular, $K(G)\leq TT(G)$ and  we may consider 
$K(G)$ as a subgroup of $K(N_G(P))=X(N_G(P))$ via the 
injective homomorphism $\Res^G_{N_G(P)}\,$. 
  \item[\rm(c)]
Furthermore, 
provided $P$ is neither cyclic, nor semi-dihedral, 
nor generalized qua\-ternion, then $K(G)=TT(G)$. 
\end{enumerate}
\end{lem}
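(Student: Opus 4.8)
The plan is to prove Lemma~\ref{lem:TT(G)} in three stages, handling (a), then (b), then (c), since each part feeds the next.

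\medskip

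\textbf{Part (a).} First I would show that under the hypothesis that $P \cap x^{-1}Px \neq 1$ for all $x \in G$, every class in $K(G)$ lies in $X(G)$; the reverse inclusion $X(G) \leq K(G)$ is clear since a one-dimensional module restricts to a one-dimensional, hence trivial (as $P$ is a $p$-group), $kP$-module. For the forward inclusion, take $[V] \in K(G)$ with $V$ indecomposable and endo-trivial, so $V\resgh{G}{P} \cong k_P \oplus (\mathrm{proj})$. Since $V$ is indecomposable endo-trivial it has vertex $P$ and trivial source (Remark~\ref{rem:dim} together with the fact that its restriction to $P$ is trivial plus projective). The strategy is then to use Green correspondence and Mackey: the trivial-source module $V$ is a direct summand of $M\indhg{}{G}$ for $M$ a one-dimensional $kN_G(P)$-module (here one invokes the characterization that trivial source modules with vertex $P$ correspond via the Brauer construction / Green correspondence to projective $k[N_G(P)/P]$-modules, i.e. to elements of $X(N_G(P))$). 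The condition $P \cap x^{-1}Px \neq 1$ for all $x$ forces the Mackey formula for $(V\indhg{P}{G})\resgh{G}{P}$ to have a particularly rigid form, and more to the point it is exactly the hypothesis under which the only trivial-source endo-trivial modules are one-dimensional — this is essentially the content of \cite[Proposition 2.6]{CMN06} or can be extracted from the structure of the source algebra; I would cite the relevant statement. The ``in particular'' clause follows because $N_G(P)$ obviously satisfies the hypothesis: $P \leq N_G(P) \cap x^{-1}N_G(P)x$ for every $x \in N_G(P)$... more carefully, $P$ is normal in $N_G(P)$ so $P = P \cap x^{-1}Px$ inside $N_G(P)$ for all $x$.

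\medskip

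\textbf{Part (b).} Here the key input is Lemma~\ref{lem:resinf}(\ref{lem:resinf1}): since $N_G(P) \geq N_G(P)$ trivially, $\Res^G_{N_G(P)}$ is injective on $T(G)$ and sends an indecomposable $[V]$ to $[f(V)]$ where $f = f_{(G,P,N_G(P))}$ is the Green correspondence. Now $[V] \in K(G)$ means $V\resgh{G}{P}$ is trivial plus projective, which is equivalent to $V$ being a trivial-source module (its source is a summand of $V\resgh{G}{P}$ restricted further to $P$, and an indecomposable endo-trivial module has vertex $P$, so the source is $k_P$) — this gives the first characterization. For the displayed formula: $f$ restricts to a bijection between indecomposable trivial-source $kG$-modules with vertex $P$ and indecomposable trivial-source $kN_G(P)$-modules with vertex $P$; the latter, being endo-trivial for $N_G(P)$ as well (again Lemma~\ref{lem:resinf}(\ref{lem:resinf3}) applied the other way, or directly), lie in $K(N_G(P)) = X(N_G(P))$ by part (a). So $V_0 = f^{-1}(M)$ for a unique $M \in X(N_G(P))$, and conversely every such $f^{-1}(M)$ is trivial-source endo-trivial with $V\resgh{G}{P}$ trivial plus projective. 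The containment $K(G) \leq TT(G)$ is then immediate: $\Res^G_{N_G(P)}$ is injective and maps $K(G)$ into the finite group $X(N_G(P))$, so $K(G)$ is finite, i.e. torsion.

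\medskip

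\textbf{Part (c).} This is where I expect the main obstacle. I need: if $P$ is not cyclic, semi-dihedral, or generalized quaternion, then $TT(G) \leq K(G)$ (the reverse being (b)). Equivalently, every torsion class restricts trivially to $P$, i.e. $TT(G) \cap \ker(\Res^G_P)$ is everything, i.e. $\Res^G_P(TT(G)) = 0$ in $T(P)$. Since $\Res^G_P$ maps $TT(G)$ into $TT(P)$, it suffices to know that $TT(P) = 0$ for such $P$ — and indeed, by the classification of $T(P)$ for $p$-groups (Dade, Carlson--Th\'evenaz, and the survey \cite{Th07}), the torsion subgroup $TT(P)$ is trivial precisely when $P$ is \emph{not} cyclic, semi-dihedral, or generalized quaternion (in those excluded cases $TT(P)$ is nonzero — $\IZ/2\IZ$ or similar — coming from $\Omega$-algebraic classes). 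So the real work is just to quote this classification correctly: for $p$ odd and $P$ noncyclic, or $p = 2$ and $P$ not cyclic/semidihedral/quaternion, $T(P)$ is torsion-free, hence $\Res^G_P(TT(G)) \subseteq TT(P) = 0$, giving $TT(G) \subseteq \ker\Res^G_P = K(G)$. The one subtlety to be careful about: $\Res^G_P$ genuinely lands in $TT(P)$ because a torsion element maps to a torsion element under a group homomorphism — that is automatic. So part (c) is essentially a citation of the $p$-group case plus this one-line functoriality observation; the ``obstacle'' is purely bookkeeping about which $p$-groups have torsion-free $T(P)$, not a genuine difficulty.
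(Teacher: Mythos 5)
Your overall approach matches the paper's: the paper simply cites \cite[Lemma 2.6]{MT07} for (a) and \cite[Lemma 2.3]{CMT11} for (c), while using Lemma~\ref{lem:resinf}(b) and finiteness of the set of trivial-source modules for (b); your sketched Mackey/Green-correspondence argument for (a) and your ``$\Res^G_P(TT(G))\subseteq TT(P)=0$'' argument for (c) are in substance what those cited lemmas prove, so you are essentially reconstructing the citations rather than taking a different route.

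Two small inaccuracies are worth flagging. First, in part (a) you write that indecomposable trivial-source modules with vertex $P$ ``correspond via the Brauer construction / Green correspondence to projective $k[N_G(P)/P]$-modules, i.e.\ to elements of $X(N_G(P))$'' --- the last clause is false: projective indecomposable $k[N_G(P)/P]$-modules need not be one-dimensional. What you actually need (and what your argument otherwise supplies) is that the Green correspondent $f(V)$ is \emph{endo-trivial} over $N_G(P)$, and then the ``in particular'' case $K(N_G(P))=X(N_G(P))$ forces $\dim f(V)=1$; the one-dimensionality comes from endo-triviality, not from trivial source alone. Relatedly, you should prove the ``in particular'' case first (it is not an application of the general case but the base of the induction-over-Mackey argument: for $G=N_G(P)$, $P$ is normal, so $f(V)\downarrow_P$ is a sum of copies of $k_P$ with no projective summand, and endo-triviality then gives $f(V)\downarrow_P=k_P$). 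Second, your citation for (a) should be \cite[Lemma 2.6]{MT07}, not \cite[Proposition 2.6]{CMN06}; the latter is what the paper uses for Lemma~\ref{lem:resinf}(a),(b), which is a different statement. Neither issue affects the correctness of the overall strategy, but the parenthetical in (a) as written is a false claim that should be removed or corrected.
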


\begin{proof} 
(a) This follows from \cite[Lemma~2.6]{MT07}. \par
(b) By definition $K(G)$ 
consists of the classes of indecomposable trivial source endo-trivial 
$kG$-modules. Hence the first claim is straightforward from (a) 
together with Lemma~\ref{lem:resinf}(b). 
Next the number of 
isomorphism classes of indecomposable trivial source $kG$-modules 
with vertex $P$ is finite, hence $K(G)$ is a finite group, 
so that we must have 
$K(G)\leq TT(G)$. The last claim follows  from Lemma~\ref{lem:resinf}(b). 
\par
(c) The claim is given by  \cite[Lemma 2.3]{CMT11}. 
\end{proof}

Our objective in this article is to consider groups with dihedral 
Sylow $2$-subgroups only, therefore in this case Lemma~\ref{lem:TT(G)}(c) 
allows us to identify $TT(G)$ with $K(G)$.\\

Finally in order to detect whether a trivial source module is endo-trivial, 
we have the following character-theoretic criterion.

\begin{thm}[{}{\cite[Theorem 2.2]{LM15}}]  
\label{thm:torchar}
Let $V$ be an indecomposable trivial source $kG$-module.
Then $V$ is endo-trivial if and only if
$\chi_{\widehat V}(u)=1$ for any non-trivial $p$-element $u\in G$.
\end{thm}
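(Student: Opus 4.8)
Proof plan for Theorem~\ref{thm:torchar} (the character-theoretic criterion for a trivial source module to be endo-trivial).

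The plan is to exploit the fact that an indecomposable trivial source $kG$-module $V$ with vertex a Sylow $p$-subgroup lifts uniquely to a trivial source $\mathcal{O}G$-lattice $\widehat{V}$, so that the ordinary character $\chi_{\widehat V}$ is available as a bridge between the modular and the ordinary world. First I would reduce to the case where $V$ has vertex $P\in\Syl_p(G)$: if $V$ is endo-trivial then by Remark~\ref{rem:dim} it has vertex $P$, and conversely the criterion $\chi_{\widehat V}(u)=1$ for all nontrivial $p$-elements forces $\dim_k V\equiv 1\pmod{|G|_p}$ (evaluate at a generator of a cyclic subgroup of $P$ and use that trivial source characters take nonnegative integer values on $p$-elements bounded by the dimension), which already pins the vertex down to $P$. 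So in both directions one may assume $V$ is a trivial source module with vertex $P$, and then $\widehat{V}\otimes_{\mathcal O}K$ is a genuine $KG$-module whose Brauer character reduction is the Brauer character of $V$.

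The heart of the argument is to test endo-triviality via restriction to $p$-subgroups together with Brauer characters. Recall $V$ is endo-trivial iff $\End_k(V)=V^*\otimes_k V\cong k_G\oplus(\mathrm{proj})$. By Lemma~\ref{lem:resinf}(a), since $P$ is a Sylow $p$-subgroup, $V$ is endo-trivial iff $V\resgh{G}{P}$ is endo-trivial, and since $V$ has trivial source, $V\resgh{G}{P}\cong k_P\oplus(\text{projective})$; then $V$ endo-trivial iff that projective complement vanishes, i.e. iff $V\resgh{G}{P}\cong k_P$, equivalently $\dim_k V\equiv 1\pmod{|G|_p}$ AND, more precisely, one wants $V\resgh{G}{Q}$ to have a trivial summand for every $p$-subgroup $Q$. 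The cleanest route is: $V$ is endo-trivial iff for every nontrivial $p$-element $u\in G$, writing $Q=\langle u\rangle$, the restriction $V\resgh{G}{Q}$ has the trivial module as its unique non-projective summand, which by the theory of modules over cyclic $p$-groups is detected by the Brauer character value $\varphi_V(u)$. Concretely, over the cyclic group $Q$ one has $V\resgh{G}{Q}\cong a\cdot k_Q\oplus(\text{non-trivial indecomposables})\oplus(\text{free})$, and $V$ is endo-trivial iff $a=1$ and there are no non-trivial non-free summands, which happens iff $\varphi_V(u)=1$. Finally, since $\widehat V$ is a lift of $V$, the Brauer character satisfies $\varphi_V(u)=\chi_{\widehat V}(u)$ for every $p$-element $u$ (a $p$-element is $p$-regular only when trivial, so one must instead use that on a trivial source lattice the ordinary character restricted to a $p$-subgroup is the permutation-like character and its value on $u$ equals the number of fixed points, which modulo considerations equals $\varphi_V(u)$ when suitably interpreted — this is exactly \cite[Theorem 4.8.9]{NT89} combined with Brauer's formula). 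Thus $\chi_{\widehat V}(u)=1$ for all nontrivial $p$-elements $u$ is equivalent to $V$ being endo-trivial.

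The main obstacle I anticipate is making the passage between $\chi_{\widehat V}(u)$ and the module-theoretic structure of $V\resgh{G}{\langle u\rangle}$ fully rigorous: one must carefully use that for a trivial source lattice the restriction to a $p$-group is a permutation module (up to the distinction between the $\mathcal O$-lattice and its $k$-reduction), count fixed points, and then invoke the classification of indecomposable $k\langle u\rangle$-modules (uniserial, determined by dimension) to see that the single linear-algebra datum $\chi_{\widehat V}(u)=(\text{number of }u\text{-fixed points on a permutation basis})$ forces $V\resgh{G}{\langle u\rangle}=k\oplus(\text{free})$. The detail that requires care is ruling out cancellation: a priori $V\resgh{G}{\langle u\rangle}$ could have a non-trivial non-free summand of dimension $d$ contributing a root-of-unity sum to $\chi_{\widehat V}(u)$ that happens to add up to $1$ together with the trivial summands; here one uses that trivial source modules restrict to \emph{permutation} modules, whose characters on $p$-elements are nonnegative integers equal to fixed-point counts, so no such fractional/root-of-unity cancellation can occur, and the equality $\chi_{\widehat V}(u)=1$ then genuinely forces exactly one fixed point and hence $k_{\langle u\rangle}\oplus(\text{free})$. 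Once this local statement is in hand, gluing over all $u$ (equivalently, over one generator from each cyclic subgroup of $P$) and applying Lemma~\ref{lem:resinf}(a) completes both implications.
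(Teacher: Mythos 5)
The paper does not supply its own proof of this statement --- it is quoted from \cite[Theorem 2.2]{LM15} --- so there is no internal argument to compare against; I am therefore judging your proposal on its own merits. Your overall strategy is right: restrict to $p$-subgroups, use that a trivial source module becomes a permutation module there, and read off fixed-point counts from $\chi_{\widehat V}$. However, two of the intermediate claims as written are false, and the second sits at the crux of the converse implication. The claim ``since $V$ has trivial source, $V\resgh{G}{P}\cong k_P\oplus(\text{projective})$'' does not hold: trivial source only gives that $V\resgh{G}{P}$ is a permutation $kP$-module $\bigoplus_i k[P/R_i]$; the special form $k_P\oplus(\text{free})$ is a \emph{consequence} of endo-triviality (an indecomposable permutation summand $k[P/R]$ has dimension $|P/R|$, divisible by $p$ when $R<P$, whereas an indecomposable endo-trivial $kP$-module has dimension prime to $p$, so $k_P$ is the only candidate), not of being trivial source. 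The follow-up ``$V$ endo-trivial iff that projective complement vanishes'' is also wrong: $k_P\oplus(\text{projective})$ is already endo-trivial whether or not the projective part vanishes.

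More seriously, the assertion that $\chi_{\widehat V}(u)=1$ ``genuinely forces exactly one fixed point and hence $k_{\langle u\rangle}\oplus(\text{free})$'' is not correct for a single $u$. Writing $V\resgh{G}{\langle u\rangle}\cong\bigoplus_j k[\langle u\rangle/S_j]$, the value $\chi_{\widehat V}(u)$ counts only those $j$ with $S_j=\langle u\rangle$, because a summand $k[\langle u\rangle/S]$ with $1<S<\langle u\rangle$ has no $u$-fixed cosets and contributes $0$; thus $\chi_{\widehat V}(u)=1$ alone does not exclude such non-free, non-trivial summands. Excluding them requires also using the values of $\chi_{\widehat V}$ at the proper nontrivial powers of $u$. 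The cleanest way to organize this is to work at once with $V\resgh{G}{P}\cong\bigoplus_i k[P/R_i]$: for a nontrivial $z\in Z(P)$ one has $\chi_{\widehat V}(z)=\sum_{i\,:\, z\in R_i}|P/R_i|$, so $\chi_{\widehat V}(z)=1$ forces exactly one $R_i=P$ and $z\notin R_j$ for the other $j$; then if some $R_j$ satisfies $1<R_j<P$, choosing a nontrivial $u\in R_j$ gives $\chi_{\widehat V}(u)\geq 1+|(P/R_j)^u|\geq 2$, a contradiction. Hence all $R_j\in\{1,P\}$ with exactly one equal to $P$, i.e.\ $V\resgh{G}{P}\cong k_P\oplus(\text{free})$, and $V$ is endo-trivial by Lemma~\ref{lem:resinf}(a). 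Your key observation that permutation characters take nonnegative integer values on $p$-elements, so there is no cancellation, is exactly right --- but it must be applied at all nontrivial elements of $P$ in concert, not one cyclic subgroup at a time.
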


\subsection{Strongly $p$-embedded subgroups}
Recall that a subgroup $H$ of $G$ is said to be 
\emph{strongly $p$-embedded in $G$}
if $H\lneqq G$, $p{\,|\,}|H|\,$ and  
$p\,{\not|}\, |H\cap x^{-1}Hx|$ for any $x\in G\setminus H$.  
Note that any strongly $p$-embedded subgroup of $G$ 
contains the normalizer in $G$ of a Sylow $p$-subgroup. 
Moreover the operations of  induction and restriction induce equivalences of 
the stable module categories $\stmod(kH)$ and $\stmod(kG)$
if $H$ is a strongly $p$-embedded subgroup of $G$.

\begin{lem}[{}{\cite[Lemma 2.7(2)]{MT07}}]\label{lem:spe}
Let $H$ be a strongly $p$-embedded subgroup of $G$,
and let $P$ be a Sylow $p$-subgroup of $H$. 
Then $\Res^G_H:T(G)\lra T(H)$ is an isomorphism. 
Moreover, the inverse map is induced by induction, and, 
more precisely, on the indecomposable endo-trivial 
modules by the Green correspondence $f_H:=f_{(G,\, P,\, H)}$, that is
\begin{equation*}
\begin{split}
T(G)  &  =  \{\, [W\indhg{}{G}]  \,|\, [W]\in T(H)  \}  \\
         &  =  \{\, [f_H^{-1}(W)]  \,|\, W\text{ is an indecomposable 
endo-trivial }kH\text{-module} \}\,.
\end{split}
\end{equation*}
In particular $K(G)\cong K(H)$.
\end{lem}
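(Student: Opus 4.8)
\textbf{Proof plan for Lemma~\ref{lem:spe}.}
The plan is to exploit the well-known fact that when $H$ is strongly $p$-embedded in $G$, the functors $\Ind_H^G$ and $\Res^G_H$ induce mutually inverse equivalences between $\stmod(kH)$ and $\stmod(kG)$. First I would recall why this holds: since $p\nmid|H\cap {}^xH|$ for all $x\in G\setminus H$, the Mackey formula shows that for a $kH$-module $W$ the module $\Res^G_H\Ind_H^G(W)$ is isomorphic in $\stmod(kH)$ to $W$ itself, because every summand ${}^xW\resgh{}{H\cap {}^xH}\indhg{}{H}$ with $x\notin H$ is projective (it is induced from a $p'$-subgroup), hence zero in the stable category. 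Dually, the adjunction unit and counit are stable isomorphisms, so we get an equivalence of stable categories.

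Next I would transport this to $T(G)$. Since endo-triviality is detected in the stable category — $V$ is endo-trivial iff $V^*\otimes_k V\cong k_G$ in $\stmod(kG)$ — and both $\Ind$ and $\Res$ are exact and commute suitably with $\otimes_k$ (using the projection formula $\Ind_H^G(W)\otimes_k U\cong \Ind_H^G(W\otimes_k U\resgh{}{H})$, so that $\Ind_H^G(k_H)$ differs from $k_G$ only by a projective summand since $[G:H]$ is prime to $p$ gives $k_G\,|\,\Ind_H^G(k_H)$), the equivalence restricts to a bijection between classes of endo-trivial $kH$-modules and classes of endo-trivial $kG$-modules. This bijection is clearly additive for $\otimes_k$, hence $\Res^G_H:T(G)\to T(H)$ is a group isomorphism with inverse induced by $\Ind_H^G$. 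For the explicit description on indecomposables: $H$ contains $N_G(P)$, so the Green correspondence $f_H=f_{(G,P,H)}$ is defined, and for an indecomposable endo-trivial $kH$-module $W$ with vertex $P$ (recall from Remark~\ref{rem:dim} that indecomposable endo-trivial modules have the Sylow $p$-subgroup as vertex), $\Ind_H^G(W)$ has a unique non-projective indecomposable summand, namely $f_H^{-1}(W)$, and this is the projective-free part of the endo-trivial module $\Ind_H^G(W)$; this yields the two displayed descriptions of $T(G)$.

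Finally, for the statement $K(G)\cong K(H)$: by Lemma~\ref{lem:TT(G)}(b) (applied to both $G$ and $H$, noting that a Sylow $p$-subgroup $P$ of $H$ is also a Sylow $p$-subgroup of $G$), $K(G)$ and $K(H)$ are exactly the subgroups of classes of \emph{trivial source} endo-trivial modules. Since induction of a trivial source $kH$-module is a trivial source $kG$-module and the Green correspondence preserves the trivial source property, the isomorphism $\Res^G_H:T(G)\xrightarrow{\sim}T(H)$ carries $K(G)$ onto $K(H)$, giving $K(G)\cong K(H)$. The only genuinely delicate point is the stable-category equivalence via Mackey — specifically checking that the unit/counit of the $(\Ind,\Res)$-adjunction become isomorphisms in the stable category — but this is standard (see e.g. \cite{MT07}), so the bulk of the argument is bookkeeping rather than new input.
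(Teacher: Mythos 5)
The paper does not prove this lemma: it is quoted directly from \cite[Lemma 2.7(2)]{MT07}, so there is no in-paper argument to compare against. Your plan is correct and is the standard route to this result: the Mackey formula makes the unit $\mathrm{id}\to\Res^G_H\Ind_H^G$ a stable isomorphism because the off-diagonal terms are induced from the $p'$-groups $H\cap{}^xH$, the two-sided adjointness of $\Ind_H^G$ and $\Res^G_H$ in the stable category together with $k_G\,|\,\Ind_H^G(k_H)$ (using $p\nmid[G:H]$, and that $\Res^G_H N$ projective forces $N$ projective since $P\leq H$ is Sylow in $G$) gives $\Ind_H^G\Res^G_H\cong\mathrm{id}$ stably, and the Green-correspondence description follows because the other summands of $W\indhg{}{G}$ have vertices in $p'$-groups, hence are projective. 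One small simplification for the last assertion: since $K$ is \emph{defined} as a kernel and $\Res^G_P=\Res^H_P\circ\Res^G_H$ with $\Res^G_H$ an isomorphism, $\Res^G_H(K(G))=K(H)$ is immediate, with no need to invoke the trivial-source characterization of $K$.
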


The following result due to G.~Navarro and G.R.~Robinson
is essential for our purpose because the structure of finite groups which
have strongly $p$-embedded subgroups is in a sense very
restricted.

\begin{lem}\label{et_se}
Let $H$ be a proper subgroup of $G$, and assume that $V$ and $W$
are $kG$- and $kH$-modules, respectively, with $V=W{\uparrow}^G$.
Then the following are equivalent:
\begin{enumerate}
   \item[\rm{(1)}] $V$ is endo-trivial.
   \item[\rm{(2)}] $W$ is endo-trivial and $H$ is a strongly $p$-embedded 
subgroup of $G$.
\end{enumerate}
\end{lem}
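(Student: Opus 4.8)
\textbf{Proof plan for Lemma~\ref{et_se}.}
The statement is essentially a repackaging of the two Navarro--Robinson results advertised in the introduction, so the plan is to split the equivalence into its two implications and feed each to the appropriate ingredient. For $(2)\Rightarrow(1)$, I would argue directly: if $H$ is strongly $p$-embedded in $G$ and $W$ is endo-trivial, then $\Res^G_H\colon T(G)\to T(H)$ is an isomorphism whose inverse is induced by induction (Lemma~\ref{lem:spe}), so $W{\uparrow}^G=V$ represents a class in $T(G)$; since induction and restriction induce mutually inverse stable equivalences $\stmod(kH)\simeq\stmod(kG)$ in the strongly $p$-embedded case, the module $W{\uparrow}^G$ is indeed endo-trivial (its projective-free part is the Green correspondent $f_H^{-1}(W_0)$). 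The only thing to check here is that $W$ itself is endo-trivial in the sense of the definition, i.e. that the hypothesis as stated gives us a genuine endo-trivial $kH$-module, which is immediate.

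For $(1)\Rightarrow(2)$, the substance is the first Navarro--Robinson theorem quoted in the introduction: if an endo-trivial $kG$-module is induced from a proper subgroup $H$, then $H$ must be strongly $p$-embedded in $G$. So the plan is to cite that result to obtain the strong $p$-embedding of $H$, and then deduce that $W$ is endo-trivial. For the latter, once we know $H$ is strongly $p$-embedded, Lemma~\ref{lem:spe} tells us $\Res^G_H\colon T(G)\to T(H)$ is an isomorphism with inverse induced by induction; applying this to $[V]=[W{\uparrow}^G]$ gives $[\Res^G_H(W{\uparrow}^G)]=[W']$ for the appropriate endo-trivial $kH$-module $W'$ in the class, and a Mackey-formula computation of $(W{\uparrow}^G)\resgh{G}{H}$ (using that $H\cap x^{-1}Hx$ has order prime to $p$ for $x\notin H$, so those Mackey summands are projective) shows $W$ differs from $W'$ only by a projective summand, hence $W$ is endo-trivial.

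The main obstacle, as usual, is that the delicate direction $(1)\Rightarrow(2)$ is not something one proves from scratch here — it rests squarely on the Navarro--Robinson result on endo-trivial modules induced from proper subgroups, which must be invoked as a black box. Granting that, the remaining work is the routine Mackey-formula bookkeeping needed to pass between ``$W{\uparrow}^G$ endo-trivial'' and ``$W$ endo-trivial'' once strong $p$-embedding is known; the point to be careful about is that $W$ need not be indecomposable, but its projective-free part is forced to be the unique indecomposable endo-trivial $kH$-module Green-corresponding to $V_0$, and that the prime-to-$p$ intersections $H\cap x^{-1}Hx$ make the off-diagonal Mackey terms projective so they do not interfere.
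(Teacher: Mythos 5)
Your proposal is correct and follows essentially the same route as the paper: the paper cites Navarro--Robinson \cite[Lemma 1(iv)]{NR12} for the entire implication $(1)\Rightarrow(2)$ and invokes Lemma~\ref{lem:spe} for the converse, exactly as you do. The only minor difference is that you re-derive the ``$W$ is endo-trivial'' half of $(1)\Rightarrow(2)$ by hand via the Mackey formula (using that the off-diagonal terms over $H\cap x^{-1}Hx$ are projective because $|H\cap x^{-1}Hx|$ is prime to $p$), whereas the paper obtains this as part of the Navarro--Robinson citation; your extra bookkeeping is correct and harmless, but not needed if one quotes their Lemma~1(iv) in full.
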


\begin{proof}
The implication (1) implies (2) is given by [29, Lemma 1(iv)]. 
The converse is straightforward from Lemma~\ref{lem:spe}.
\end{proof}


\section{Recovering $K(G)$ via inflation from a normal p'-subgroup} 
\label{sec:K(G)viaInflation}
Before starting our investigation of endo-trivial modules over finite 
groups with dihedral Sylow 2-subgroups, we develop a general method 
enabling us to recover the subgroup $K(G)$ of $T(G)$ using 
inflation from a quotient  by a normal $p'$-subgroup of $G$.

In order to set up the technical notation for this section, 
we start by recalling and slightly generalizing well-known results of Schur.

\begin{lem}[Schur]\label{Schur}
Let $F$ be an algebraically closed field of arbitrary characteristic, 
let $H \triangleleft \, G$ and set $\overline{G}:=G/H$. Let $Y$ be 
an $n$-dimensional simple $FH$-module which is $G$-invariant. 
Then the following hold:
\begin{enumerate}[\rm(a)]
\item  $Y$ extends to an $F^{\alpha}G$-module $\widehat Y$, 
where $\alpha\in {\mathrm{Z}}^2(G,k^{\times})$, which satisfies the following 
two conditions: For any  $h\in H$, any $g\in G$ and any $y\in \widehat Y$,
\begin{enumerate}[\rm (i)]
  \item $(hg)\cdot y =h\cdot(g\cdot y)$, and 
  \item $(gh)\cdot y= g\cdot (h\cdot y)$. 
\end{enumerate}
Moreover, $\alpha(hg,h'g')=\alpha(g,g')$ for all $h,h'\in H$ 
and all $g,g'\in G$, so that  
$\alpha$ defines a $2$-cocycle 
$\overline{\alpha}:\overline{G}\times\overline{G} 
\lra k^{\times}: (gH,g'H)\mapsto \alpha(g,g')$, i.e.  
$\alpha=\Inf_{\overline{G}}^{G}(\overline{\alpha})$, 
and we have $[\overline{\alpha}^{n|H|}]=1\in 
{\mathrm{H}}^2(\overline{G}, k^{\times})$. 
\smallskip 

\item Assume that  $\widehat Y$ is an $F^{\alpha}G$-module extending $Y$, 
defined by a $2$-cocycle $\alpha\in {\mathrm{Z}}^2(G,k^{\times})$  as in {\rm{(a)}}. 
If $X$ is an $FG$-module such that
$$
X{\downarrow}_H \cong Y \oplus\cdots\oplus Y,
$$
the direct sum of $e\geq 1$ copies of $Y$, 
then there exists an $F^{ {\overline\alpha}^{-1}}\overline{G}$-module $Z$ 
such that, as $FG$-modules, 
$$
X \cong \widehat Y \otimes_F 
\Inf_{F^{ {\overline\alpha}^{-1}}\overline{G}}^{F^{\alpha^{-1}}G}(Z)\,.
$$
\end{enumerate}
\end{lem}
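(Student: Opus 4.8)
The plan is to prove Lemma~\ref{Schur}(b) by a standard ``relative Schur'' argument, building the module $Z$ out of the multiplicity space of $Y$ in $X$. First I would fix the $F^{\alpha}G$-module $\widehat Y$ extending $Y$ given by part~(a), and consider the $F$-vector space $Z := \Hom_{FH}(Y, X{\downarrow}_H)$, which has dimension $e$ since $X{\downarrow}_H \cong Y^{\oplus e}$ and $Y$ is simple with $\End_{FH}(Y) = F$ ($F$ algebraically closed). The evaluation map gives an $FH$-isomorphism $\widehat Y{\downarrow}_H \otimes_F Z \xrightarrow{\ \sim\ } X{\downarrow}_H$, $y \otimes \varphi \mapsto \varphi(y)$; here $Z$ carries the trivial $H$-action, so as $FH$-modules this is just $Y^{\oplus e} \cong X{\downarrow}_H$. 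The point is to upgrade $Z$ to an $F^{\overline\alpha^{-1}}\overline G$-module in such a way that this isomorphism becomes $G$-equivariant after twisting.

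Next I would define the $G$-action on $Z$. For $g \in G$ and $\varphi \in Z = \Hom_{FH}(Y, X{\downarrow}_H)$, the natural candidate is $(g \cdot \varphi)(y) := g\cdot\bigl(\varphi(g^{-1}\cdot_{\widehat Y} y)\bigr)$, where the outer action is that of $X$ and the inner one is the $F^{\alpha}G$-action on $\widehat Y$; one checks using the two compatibility conditions (i)--(ii) of part~(a) and the $G$-invariance of $Y$ that $g\cdot\varphi$ is again $FH$-linear from $Y$ to $X{\downarrow}_H$, so lands in $Z$. The failure of this to be an honest action is measured precisely by the cocycle: since $\widehat Y$ is a module over $F^{\alpha}G$ (so $g\cdot_{\widehat Y}(g'\cdot_{\widehat Y} y) = \alpha(g,g')\,(gg')\cdot_{\widehat Y} y$) while $X$ is an honest $FG$-module, tracking the scalars shows $g\cdot(g'\cdot\varphi) = \alpha(g,g')^{-1}\,(gg')\cdot\varphi$, i.e. $Z$ is a module over $F^{\alpha^{-1}}G$. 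Moreover $H$ acts trivially on $Z$ (again from (i)--(ii) and $\End_{FH}(Y)=F$), and $\alpha = \Inf_{\overline G}^{G}(\overline\alpha)$ by part~(a), so the $F^{\alpha^{-1}}G$-action factors through $F^{\overline\alpha^{-1}}\overline G$; this produces the desired module $Z$, and $\Inf_{F^{\overline\alpha^{-1}}\overline G}^{F^{\alpha^{-1}}G}(Z)$ is just $Z$ viewed back over $G$.

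Finally I would verify that the evaluation map $\Theta\colon \widehat Y \otimes_F Z \to X$, $y \otimes \varphi \mapsto \varphi(y)$, is an isomorphism of $FG$-modules. It is already an $F$-isomorphism (it restricts to the $FH$-isomorphism above). The tensor product $\widehat Y \otimes_F Z$ of an $F^{\alpha}G$-module with an $F^{\alpha^{-1}}G$-module is an honest $FG$-module, with $g$ acting by $g\cdot(y\otimes\varphi) = (g\cdot_{\widehat Y} y)\otimes(g\cdot_Z \varphi)$, the two cocycle factors $\alpha(g,g')$ and $\alpha(g,g')^{-1}$ cancelling; so the target category matches. Equivariance is then a direct computation: $\Theta\bigl(g\cdot(y\otimes\varphi)\bigr) = (g\cdot_Z\varphi)(g\cdot_{\widehat Y} y) = g\cdot\bigl(\varphi(g^{-1}\cdot_{\widehat Y}(g\cdot_{\widehat Y} y))\bigr)$, and unwinding $g^{-1}\cdot_{\widehat Y}(g\cdot_{\widehat Y} y) = \alpha(g^{-1},g)^{-1}\cdot_{\widehat Y}(\text{something})$ — more precisely one normalizes $\widehat Y$ so that $\alpha$ is normalized and the identity acts as the identity, so $g^{-1}\cdot_{\widehat Y}(g\cdot_{\widehat Y} y) = y$ up to the scalar $\alpha(g^{-1},g)$, which is absorbed correctly by the definitions — to get $\Theta(g\cdot(y\otimes\varphi)) = g\cdot\varphi(y) = g\cdot\Theta(y\otimes\varphi)$. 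The main obstacle is purely bookkeeping: keeping the three different twisted actions (on $X$, on $\widehat Y$, on $Z$) and their cocycle scalars consistent, and checking that all the $2$-cocycles really are inflated from $\overline G$ so that the inflation notation in the statement is literally correct; once the normalization conventions for $\widehat Y$ are pinned down (using part~(a)), everything is forced.
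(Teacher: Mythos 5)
Your argument is correct in substance and is, mathematically, the same as the paper's: the paper simply cites Nagao--Tsushima, Theorem~3.5.8(i), for part~(b), noting that its proof only uses $X{\downarrow}_H \cong Y^{\oplus e}$ rather than simplicity of $X$, and the proof there is exactly the multiplicity-space argument $Z := \Hom_{FH}(Y, X{\downarrow}_H)$ with the evaluation isomorphism $\widehat Y\otimes_F Z\to X$ that you reconstruct from scratch. So you have supplied the details that the paper delegates to a reference.

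The one point you gesture at but should genuinely pin down is the cocycle bookkeeping in the last paragraph. With the naive formula $(g\cdot\varphi)(y) := g\cdot_X\varphi\bigl(g^{-1}\cdot_{\widehat Y}y\bigr)$ one actually computes $g\cdot(g'\cdot\varphi)=\alpha(g'^{-1},g^{-1})\,(gg')\cdot\varphi$, and $(g,g')\mapsto\alpha(g'^{-1},g^{-1})$ agrees with $\alpha(g,g')^{-1}$ only up to the coboundary of $g\mapsto\alpha(g,g^{-1})$; correspondingly, the equivariance check for $\Theta$ produces the stray scalar $\alpha(g^{-1},g)$. The cleanest fix is to define the action on $Z$ via the honest inverse $u_g^{-1}=\alpha(g,g^{-1})^{-1}u_{g^{-1}}$ in the twisted group algebra $F^{\alpha}G$, i.e.\ $(g\cdot\varphi)(y):=g\cdot_X\varphi(u_g^{-1}y)$, which yields the $\alpha^{-1}$-twisted action and the equivariance of $\Theta$ on the nose with no normalization hypothesis; alternatively, one can first replace $\overline\alpha$ by a cohomologous representative satisfying $\overline\alpha(\bar g,\bar g^{-1})=1$ for all $\bar g$ (possible since $F^\times$ is divisible, and harmless for the condition $\widehat Y{\downarrow}_H=Y$ because $\alpha$ is inflated from $\overline G$, hence already trivial on $H\times H$, so the trivializing $1$-cochain can be chosen to be $1$ on $H$). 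With either adjustment your proof closes.
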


\begin{proof} 
Part (a) is exactly Schur's result \cite[Theorem 3.5.7]{NT89}. 
Part (b) is a generalization of Schur's theorem \cite[Theorem 3.5.8(i)]{NT89}.
More specifically, although \cite[Theorem 3.5.8(i)]{NT89} is 
stated for a module $X$ which is \textit{simple}, 
its proof only requires the assumption that 
$X{\downarrow}_H\cong Y\oplus\cdots\oplus Y$ 
in order to obtain the conclusion
that there exists an $F^{ {\overline\alpha}^{-1}}(\overline{G})$-module $Z$ 
such that $X \cong \widehat Y \otimes_F 
\Inf_{F^{ {\overline\alpha}^{-1}}\overline{G}}^{F^{\alpha^{-1}}G}(Z)$.
\end{proof}

\begin{rem}{\vbox{\ }}
\begin{enumerate}[\rm(a)]
\item We recall that inflation of $2$-cocycles 
${\mathrm{Z}}^2(\overline{G},k^{\times})\lra {\mathrm{Z}}^2(G,k^{\times}): \alpha\mapsto 
\Inf_{\overline{G}}^{G}(\overline{\alpha})$ 
induces an inflation homomorphism  $\Inf_{\overline{G}}^{G}: 
{\mathrm{H}}^2(\overline{G},k^{\times})\lra {
\mathrm{H}}^2(G,k^{\times}):[\overline{\alpha}]\mapsto 
[\Inf_{\overline{G}}^{G}(\overline{\alpha})]$ in cohomology, 
but the latter need not be injective in general. Therefore, 
it may happen that 
$\Inf_{F^{{\overline\alpha}^{-1}}\overline{G}}^{F^{\alpha^{-1}}G}(Z)$ 
is in fact a module over the non-twisted group algebra $FG$, 
while $Z$ is a module over the twisted group algebra 
$F^{ {\overline\alpha}^{-1}}\overline{G}\ncong F\overline{G}$.

\item In fact, more general statements than Lemma~\ref{Schur}(b) 
can be discussed
by making use of results of E.C. Dade. We refer the reader to the book
of A. Marcus \cite[\S 2.3.B]{Ma99}.
\end{enumerate}
\end{rem}

\begin{lem}\label{GInvariantGeneral}
Assume that $G$ has no strongly $p$-embedded subgroups and  
$V$ is an indecomposable endo-trivial $kG$-module.
If $H \vartriangleleft G$ such that $p\,{\not|}\,|H|$ and 
$L$ is a composition factor of $V{\downarrow}_H$, 
then $L$ is $G$-invariant.
\end{lem}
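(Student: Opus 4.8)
The plan is to argue by contradiction, exploiting the rigidity of groups with no strongly $p$-embedded subgroups via Lemma~\ref{et_se}. Suppose $L$ is a composition factor of $V{\downarrow}_H$ that is \emph{not} $G$-invariant, and let $T:=\mathrm{Stab}_G([L])=\{g\in G\mid \lconj{g}{L}\cong L\}$ be its inertia group in $G$; by assumption $T\lneqq G$. Since $H\leq T$ and $p\nmid|H|$, the subgroup $H$ is a Sylow $p$-subgroup of $H$, but more to the point $T$ contains $H$ and hence $p\mid|T|$ is not automatic — so first I would note that since $V$ is indecomposable endo-trivial, its vertex is a Sylow $p$-subgroup $P$ of $G$ (Remark~\ref{rem:dim}), and I want to locate $P$ inside $T$. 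The key observation is that the set of composition factors of $V{\downarrow}_H$ is permuted by $G$ (because $H\vartriangleleft G$), so the $G$-orbit of $[L]$ has size $[G:T]$, and each $G$-conjugate of $L$ occurs with the same multiplicity in $V{\downarrow}_H$.

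The heart of the argument should be a Clifford-theoretic reduction showing that $V$ is induced from the subgroup $T$ (or from a subgroup containing $T$ that still lies inside a proper subgroup of $G$). Concretely: let $e([L])$ be the sum, inside $kG$ or rather inside $\mathrm{End}_k(V)$, of the primitive central idempotents of $kH$ associated to the $G$-orbit representatives; decomposing $V{\downarrow}_H$ along the blocks of $kH$ indexed by the $G$-orbit of $[L]$, standard Clifford theory gives $V\cong W{\uparrow}^G$ for some $kT$-module $W$ (the "$[L]$-homogeneous piece" of $V$, viewed as a $kT$-module), where $T$ is the inertia group. Since $T\lneqq G$, Lemma~\ref{et_se} then forces $T$ to be a strongly $p$-embedded subgroup of $G$, contradicting the hypothesis. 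This yields the result.

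The main obstacle — and the step requiring the most care — is making the Clifford-theoretic induction statement $V\cong W{\uparrow}^G$ genuinely rigorous when $L$ is only a \emph{composition factor} rather than a submodule or summand of $V{\downarrow}_H$. Because $p\nmid|H|$, however, $kH$ is semisimple, so $V{\downarrow}_H$ is a \emph{direct sum} of its composition factors; this is exactly what rescues the argument, and it is presumably why the hypothesis $p\nmid|H|$ is imposed. Thus $V{\downarrow}_H\cong\bigoplus_{i} L_i^{\oplus a_i}$ with the $L_i$ running over the $G$-conjugates of $L$ (all with a common multiplicity $a:=a_i$, by transitivity of the $G$-action), the isotypic component $V_{[L]}:=L^{\oplus a}$ is a $kT$-submodule of $V{\downarrow}_T$, and the canonical map $V_{[L]}{\uparrow}^G\to V$ is an isomorphism by the usual Clifford correspondence (Mackey plus counting dimensions). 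Once this identification is in place, the appeal to Lemma~\ref{et_se} is immediate. I would also double-check the degenerate case $T=H$ separately, where one may invoke the Navarro--Robinson result on $p$-nilpotent groups or simply observe that $H$ itself, being a $p'$-group, cannot be strongly $p$-embedded, again giving a contradiction.
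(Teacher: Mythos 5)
Your proof is correct and follows essentially the same route as the paper's: form the inertia group $T=T_G(L)$, establish that $V$ is induced from $T$, and invoke Lemma~\ref{et_se} to force $T=G$, since otherwise $T$ would be strongly $p$-embedded. The only difference in presentation is that the paper packages the induction step via the Fong--Reynolds correspondence (citing \cite[Theorem 5.5.10]{NT89} for a Morita equivalence between $B$ and the corresponding block $\widetilde B$ of $kT$, given by induction), whereas you re-derive the isomorphism $V\cong W{\uparrow}^G$ by hand, with $W$ the $L$-isotypic component of $V{\downarrow}_H$, using semisimplicity of $kH$; these are the same Clifford-theoretic mechanism.
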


\begin{proof}
Set $\widetilde G:=T_G(L)$, the stabilizer of $L$ in $G$,
and let $B$ be the block of $kG$ to which $V$ belongs.
Let $\widetilde B$
be the block of $k\widetilde G$ such that 
$\widetilde B$ is the
Fong-Reynolds correspondent of $B$, see \cite[Theorem 5.5.10]{NT89}.
Then, $B$ and $\widetilde B$ are Morita equivalent. 
Write $\widetilde V$ 
for the $k\widetilde G$-module in $\widetilde B$  corresponding 
to $V$ via this Morita equivalence. Then, $V=\widetilde V{\uparrow}^G$.
Thus Lemma \ref{et_se} yields that
$\widetilde V$ is endo-trivial.
If $\widetilde G\, {\not=}\,G$, then it follows from 
Lemma~\ref{et_se}
that $\widetilde G$ is strongly $p$-embedded in $G$,
which is a contradiction.
\end{proof}

\begin{thm}\label{EndoTrivial}
Assume that the $p$-rank of $G$ is at least $2$ and 
that $G$ has no strongly $p$-embedded subgroups.
Let $H \vartriangleleft G$ with $p\,{\not|}\,|H|$,
and set $\overline G:=G/H$. Let $V$ be an indecomposable endo-trivial 
$kG$-module. Then the following hold:
\begin{enumerate}
 \renewcommand{\labelenumi}{\rm{(\alph{enumi})}}
  \item
There exists a $2$-cocycle  
$\overline\alpha\in{\mathrm{Z}}^2(\overline G,k^\times)$
such that
$$
V \ \cong \ 1b \otimes_k W,
$$
where $1b$ is a one-dimensional $k^{\alpha}G$-module for $\alpha:=\Inf_{\overline{G}}^{G}(\overline{\alpha})$ and 
$W:=\Inf_{k^{{\overline\alpha}^{-1}}\overline{G}}^{k^{\alpha^{-1}}G}(Z)$ 
for a $k^{{\overline\alpha}^{-1}}\overline G$-module $Z$.
Moreover, if $P\in\Syl_p(G)$ and $\overline{P}:=HP/H=(H\rtimes P)/H$, 
then we have $[\alpha{\downarrow}_{H\rtimes P}]=1\in
{\mathrm{H}}^2(H\rtimes P,k^\times)$ and  
$[\overline\alpha{\downarrow}_{\overline P}]=1
\in{\mathrm{H}}^2(\overline P,k^\times)$. 
\smallskip 
  \item 
Keep the notation of (a), and assume moreover that  $[V]\in K(G)$. Set $n:=|[\overline\alpha]|$,  the order of 
$[\overline\alpha]\in{\mathrm{H}}^2(\overline G,k^\times)$. 
Then $1c:= (1b)^{\otimes n}$ is a one-dimensional $kG$-module, $Z^{\otimes n}$ is a trivial source $k\overline G$-module, and 
 $W^{\otimes n}=\Inf_{\overline{G}}^{G}(Z^{\otimes n})$ is a a trivial source endo-trivial $kG$-module.
In other words,  
$$[V^{\otimes n}]=[1c]+[W^{\otimes n}]
\in X(G)+\Inf_{\overline{G}}^{G}(K(\overline{G}))\,.$$
\end{enumerate}
\end{thm}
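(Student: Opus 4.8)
The plan is to prove Theorem~\ref{EndoTrivial} in two stages, following the structure of the statement. Before anything else I would fix a composition factor $Y$ of $V{\downarrow}_H$: by Lemma~\ref{GInvariantGeneral}, $Y$ is $G$-invariant, and since $p\nmid|H|$ the algebra $kH$ is semisimple, so $V{\downarrow}_H$ is a direct sum of copies of the various $G$-conjugates of simple $kH$-modules. A standard argument (Clifford theory, using that $V$ is indecomposable and $H$ normal) shows $V{\downarrow}_H$ is in fact isotypic, i.e. $V{\downarrow}_H\cong Y^{\oplus e}$ for some $e\ge 1$ — here one uses that the block $B$ of $V$ lies over a single $H$-block and the Fong–Reynolds setup already invoked in Lemma~\ref{GInvariantGeneral} forces isotypicity when $G$ stabilizes $Y$. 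Then Lemma~\ref{Schur}(a) gives an extension $\widehat Y$ of $Y$ to an $F^\alpha G$-module with $\alpha=\Inf_{\overline G}^G(\overline\alpha)$, and Lemma~\ref{Schur}(b) yields a $k^{\overline\alpha^{-1}}\overline G$-module $Z$ with $V\cong \widehat Y\otimes_k \Inf_{k^{\overline\alpha^{-1}}\overline G}^{k^{\alpha^{-1}}G}(Z)$. For part~(a), I would additionally take $\dim_k Y=1$: since $V$ restricted to a Sylow $p$-subgroup $P$ is endo-trivial (Lemma~\ref{lem:resinf}(\ref{lem:resinf3})), $V{\downarrow}_{H\rtimes P}$ is endo-trivial, hence its restriction to $H$ — which is $Y^{\oplus e}{\downarrow}_H$ up to the projective-free reduction — forces $\dim_k Y=1$ because an endo-trivial module has $k$-dimension prime to $p$ while restricting to the $p'$-group $H$ it must be a sum of $\dim Y$ copies of each constituent; cleanly, $V{\downarrow}_H$ being a trivial (one-dimensional) $kH$-module follows from $\End_k(V){\downarrow}_H\cong k_H\oplus(\text{proj})$ and $kH$ semisimple projective-free. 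So $1b:=\widehat Y$ is one-dimensional over $k^\alpha G$ and $W:=\Inf(Z)$, giving $V\cong 1b\otimes_k W$. The vanishing of $[\alpha{\downarrow}_{H\rtimes P}]$ follows because $1b{\downarrow}_{H\rtimes P}$ is an honest one-dimensional $k(H\rtimes P)$-module (being a direct summand of $V{\downarrow}_{H\rtimes P}$ after twisting, or directly: $P$ is a $p$-group so $H^2(P,k^\times)$-issues are controlled and $H^2(H\rtimes P,k^\times)\cong H^2(P,k^\times)$ as $p\nmid|H|$, and a one-dimensional twisted module over a $p$-group in characteristic $p$ untwists); then $[\overline\alpha{\downarrow}_{\overline P}]=1$ follows since $\overline P\cong P$ maps isomorphically and inflation/restriction are compatible.

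For part~(b), the key observation is that tensoring $V$ with itself $n$ times, where $n=|[\overline\alpha]|$, kills the cocycle: $1b^{\otimes n}$ is a module over $k^{\alpha^n}G$ and $[\alpha^n]=\Inf_{\overline G}^G([\overline\alpha]^n)=\Inf_{\overline G}^G(1)=1$, so after a coboundary adjustment $1c:=1b^{\otimes n}$ is a genuine one-dimensional $kG$-module, and similarly $Z^{\otimes n}$ becomes a module over $k^{\overline\alpha^{-n}}\overline G=k\overline G$ (untwisted), with $W^{\otimes n}=\Inf_{\overline G}^G(Z^{\otimes n})$ a genuine $kG$-module. From $V^{\otimes n}\cong 1c\otimes_k W^{\otimes n}$ and the fact that $[V]\in K(G)$ — i.e. $V$ is a trivial source endo-trivial module restricting trivially to $P$ — one deduces $[V^{\otimes n}]=[1c]+[W^{\otimes n}]$ in $T(G)$, and since $K(G)$ is a group (Lemma~\ref{lem:TT(G)}) and $[1c]\in X(G)\le K(G)$, also $[W^{\otimes n}]=[V^{\otimes n}]-[1c]\in K(G)$. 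It remains to identify $[W^{\otimes n}]$ with something in $\Inf_{\overline G}^G(K(\overline G))$: since $W^{\otimes n}=\Inf_{\overline G}^G(Z^{\otimes n})$ and inflation induces an injection $T(\overline G)\hookrightarrow T(G)$ (Lemma~\ref{lem:resinf}(\ref{lem:resinf2})) with $p\nmid|H|$, the module $Z^{\otimes n}$ (more precisely its projective-free part) is endo-trivial over $\overline G$, and its restriction to $\overline P$ is the projective-free part of $W^{\otimes n}{\downarrow}_{\overline P}$. Because $V{\downarrow}_P$ is trivial (a one-dimensional $kP$-module is trivial, $P$ being a $p$-group) and $1c{\downarrow}_P$ is trivial, $W^{\otimes n}{\downarrow}_P$ is trivial up to projectives, whence $Z^{\otimes n}{\downarrow}_{\overline P}$ is trivial up to projectives, i.e. $[Z^{\otimes n}]\in K(\overline G)$. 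This gives $[V^{\otimes n}]=[1c]+[W^{\otimes n}]\in X(G)+\Inf_{\overline G}^G(K(\overline G))$ as claimed, and that $Z^{\otimes n}$ has trivial source (resp. $W^{\otimes n}$) follows from $K(\overline G)$ consisting of trivial source classes (Lemma~\ref{lem:TT(G)}(b)) together with the fact that inflation of a trivial source module along a $p'$-quotient is again trivial source.

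The main obstacle I anticipate is the bookkeeping around the twisted group algebras and, specifically, making rigorous the passage "untwist after tensoring $n$ times": one must choose the extension $\widehat Y$ and the cocycle $\alpha$ coherently so that $\alpha^n$ is not merely cohomologically trivial but can be replaced by the trivial cocycle compatibly on both tensor factors, i.e. the coboundary $\alpha^n=\partial\mu$ used to untwist $1b^{\otimes n}$ must be the inflation of the coboundary used to untwist $Z^{\otimes n}$; this is where the precise form of Lemma~\ref{Schur}(a) — that $\alpha=\Inf_{\overline G}^G(\overline\alpha)$ — is essential, since it lets the entire discussion descend to $\overline G$. A secondary delicate point is the isotypicity $V{\downarrow}_H\cong Y^{\oplus e}$ and the reduction to $\dim_k Y=1$: I would want to argue this cleanly via $\End_k(V)\cong k_G\oplus(\text{proj})$ restricted to the $p'$-group $H$, giving $\End_k(Y^{\oplus e}){\downarrow}$ semisimple with a unique trivial constituent, which pins down both $e$ in terms of $\dim Y$ and forces $\dim_k Y=1$. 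Everything else — the group-theoretic properties of $K$, the injectivity of inflation, triviality of one-dimensional $kP$-modules — is either already in the excerpt or routine.
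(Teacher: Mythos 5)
Your proposal for part (b) is essentially sound and follows the same route as the paper, but your proposed argument for part (a) — specifically for showing $\dim_k Y=1$ — has a genuine gap, and the replacement argument you sketch would fail.

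You claim that $\mathrm{End}_k(V){\downarrow}_H\cong k_H\oplus (\mathrm{proj})$ together with ``$kH$ semisimple projective-free'' forces $\dim_k Y=1$. But $kH$ is the opposite of projective-free: since $p\nmid |H|$, \emph{every} $kH$-module is projective, and the decomposition $\mathrm{End}_k(V)\cong k_G\oplus Q$ gives no information after restriction to $H$. Concretely, if $V{\downarrow}_H\cong Y^{\oplus e}$ with $Y$ simple of dimension $d$, then $[\mathrm{End}_k(V){\downarrow}_H:k_H]=e^2$ by Schur's lemma, and the comparison with $k_H\oplus Q{\downarrow}_H$ only yields $e^2 = 1+[Q{\downarrow}_H:k_H]$. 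There is no ``unique trivial constituent'': projective $kG$-modules restricted to a $p'$-subgroup routinely contain trivial constituents (e.g.\ $kG{\downarrow}_H$ contains $[G:H]$ copies of $k_H$), so $[Q{\downarrow}_H:k_H]$ is large in general and $e^2$ is not pinned down, and in any case this computation bounds $e$, not $d=\dim_k Y$. Your first variant, claiming $\dim_k V$ prime to $p$ ``forces'' $\dim_k Y=1$, is likewise a non-sequitur.

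The missing idea is the Navarro--Robinson machinery, and it is not optional. After untwisting over $H\rtimes P$ one obtains $V{\downarrow}_{H\rtimes P}\cong \widehat Y{\downarrow}_{H\rtimes P}\otimes_k W{\downarrow}_{H\rtimes P}$ as a tensor product of genuine $k(H\rtimes P)$-modules; then \cite[Lemma~1(iii)]{NR12} gives that each tensor factor is endo-trivial, and $\widehat Y{\downarrow}_{H\rtimes P}$ is \emph{simple} (since its restriction to $H$ is the simple module $Y$). Now $H\rtimes P$ is $p$-nilpotent of $p$-rank at least~$2$, and \cite[Theorem]{NR12} — that simple endo-trivial modules over such groups are one-dimensional — gives $\dim_k Y=\dim_k\widehat Y=1$. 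Note that your argument never uses the hypothesis that the $p$-rank of $G$ is at least~$2$; that should itself have been a warning sign, since this hypothesis is exactly what makes the Navarro--Robinson theorem applicable. (Your handling of $[\overline\alpha{\downarrow}_{\overline P}]=1$ via $\mathrm{H}^2(P,k^\times)=0$ is fine and an acceptable alternative to the paper's citation of \cite[Proof of Theorem 3.5.11(ii)]{NT89}; and your part~(b) argument, including the cocycle bookkeeping and the identification $[Z^{\otimes n}]\in K(\overline G)$ via restriction to $\overline P$, is correct and essentially the same as the paper's.)
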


\begin{proof}
(a) Let $L$ be a composition factor of $V{\downarrow}_H$. 
Then, by Lemma~\ref{GInvariantGeneral}, $L$ is $G$-invariant. 
Let $B$ and $b$ be the blocks of $kG$ and $kH$, respectively,
to which $V$ and $L$ belong. So clearly $B$ covers $b$.
Denote by $\theta\in{\mathrm{Irr}}(b)$ the ordinary irreducible 
character corresponding to $L$, 
and hence ${\mathrm{Irr}}(b)=\{\theta \}$.
\par
Since $B$ covers $b$, ${\mathrm{Irr}}(b)=\{\theta \}$, $\theta$
is $G$-invariant and $H$ is a $p'$-group, 
it follows from \cite[Lemma 5.5.8(ii)]{NT89} that
$$
       V{\downarrow}_H \cong L\oplus\cdots\oplus L\,.
$$
Now, since $L$ is a $G$-invariant simple $kH$-module, we know 
from another Lemma~\ref{Schur}(a) that there exist
a $2$-cocycle $\overline{\alpha}\in{\mathrm{Z}}^2(\overline G,k^\times)$ 
and a $k^\alpha G$-module $\widehat L$, for $\alpha=\Inf_{\overline{G}}^{G}(\overline{\alpha})$, such that 
$\widehat L{\downarrow}_H = L$. 
Then by Lemma~\ref{Schur}(b),
there exists a $k^{ {\alpha}^{-1}} G$-module 
$W$ such that
$$         V \ \cong \ \widehat L \otimes_k W \,,$$
where $W:=\Inf_{k^{{\overline\alpha}^{-1}}\overline{G}}^{k^{\alpha^{-1}}G}(Z)$
for a $k^{{\overline\alpha}^{-1}}\overline G$-module $Z$.

Then, we have  
$[\overline\alpha{\downarrow}_{\overline  P}]=1$ as an element
of ${\mathrm{H}}^2(\overline P, k^\times)$ 
by \cite[Proof of Theorem 3.5.11(ii)]{NT89},
and therefore $[\alpha{\downarrow}_{H\rtimes P}]=1$ as an element
of ${\mathrm{H}}^2(H\rtimes P, k^\times)$.
This implies that 
$$
V{\downarrow}_{H\rtimes P} 
  \cong \widehat L{\downarrow}_{k(H\rtimes P)}
   \otimes_k W{\downarrow}_{k(H\rtimes P)} \,,
$$
where all three modules are modules over the 
(genuine non-twisted) group algebra $k(H\rtimes P)$.
Then, by Lemma~\ref{lem:resinf}(a), $V{\downarrow}_{H\rtimes P}$ 
is endo-trivial.
Hence, both  $\widehat L{\downarrow}_{k(H\rtimes P)}$ 
and $W{\downarrow}_{k(H\rtimes P)}$
are endo-trivial by \cite[Lemma 1(iii)]{NR12}.
In addition, since $L$ is simple and $\widehat L$ is an extension of~$L$, 
$\widehat L{\downarrow}_{k(H\rtimes P)}$ is simple as well.
Thus, as the  the $p$-rank of $G$ is assumed to be at least~2, 
\cite[Theorem]{NR12} yields $\dim L=\dim\widehat L=1$.  So, we set $1b:=\widehat L$, and (a) follows.
\smallskip
(b)
First since $V$ is an indecomposable endo-trivial $kG$-module 
with $[V]\in K(G)$, we have $[V^{\otimes n}]\in K(G)$ as well. Then, 
since $|[\overline\alpha]|=n$,  
we have by (a) that
$$ V^{\otimes n} \cong 1c\otimes_k W^{\otimes n}\,,$$
where, by Lemma~\ref{Schur}(a), $1c:=(1b)^{\otimes n}$ is a one-dimensional (genuine non-twisted) 
$kG$-module  and  $W^{\otimes n}=\Inf_{\overline{G}}^{G}(Z)$ is a non-twisted
$kG$-module inflated from the non-twisted $k\overline{G}$-module $Z^{\otimes n}$.
Since $V^{\otimes n}$ is endo-trivial, again by \cite[Lemma 1(iii)]{NR12}, 
both $1c$ and $W^{\otimes n}$ are endo-trivial $kG$-modules, 
and thus $W^{\otimes n}$ is also endo-trivial as a $k{\overline G}$-module by 
Lemma~\ref{lem:resinf}(\ref{lem:resinf2}).
Now in $T(G)$ we have 
$$[V^{\otimes n}]=[1c]+[W^{\otimes n}]\,,$$
where $[1c]\in X(G)\leq K(G)$. Therefore it remains to prove that  
$[W^{\otimes n}]\in \Inf_{\overline{G}}^{G}(K(\overline{G}))$. 
But this is clear. 
Indeed, since $[W^{\otimes n}]=[V^{\otimes n}]-[1c]\in K(G)\,$,
$W^{\otimes n}$ must be a trivial source $kG$-module 
(possibly the direct sum of an indecomposable endo-trivial module and projective modules if $n>1$) by Lemma~\ref{lem:TT(G)}(b) 
and therefore so is the $k\overline{G}$-module $Z^{\otimes n}$, 
that is, $[W^{\otimes n}]=\Inf_{\overline{G}}^{G}([Z^{\otimes n}])\in \Inf_{\overline{G}}^{G}(K(\overline{G}))$.
\end{proof}

As a corollary we obtain Theorem~\ref{thm:PreMain} of the introduction.

\begin{proof}[Proof of Theorem~\ref{cor:KXK}]
Since ${\mathrm{H}}^2(\overline G,k^\times)=1$, the integer $n$ in 
Theorem~\ref{EndoTrivial}(b) is equal to~$1$. Hence the claim follows by 
identifying $K(\overline{G})$ 
with $\Inf_{\overline{G}}^{G}(K(\overline{G}))$.
\end{proof}


\section{Groups with dihedral Sylow 2-subgroups}
\label{ssec:d2n}
Throughout this section we assume that $P$ be a dihedral Sylow $2$-subgroup of $G$ of order at least $8$. 
Gorenstein and Walter proved in \cite{GorensteinWalter65} 
(see also  \cite[Theorem on p.462]{Go68}) 
that $\overline{G}:=G/O_{2'}(G)$ is isomorphic to either
\begin{enumerate}
   \item[\rm{(1)}] $P$,
   \item[\rm{(2)}] the alternating group $\fA_7$,  or
   \item[\rm{(3)}] a subgroup of $\PgammaL(2,q)$ containing $\PSL(2,q)$, 
where $q$ is a power of an odd prime.
\end{enumerate}

\begin{ass}\label{AssD}
For the purposes of our computations, we split case (3) above  in further 
subcases and say that $G$ satisfies the hypothesis:
\begin{enumerate}
   \item[\textbf{(D1)}] if $\overline{G}\cong P$;
   \item[\textbf{(D2)}] if $\overline{G}\cong \fA_7$;
   \item[\textbf{(D3)}] if 
$\overline{G}\cong {\mathrm{PSL}}(2,9) \cong \mathfrak A_6$; 
   \item[\textbf{(D4)}] if $\overline{G}\cong \PGL(2,9)
\cong \mathfrak A_6.2_2$;
   \item[\textbf{(D5)}] if $\overline{G}\cong \PSL(2,q)$,  
where  $q$ is a power of an odd prime with $q\neq 9$, and 
$q\equiv\pm 1\pmod{8}$;
   \item[\textbf{(D6)}] if $\overline{G}\cong \PGL(2,q)$,  
where $q$ is a power of an odd prime with $q\neq 9$;
   \item[\textbf{(D7)}]  if $\overline{G}\cong \PSL(2,q)\rtimes C_f$,  
where $q$ is a power of an odd prime with 
$q\neq9$, $q\equiv\pm 1\pmod{8}$, and $f>1$ is odd; 
   \item[\textbf{(D8)}]  if $\overline{G}\cong \PGL(2,q)\rtimes C_f$,  
where $q$ is a power of an odd prime with $q\neq 9$, and $f>1$ is odd. 
\end{enumerate}
\end{ass}
The splitting of case (3) into  \textbf{(D3)}-\textbf{(D8)}  
follows from the fact that the structure of $\PgammaL(2,q)$, 
where $q=r^m$ is a power of an odd prime~$r$, is well-known:
$\PgammaL(2,q)\cong \PGL(2,q)\rtimes 
{\mathrm{Gal}}(\mathbb F_{q}/\mathbb F_{r})$,  
where ${\mathrm{Gal}}(\mathbb F_{q}/\mathbb F_{r})$  
is cyclic of order $m$. Moreover 
\cite[Chapter~6~(8.9)]{Su86} shows that $f$ is odd.

\begin{lem}\label{BenderSuzuki}
There are no strongly $2$-embedded subgroups in $G$.
\end{lem}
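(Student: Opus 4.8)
The plan is to show that $G$, which has a dihedral Sylow $2$-subgroup $P$ of order at least $8$, cannot have a strongly $2$-embedded subgroup, by appealing to the classification of groups with a strongly $2$-embedded subgroup together with the Gorenstein--Walter structure theorem already recalled in this section. First I would recall the theorem of Bender and Suzuki: if a finite group $X$ has a strongly $2$-embedded subgroup, then a Sylow $2$-subgroup of $X$ is either cyclic or generalized quaternion, or else $X/O_{2'}(X)$ has a normal subgroup isomorphic to one of $\PSL(2,2^n)$, $\SU(3,2^n)$, or the Suzuki group $\mathrm{Sz}(2^n)$ with $n\geq 2$ (equivalently, $O^{2'}(X/O_{2'}(X))$ is one of these simple groups of even characteristic). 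In each of those cases the Sylow $2$-subgroups are \emph{not} dihedral: a dihedral group of order at least $8$ is neither cyclic nor generalized quaternion, and the groups $\PSL(2,2^n)$, $\SU(3,2^n)$, $\mathrm{Sz}(2^n)$ have elementary abelian, respectively non-dihedral of class $\ge 2$, Sylow $2$-subgroups. Hence $G$ cannot be of any of these types, so $G$ has no strongly $2$-embedded subgroup.

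Alternatively, and perhaps more in the spirit of this paper, I would argue directly from the Gorenstein--Walter list of possibilities for $\overline{G}=G/O_{2'}(G)$ recalled at the start of this section, together with the elementary observation that $G$ has a strongly $2$-embedded subgroup if and only if $\overline{G}$ does (a strongly $p$-embedded subgroup necessarily contains $O_{p'}(G)$, so strong $2$-embedding passes between $G$ and $G/O_{2'}(G)$). In case (1), $\overline G\cong P$ is a $2$-group and so has no proper subgroup of even order that is strongly $2$-embedded --- indeed $P$ itself is the only Sylow $2$-subgroup. In case (2), $\overline G\cong\fA_7$: here a Sylow $2$-subgroup is dihedral of order $8$, it has more than one Sylow $2$-subgroup, and the intersection of two distinct ones is nontrivial (for instance, any two dihedral subgroups of order $8$ in $\fA_7$ share a common involution, which one checks directly), so there is no strongly $2$-embedded subgroup. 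In case (3), $\PSL(2,q)\leq \overline G\leq\PgammaL(2,q)$ with $q$ odd: for $q$ odd the Sylow $2$-subgroups of $\PSL(2,q)$ are dihedral, and two distinct Sylow $2$-subgroups of $\PSL(2,q)$ always intersect nontrivially when $q\ge 7$ (so that $|P|\ge 8$), because the Borel and torus structure forces a common involution; consequently by Lemma~\ref{lem:TT(G)}(a)-style reasoning, or simply by the definition, no subgroup of $\overline G$ can be strongly $2$-embedded.

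The main obstacle, and the place where I would be most careful, is the verification in cases (2) and (3) that \emph{any two} distinct Sylow $2$-subgroups have nontrivial intersection; equivalently, that $G$ does not act on the set of its Sylow $2$-subgroups with the normalizer of one of them being strongly $2$-embedded. For $\PSL(2,q)$ this is classical: the number of involutions and the fact that each involution lies in a unique maximal dihedral subgroup containing a given Sylow $2$-subgroup can be read off from the subgroup structure in, say, Dickson's theorem or Suzuki's book, and for $q\equiv\pm1\pmod 8$ the Sylow $2$-subgroup has order at least $8$ precisely when $q\ge 7$. For $\fA_7$ one argues by a short direct count or quotes the known fact that $\fA_7$ has no strongly $2$-embedded subgroup. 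I would therefore write the proof either as a one-line appeal to the Bender--Suzuki classification of strongly $2$-embedded subgroups --- which is the cleanest route --- or, if a self-contained argument is preferred, by running through the three Gorenstein--Walter cases and citing the standard subgroup structure of $\PSL(2,q)$ and $\fA_7$ in each.
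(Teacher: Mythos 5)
Your preferred route---the one-line appeal to the Bender--Suzuki classification of finite groups with a strongly $2$-embedded subgroup---is exactly what the paper does (its proof cites Bender's \emph{Satz~1}, Suzuki, and Gorenstein--Walter), and the reasoning is sound: none of the allowed structures (cyclic or generalized quaternion Sylow $2$-subgroup, or $O^{2'}(G/O_{2'}(G))$ one of $\PSL_2(2^n)$, $\mathrm{Sz}(2^n)$, $\PSU_3(2^n)$) is compatible with a dihedral Sylow $2$-subgroup of order at least $8$.

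One correction regarding the alternative argument you sketch. The parenthetical justification---that a strongly $p$-embedded subgroup necessarily contains $O_{p'}(G)$, so that strong $2$-embedding passes freely between $G$ and $\overline G$---is false as a general statement: $\langle (1\,2)\rangle$ is strongly $2$-embedded in $\mathfrak S_3$ yet does not contain $O_{2'}(\mathfrak S_3)=\mathfrak A_3$, and indeed $\mathfrak S_3/O_{2'}(\mathfrak S_3)\cong C_2$ has no strongly $2$-embedded subgroup at all. What \emph{is} true is that if $H$ is strongly $p$-embedded in $G$ and $HO_{p'}(G)\neq G$, then $HO_{p'}(G)/O_{p'}(G)$ is strongly $p$-embedded in $\overline G$; excluding the degenerate case $HO_{p'}(G)=G$ under the present hypotheses again requires a nontrivial input (essentially Bender--Suzuki for $2$-nilpotent groups of $2$-rank $\geq 2$), so the reduction is not the shortcut it appears to be. In addition, the assertion in cases (2) and (3) that \emph{every} pair of distinct Sylow $2$-subgroups of $\mathfrak A_7$ or of $\PSL(2,q)$ meets nontrivially---which, if true, would indeed rule out strong $2$-embedding---is left unverified, as you acknowledge. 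So the Bender--Suzuki citation is really the only complete version of the proof on the table, and that matches the paper.
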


\begin{proof}
This follows from the Bender-Suzuki Theorem
\cite[Satz 1]{Be71} (cf. \cite{Su64}) and also
a result of Gorenstein-Walter \cite{GorensteinWalter65}, see 
\cite[Theorem on p.462]{Go68}, too.
\end{proof}

\begin{lem}\label{Multiplier}
Set $h:=|{\mathrm{H}}^2(\overline  G, k^\times)|$. Then 
$$h= \begin{cases}
    3  & \text{ if } \, \overline  G
\in \{\mathfrak A_6, \mathfrak A_7, \PGL(2,9)\}, \\
    1  & \text{otherwise}.
\end{cases}$$
\end{lem}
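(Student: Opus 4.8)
The plan is to compute $h = |{\mathrm{H}}^2(\overline G, k^\times)|$ case by case, using the classification of possibilities for $\overline G = G/O_{2'}(G)$ recalled at the start of this section (the Gorenstein--Walter list) together with knowledge of Schur multipliers. Since $k$ is an algebraically closed field of characteristic $2$, the group ${\mathrm{H}}^2(\overline G, k^\times)$ is the $2'$-part of the Schur multiplier $M(\overline G) = {\mathrm{H}}^2(\overline G, \mathbb C^\times)$; more precisely, by standard theory (see e.g. \cite[Chapter~3]{NT89}), ${\mathrm{H}}^2(\overline G, k^\times)$ is isomorphic to the $p'$-part of $M(\overline G)$ when $k$ is algebraically closed of characteristic $p$, because the $p$-part of the multiplier is killed over a field of characteristic $p$ with enough roots of unity. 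So the task reduces to recording the odd part of $M(\overline G)$ in each case.

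The key steps, following the subdivision in Hypotheses~\ref{AssD}: In case \textbf{(D1)}, $\overline G \cong P$ is a dihedral $2$-group, whose Schur multiplier is trivial (or a $2$-group), so its $2'$-part is trivial and $h = 1$. In case \textbf{(D2)}, $\overline G \cong \fA_7$, which has $M(\fA_7) \cong C_6$, so the $2'$-part is $C_3$ and $h = 3$. In cases \textbf{(D3)} and \textbf{(D4)}, $\overline G \cong \fA_6 \cong \PSL(2,9)$ respectively $\PGL(2,9) \cong \fA_6.2_2$, and $M(\fA_6) \cong C_6$ while $M(\PGL(2,9)) \cong C_2$ is known (the extra order-$3$ part of the multiplier of $\fA_6$ does not survive in $\PGL(2,9)$ because its Sylow $3$-normalizer structure differs); hence the $2'$-part of $M(\fA_6)$ is $C_3$ giving $h=3$, while for $\PGL(2,9)$ the $2'$-part is trivial giving $h = 1$. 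For the remaining cases \textbf{(D5)}--\textbf{(D8)}, where $\overline G$ is $\PSL(2,q)$ or $\PGL(2,q)$ with $q \neq 9$ a power of an odd prime, possibly extended by an odd-order cyclic group of field automorphisms $C_f$: the Schur multiplier of $\PSL(2,q)$ for $q \neq 9$ is $C_2$ (and $C_1$ for small exceptional $q$), so its $2'$-part is trivial; the multiplier of $\PGL(2,q)$ is likewise a $2$-group; and taking the semidirect product with the $2'$-group $C_f$ of field automorphisms does not introduce new odd torsion into ${\mathrm{H}}^2(-,k^\times)$ that would matter here—one checks via the inflation-restriction (five-term) exact sequence that the relevant cohomology remains a $2$-group. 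Thus $h = 1$ in all of \textbf{(D5)}--\textbf{(D8)}, completing the desired case distinction.

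The main obstacle I anticipate is handling cases \textbf{(D7)} and \textbf{(D8)} carefully: one must verify that extending $\PSL(2,q)$ or $\PGL(2,q)$ by the odd cyclic group $C_f$ of field automorphisms does not create a nontrivial $2'$-part in ${\mathrm{H}}^2(\overline G, k^\times)$. For this I would use the Lyndon--Hochschild--Serre spectral sequence (or just the inflation-restriction sequence) for the normal subgroup $\PSL(2,q) \trianglelefteq \overline G$ with quotient $\overline G/\PSL(2,q)$, noting that this quotient is solvable of order $2f$ or $f$ as appropriate, and that ${\mathrm{H}}^2$ of the pieces involving only $2$-groups or only the action on $\mathbb F_q$-structures contributes nothing odd; alternatively one can invoke the explicit computations of Schur multipliers of almost simple groups with socle $\PSL(2,q)$ available in the literature (e.g. the ATLAS or \cite{Su86}). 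The other mild subtlety is being precise about the passage from $M(\overline G)$ over $\mathbb C$ to ${\mathrm{H}}^2(\overline G, k^\times)$ over $k$ of characteristic $2$; this is routine given that $k$ is algebraically closed, so $k^\times$ is divisible with full $\ell$-power torsion for every odd $\ell$, and the universal coefficients / Künneth argument identifies ${\mathrm{H}}^2(\overline G, k^\times)$ with $\mathrm{Hom}(M(\overline G), k^\times) \cong M(\overline G)_{2'}$.
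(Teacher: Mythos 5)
Your proposal goes off the rails in case \textbf{(D4)}. You assert that $M(\PGL(2,9))\cong C_2$ and conclude $h=1$ for $\overline G\cong\PGL(2,9)$, but that is factually wrong and directly contradicts the statement you are trying to prove, which claims $h=3$ precisely when $\overline G\in\{\fA_6,\fA_7,\PGL(2,9)\}$. In fact $M(\PGL(2,9))\cong C_3$: the exceptional triple cover $3.\fA_6$ does extend to a group $3.\PGL(2,9)=3.\fA_6.2_2$, so the odd part of the multiplier survives. The heuristic you offer (``the extra order-$3$ part does not survive because the Sylow $3$-normalizer structure differs'') is not a proof and in this instance leads to the wrong answer. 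The paper handles this case by first applying Jones' theorem \cite[Theorem~3.1]{Jo74} to the perfect normal subgroup $\PSL(2,9)\trianglelefteq\PGL(2,9)$ with cyclic quotient, yielding $|M(\PGL(2,9))|\mid|M(\PSL(2,9))|=6$ so $h\in\{1,3\}$, and then settles $h=3$ by appealing to \cite[$A_6$ (mod 2)]{ModularAtlas}, where the blocks of the $3$-fold cover of $\fA_6.2_2$ are listed. You would need to replace your incorrect assertion by such an argument (or cite a reliable source for $M(\PGL(2,9))\cong C_3$) to repair the proof.

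The remaining cases are fine and essentially match the paper's route: both use the identification ${\mathrm{H}}^2(\overline G,k^\times)\cong M(\overline G)_{2'}$ (the paper cites \cite[Proposition~3.2]{Ya64}), both read $h$ off the Schur multiplier for $P$, $\fA_6$, $\fA_7$, and $\PSL(2,q)$ with $q\neq 9$. For cases \textbf{(D6)}--\textbf{(D8)} you propose a Lyndon--Hochschild--Serre (or inflation-restriction) argument, which is a reasonable alternative, but the paper's use of Jones' theorem — $|M(\overline G)|$ divides $|M(\PSL(2,q))|\cdot|M(\text{cyclic quotient})|$, with the second factor trivial — is cleaner and sidesteps having to analyze the spectral sequence differentials or worry about the $\mathrm{PGL}(2,3)\cong\fS_4$ degenerate case that the paper treats separately.
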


\begin{proof}
Since $k$ has characteristic $2$, it follows from \cite[Proposition~3.2]{Ya64}
(see also \cite[Lemma~5]{Ko82} and
\cite[Lemma~3.5.4(ii)]{NT89}) that
${\mathrm{H}}^2(\overline  G,k^\times)\cong M(\overline  G)_{2'}$,
where $M(\overline  G)_{2'}$ is the $2'$-part of 
$M(\overline  G)={\mathrm{H}}^2(\overline  G,\mathbb C^\times)$.
\par
If \textbf{(D1)} holds, then $h=1$ by \cite[Theorem 2.7.4]{NT89}. 
If \textbf{(D2)} or \textbf{(D3)} holds, then $h=3$ by 
\cite[p.10 and p.4, respectively]{Atlas}.\par
Assume \textbf{(D4)} holds. Since  $\overline  G/\PSL(2,9)$ is 
cyclic and $\PSL(2,9)$ is perfect, we know by \cite[Theorem 3.1]{Jo74} 
that $|M(\overline  G)|\,{\Big|}\,|M(\PSL(2,9))| = 6$.  So $h=1$ or 
$h=3$. Then \cite[$A_6$ (mod 2)]{ModularAtlas} yields $h=3$.\par
Next assume \textbf{(D5)} holds.
It is known that if $q\neq 9$ is a power of an odd prime, then 
$|M(\PSL(2,q))| = 2$ by a result of R.~Steinberg
in \cite[Theorem 4.9.1(ii)]{Ka85}. Hence, we have $h=1$.
\par
Next, consider the particular case that $q=3$ when \textbf{(D6)} 
or \textbf{(D8)} holds 
(note that $q\, {\not=}\, 3$ if  \textbf{(D7)} holds).
Assume first that \textbf{(D6)} occurs.
Then, in the former case, 
$\overline G\cong{\mathrm{PGL}}(2,3)\cong\mathfrak S_4$ 
and by Schur's result \cite[Theorem 4.3.8(i)]{Ka85}  
we have that $|M(\mathfrak S_4)|=2$, 
so that $h=1$.
Assume that \textbf{(D8)} occurs.
It follows from \cite[Theorem 3.1(i)]{Jo74} that
$$
|M(\overline G)|{\Big |}
|(M(\mathfrak S_4)|\cdot
|M(\mathfrak S_4/[\mathfrak S_4, \mathfrak S_4])| = 2\times 2=4.$$
Hence  we have $h=1$.
\par
Finally assume that \textbf{(D6)}, \textbf{(D7)} or \textbf{(D8)} 
holds with $q>3$.
Then
$$
\overline G/{\mathrm{PSL}}(2,q) \cong \begin{cases}
    C_2  & \text{if \textbf{(D6)} holds}, \\
    C_f   & \text{if \textbf{(D7)} holds}, \\
    C_{2f} & \text{if \textbf{(D8)} holds},
\end{cases}
$$
which, in particular, is cyclic in all three cases. Thus, 
as $\PSL(2,q)$ is perfect, 
it follows from \cite[Theorem 3.1(i)]{Jo74} that
$|M(\overline  G)|\,{\Big|}\,|M({\mathrm{PSL}}(2,q))| = 2$. 
Therefore we obtain $h=1$.
\end{proof}


\section{Torsion endo-trivial modules in the dihedral case: 
the basic examples} 
\label{sec:ETforGbar}
We now turn to the description of $TT(G)$ 
for groups $G$ with dihedral Sylow 2-subgroup of order at least $8$.  
First we investigate the case when $O_{2'}(G)$ is trivial and prove 
that torsion endo-trivial modules are always 
one-dimensional in this case.
Throughout this section we use the notations $G$, $P$ 
and $\overline G$ as in \S 5.

\begin{prop}\label{H=1}
If $O_{2'}(G)=1$, then $TT(G)=K(G)=X(G)$.
\end{prop}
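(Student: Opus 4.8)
The plan is to go through the eight cases \textbf{(D1)}--\textbf{(D8)} of Hypothesis~\ref{AssD}, since by Lemma~\ref{lem:TT(G)}(c) we have $TT(G)=K(G)$, and since $O_{2'}(G)=1$ means $\overline{G}=G$. The key observation is that $K(G)$ consists of classes of indecomposable trivial source endo-trivial $kG$-modules (Lemma~\ref{lem:TT(G)}(b)), and by Theorem~\ref{thm:torchar} such a module $V$ is endo-trivial precisely when $\chi_{\widehat{V}}(u)=1$ for every non-trivial $2$-element $u\in G$. So in each case I want to show that the only trivial source modules satisfying this character condition are one-dimensional, i.e. lie in $X(G)$.

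First I would dispose of the easy cases. In case \textbf{(D1)}, $G=P$ is a $2$-group, so $K(G)=X(G)=\{[k_G]\}$ trivially (or directly from Lemma~\ref{lem:TT(G)}(a), since for a $2$-group $P\cap x^{-1}Px=P\neq 1$). For the cases where $G$ has a strongly $2$-embedded-like reduction, I would instead use Lemma~\ref{lem:resinf}(b): since $N_G(P)$ contains a Sylow $2$-subgroup and $\Res^G_{N_G(P)}$ embeds $K(G)$ into $K(N_G(P))=X(N_G(P))$, it suffices to control which one-dimensional $kN_G(P)$-modules lift via the Green correspondence to genuine $kG$-modules; but more efficiently, I would exploit that $G=\overline{G}$ is now one of the explicit groups $\fA_7$, $\PSL(2,q)$, $\PGL(2,q)$, $\fA_6$, $\PGL(2,9)$, or an extension by $C_f$, and appeal to known character tables / Brauer trees. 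For the $\PSL(2,q)$ and $\PGL(2,q)$ families (cases \textbf{(D5)}, \textbf{(D6)}, and their $C_f$-extensions \textbf{(D7)}, \textbf{(D8)}), the principal block has a known structure and the relevant trivial source modules are well understood; one checks that a trivial source module with all $2$-singular character values equal to $1$ forces dimension $1$. The case $\fA_7$ (\textbf{(D2)}) and the $\fA_6$-related cases (\textbf{(D3)}, \textbf{(D4)}) can be handled by direct inspection of the (modular) character tables, using that $|G|_2=8$ here so the dimension of an indecomposable trivial source endo-trivial module is $\equiv 1\pmod 8$ by Remark~\ref{rem:dim}, and checking no such character besides the trivial one vanishes correctly on $2$-elements --- one should be slightly careful for $\fA_6$ because of the exceptional $9$-dimensional phenomenon flagged in Theorem~\ref{Main}(b), but that module is not of trivial source (it becomes torsion endo-trivial only over the triple cover $3.\fA_6$, where $O_{2'}\neq 1$), so over $\fA_6$ itself with $O_{2'}(G)=1$ the conclusion $K(\fA_6)=X(\fA_6)$ still holds, as noted to follow from \cite[Theorem 1.2]{CMN09}.

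The main obstacle I expect is the uniform treatment of the $\PSL(2,q)$ and $\PGL(2,q)$ families with arbitrary odd prime power $q$: one cannot just read a single character table, so I would need a genuinely structural argument. The cleanest route is to note that in these groups a Sylow $2$-subgroup $P$ is dihedral and self-normalizing up to a controlled extension, invoke that $K(G)$ injects into $X(N_G(P))$, and then argue that the Green correspondent from $N_G(P)$ back to $G$ of a nontrivial one-dimensional module is not a trivial source endo-trivial $kG$-module --- equivalently, that the corresponding candidate ordinary character (of degree $\equiv 1 \pmod{|G|_2}$) does not restrict trivially to all $2$-elements. For $\PSL(2,q)$ this can be seen from the explicit list of character degrees and their values on the involution(s); the extensions by $C_2$ or $C_f$ are then handled by a Clifford-theory / inflation argument, using that $\PSL(2,q)$ is perfect so $X(G)$ is already small and that $f$ is odd (Lemma~\ref{Multiplier} and the remark after Hypothesis~\ref{AssD} guarantee no extra $2$-cohomology intervenes). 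I would close by remarking that in every case the resulting $K(G)$ equals $X(G)$, and hence $TT(G)=K(G)=X(G)$ as claimed.
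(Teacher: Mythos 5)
Your framework is right (reduce to $K(G)\hookrightarrow X(N_G(P))$, use Lemma~\ref{lem:TT(G)} and Theorem~\ref{thm:torchar}), and cases \textbf{(D1)}, \textbf{(D2)} are handled essentially as the paper does (the paper cites \cite[Theorem 5.4]{CaTh00} and \cite[Theorem B(a)]{CMN09} rather than re-inspecting tables). But for \textbf{(D3)}--\textbf{(D8)} you miss the key structural observation that makes the paper's proof short: since $P$ is dihedral of order $\geq 8$, $\Aut(P)$ is a $2$-group, so $N:=N_G(P)=PC_G(P)=P\times O_{2'}(C_G(P))$. With $O_{2'}(G)=1$ one checks that $O_{2'}(C_G(P))=1$ in cases \textbf{(D3)}--\textbf{(D6)}, so $N=P$ and $X(N)=\{[k_N]\}$, forcing $K(G)=\{[k_G]\}$ immediately from the embedding $K(G)\hookrightarrow X(N)$; and in \textbf{(D7)}--\textbf{(D8)} one gets $N=P\times C_f$ with $X(N)\cong C_f\cong X(G)$, after which a quick comparison of Green correspondents (they are one-dimensional) gives $K(G)=X(G)$. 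No generic-character-table analysis of $\PSL(2,q)$ or $\PGL(2,q)$ is needed, and no appeal to Remark~\ref{rem:dim} or Theorem~\ref{thm:torchar} is required in the proof.

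More seriously, your proposed argument for the $\PSL/\PGL$ families is not merely harder --- it is actually incorrect in cases \textbf{(D7)} and \textbf{(D8)}. You propose to ``argue that the Green correspondent from $N_G(P)$ back to $G$ of a nontrivial one-dimensional module is not a trivial source endo-trivial $kG$-module.'' But when $\overline G\cong \PSL(2,q)\rtimes C_f$ or $\PGL(2,q)\rtimes C_f$ with $f>1$, the nontrivial one-dimensional $kN$-modules \emph{do} correspond under the Green correspondence to endo-trivial $kG$-modules, namely the $f-1$ nontrivial one-dimensional $kG$-modules (which are inflated from $G/\PSL(2,q)$). Showing these correspondents fail the endo-triviality criterion would prove $K(G)=\{[k_G]\}$, contradicting the desired $K(G)=X(G)\cong C_f\neq\{[k_G]\}$. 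The correct statement you need is that each Green correspondent is again one-dimensional, not that it fails to be endo-trivial; you should be comparing $X(N)$ with $X(G)$ rather than trying to rule the nontrivial elements of $X(N)$ out of $K(G)$.
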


\begin{proof}
By assumption, we have $G=\overline G$, thus we may go through 
the possibilities for $G$ according to Hypotheses~\ref{AssD}.
\par
 If $\textbf{(D1)}$ holds, i.e. $G=P$, then $TT(G)=\{[k_G]\}$ 
by \cite[Theorem~5.4]{CaTh00}. 
If $\textbf{(D2)}$ holds, i.e. $G=\fA_7$, then $TT(G)=\{[k_{G}]\}$ 
by \cite[Theorem~B(a)]{CMN09}.\par
Now assume that $G$ satisfies one of $\textbf{(D3)}$ to $\textbf{(D8)}$.  
Set $N:=N_{G}(P)$. As $P$ is dihedral of order at least 8, its 
automorphism group is a $2$-group, so that 
$$N=PC_{G}(P)=P\times O_{2'}(C_{G}(P))\,.$$
In particular, if $G$ satisfies $\textbf{(D3)}$, $\textbf{(D4)}$, 
$\textbf{(D5)}$ or $\textbf{(D6)}$, 
then $N=P$, and hence  $X(N)=\{[k_N]\}$,
so that Lemma~\ref{lem:TT(G)}(b) yields $TT(G)=K(G)=\{[k_G]\}=X(G)$.
\par
Finally assume that $G$ satisfies $\textbf{(D7)}$ or $\textbf{(D8)}$.
Then $N=P\times C_f$, so that $X(N)\cong C_f$. 
But clearly  $X(G)\cong C_{f}$ as well, so that the $kG$-Green 
correspondents of the one-dimensional $kN$-modules are 
all one-dimensional. Hence  
$$TT(G)=K(G)=X(G)\cong C_{f}$$
by Lemma~\ref{lem:TT(G)}(b).
\end{proof}

As a consequence, we see that any torsion endo-trivial 
module of a finite group with dihedral Sylow $2$-subgroup 
of order at least 8 which lies in the principal block has 
to be one-dimensional.

\begin{cor}\label{cor:TT0}
There is an isomorphism of groups $TT_{0}(G)\cong X_{0}(G)$.
\end{cor}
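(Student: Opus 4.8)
The plan is to reduce the statement to Proposition~\ref{H=1} by passing to the quotient $\overline{G} = G/O_{2'}(G)$. First I would recall that $TT_0(G)$ consists of classes $[V]$ with $V$ indecomposable endo-trivial lying in the principal block $B_0(G)$, and that $B_0(G)$ has $O_{2'}(G)$ in its kernel; hence every $kG$-module in $B_0(G)$ is inflated from a $k\overline{G}$-module, and the block it belongs to is the principal block $B_0(\overline{G})$. By Lemma~\ref{lem:resinf}(\ref{lem:resinf2}), inflation induces an injective homomorphism $\Inf_{\overline{G}}^G : T(\overline{G}) \to T(G)$, and this map restricts to an identification of $TT_0(\overline{G})$ with $TT_0(G)$: surjectivity onto $TT_0(G)$ is exactly the observation that any $V$ in the principal block is $\Inf_{\overline{G}}^G(W)$ for some $k\overline{G}$-module $W$, which is again indecomposable endo-trivial (Lemma~\ref{lem:resinf}(\ref{lem:resinf2})) and lies in $B_0(\overline{G})$. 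Likewise $X_0(G)$ is canonically identified with $X_0(\overline{G})$, since one-dimensional $kG$-modules have $O_{2'}(G)$ in their kernel and belong to the principal block precisely when they do after inflation.

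Next I would apply Proposition~\ref{H=1} to $\overline{G}$: since $O_{2'}(\overline{G}) = 1$, we get $TT(\overline{G}) = X(\overline{G})$. Intersecting with the principal block gives $TT_0(\overline{G}) = X_0(\overline{G})$ directly, because $X(\overline{G}) \cap B_0(\overline{G}) = X_0(\overline{G})$ by definition and $TT_0(\overline{G}) \leq TT(\overline{G})$. Combining this with the identifications of the previous paragraph yields
$$TT_0(G) \cong TT_0(\overline{G}) = X_0(\overline{G}) \cong X_0(G)\,,$$
as required.

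The only point requiring a little care — and the step I expect to be the main obstacle — is the clean statement that inflation from $\overline{G}$ to $G$ carries $TT_0(\overline{G})$ isomorphically onto $TT_0(G)$, i.e. that every indecomposable endo-trivial $kG$-module in $B_0(G)$ genuinely arises by inflation. This rests on the standard fact that a block $B$ of $kG$ with $O_{p'}(G)$ in its kernel is naturally a block of $k(G/O_{p'}(G))$ and the two are isomorphic (here for $B = B_0(G)$ and $p = 2$), together with Lemma~\ref{lem:resinf}(\ref{lem:resinf2}) which guarantees that endo-triviality is preserved and reflected by inflation across a normal $p'$-subgroup. Everything else is formal bookkeeping with the subgroups $X_0 \leq TT_0 \leq TT$ and the fact, already recorded in the excerpt, that $X(G) \cong (G/[G,G])_{p'} \cong (\overline{G}/[\overline{G},\overline{G}])_{p'} \cong X(\overline{G})$.
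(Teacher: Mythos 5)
Your proof is correct and takes essentially the same route as the paper's: both rest on the fact that $O_{2'}(G)$ acts trivially on $B_0(G)$, so that inflation (Lemma~\ref{lem:resinf}(c)) identifies $TT_0(G)$ with $TT_0(\overline{G})$ and $X_0(G)$ with $X_0(\overline{G})$, after which Proposition~\ref{H=1} applied to $\overline{G}$ finishes the job. One slip in your closing side remark: $X(G)\cong X(\overline{G})$ is \emph{not} true in general (take $G=C_3\times C_2$, $p=2$, so $X(G)\cong C_3$ while $X(\overline G)=1$), but your argument only uses $X_0(G)\cong X_0(\overline{G})$, which is correct precisely because one-dimensional modules \emph{in the principal block} have $O_{2'}(G)$ in their kernel.
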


\begin{proof}
Since $O_{2'}(G)$ acts trivially on the principal block, 
Lemma~\ref{lem:resinf}(c) yields 
$$TT_{0}(G) = \Inf_{G/O_{2'}(G)}^{G}\left( TT_{0}(G/O_{2'}(G)) \right)\,.$$ 

Now, by Proposition~\ref{H=1}, $TT_{0}(G/O_{2'}(G))=X_0(G/O_{2'}(G))$. 
The claim follows.
\end{proof}

Next we consider the triple covers of $\fA_6$, $\fA_7$ and $\PGL(2,9)$ 
whose Schur multipliers have non-trivial $2'$-parts 
as seen in Lemma~\ref{Multiplier}.
The next lemma also shows that  $TT(G)$ is not isomorphic 
to $TT(G/O_{2'}(G))$ via inflation in general.

\begin{lem}\label{lem:3A6&3A7}
\begin{enumerate}[\rm(a)]
  \item Let $H:=3.\fA_{6}$, then  
$TT(H) = \{[k_H], [9_1], [9_2]\}\cong \mathbb Z/3\mathbb Z$,
where $9_1$ and $9_2$ are mutually dual 
$9$-dimensional, faithful, simple and trivial source $kH$-modules.
  \item Let $G:=3.\fA_{7}$, then  $TT(G)= TT_0(G)=X(G)= \{[k_{G}]\}$. 
  \item Let $R:=3.{\mathrm{PGL}}(2,9)$, 
then  $TT(R)= TT_0(R)=X(R)= \{[k_{R}]\}$. 
\end{enumerate}  
\end{lem}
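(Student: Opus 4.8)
The plan is to treat the three triple covers separately, using in each case the character-theoretic criterion of Theorem~\ref{thm:torchar} together with the fact (Lemma~\ref{lem:TT(G)}(c), valid since the Sylow $2$-subgroups are dihedral of order $\geq 8$) that $TT = K$ consists exactly of trivial source endo-trivial modules. Recall that the unique Sylow $2$-subgroup $P$ of $\fA_6$, $\fA_7$ and $\PGL(2,9)$ up to conjugacy is dihedral of order $8$, and in each triple cover $3.X$ the central $C_3$ is an odd-order normal subgroup, so $3.X$ still has dihedral Sylow $2$-subgroup $P$ of order $8$; moreover $O_{2'}(3.X)=C_3$ and $(3.X)/C_3\cong X$. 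By Lemma~\ref{lem:TT(G)}(b), $TT(3.X)=K(3.X)\hookrightarrow K(N_{3.X}(P))=X(N_{3.X}(P))$.

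For part (a), $H=3.\fA_6$: first I would compute $N_H(P)$. Since $\Aut(P)$ is a $2$-group, $N_H(P)=P\times O_{2'}(C_H(P))$; one checks (from the structure of $\fA_6$, where $N_{\fA_6}(P)=P$) that $O_{2'}(C_H(P))=Z(H)\cong C_3$, so $X(N_H(P))\cong C_3$ and hence $|TT(H)|\mid 3$. To see that $TT(H)$ actually has order $3$, I would exhibit the two nontrivial trivial source endo-trivial modules explicitly: from the ordinary character table of $3.\fA_6$ (Atlas), $H$ has a pair of mutually dual faithful irreducible characters of degree $9$ which are the unique faithful $9$-dimensional irreducibles; reducing mod $2$ they remain irreducible (the relevant decomposition matrix shows the $9$-dimensional simple $2$-modular modules in these faithful blocks lift to these ordinary characters), and a Sylow argument (dimension $9\equiv 1\pmod 8$) together with Remark~\ref{rem:dim} forces them to have trivial source and vertex $P$. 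Finally I would verify the criterion of Theorem~\ref{thm:torchar}: all nontrivial $2$-elements of $H$ lie in a single class of involutions plus classes of elements of order $4$, and the character values of $\chi_{9_i}$ on all these $2$-singular classes equal $1$ — this is a direct table lookup. Hence $[9_1],[9_2]\in TT(H)$, they are distinct from $[k_H]$ and mutually inverse, so $TT(H)=\{[k_H],[9_1],[9_2]\}\cong\IZ/3\IZ$, and $X(H)=\{[k_H]\}$ since $H$ is perfect.

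For parts (b) and (c), $G=3.\fA_7$ and $R=3.\PGL(2,9)$: in both cases $G$ (resp.\ $R$) is again perfect or has trivial $2'$-part of the abelianization, so $X=\{[k]\}$; the strategy is to show $K(G)=\{[k_G]\}$ directly. Here I would again compute $N_G(P)=P\times O_{2'}(C_G(P))$; for $3.\fA_7$ one has $N_{\fA_7}(P)=P$ and $C_G(P)\cap O_{2'}(G)=Z(G)\cong C_3$, giving $X(N_G(P))\cong C_3$, so a priori $|K(G)|\mid 3$. To rule out the nontrivial classes, I would invoke Theorem~\ref{EndoTrivial}(b) with $H=O_{2'}(G)=C_3$: since $\overline G=\fA_7$ has $h=|{\mathrm H}^2(\fA_7,k^\times)|=3$ by Lemma~\ref{Multiplier}, any $[V]\in K(G)$ satisfies $V^{\otimes 3}\in X(G)+\Inf(K(\fA_7))$; but $K(\fA_7)=\{[k]\}$ by Proposition~\ref{H=1} (case \textbf{(D2)}), and $X(G)=\{[k_G]\}$, so $[V^{\otimes 3}]=[k_G]$. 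It then remains to exclude a genuine class of order $3$: here I would appeal to the character table of $3.\fA_7$ to check that there is simply no faithful irreducible character of $G$ of degree $\equiv 1\pmod 8$ that is constant equal to $1$ on all $2$-singular classes — equivalently, by Theorem~\ref{thm:torchar}, no nontrivial trivial source endo-trivial module exists — forcing $TT(G)=\{[k_G]\}$. The same argument applies verbatim to $R=3.\PGL(2,9)$, using that $h=|{\mathrm H}^2(\PGL(2,9),k^\times)|=3$ but $K(\PGL(2,9))=\{[k]\}$ by Proposition~\ref{H=1} (case \textbf{(D4)}), and a final inspection of the Atlas character table of $3.\PGL(2,9)$.

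The main obstacle is the explicit verification in part (a) that the two $9$-dimensional modules genuinely are trivial source modules satisfying the character criterion — i.e.\ pinning down the $2$-modular reduction behaviour and the vertex — and symmetrically, in parts (b) and (c), certifying the \emph{non-existence} of any nontrivial trivial source endo-trivial module; both reduce to careful reading of the (modular) Atlas data for $3.\fA_6$, $3.\fA_7$ and $3.\PGL(2,9)$, and one should double-check that the relevant faithful $2$-blocks have full defect (automatic by Remark~\ref{rem:dim}) and that the claimed character values on $2$-elements are correct. I expect no conceptual difficulty beyond this bookkeeping, since Theorem~\ref{EndoTrivial} already bounds the exponent of the putative extra torsion by $3$ in each case.
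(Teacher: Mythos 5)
The key gap is in part (a), where you assert that ``a Sylow argument (dimension $9\equiv 1\pmod 8$) together with Remark~\ref{rem:dim} forces them to have trivial source and vertex $P$.'' Remark~\ref{rem:dim} gives the implication in the \emph{opposite} direction: an indecomposable trivial source endo-trivial module has dimension $\equiv 1 \pmod{|G|_p}$. It does not say that a simple module of dimension $\equiv 1\pmod{8}$ must have trivial source, and this is false in general for blocks with dihedral defect group. Without establishing that $9_1, 9_2$ are trivial source modules, you cannot invoke Theorem~\ref{thm:torchar}, which applies only to trivial source modules. The paper circumvents this by a genuinely different route: it takes the maximal subgroup $M\cong C_3\times\fS_4$ of $H=3.\fA_6$ containing $N=N_H(P)$, so that $TT(M)=X(M)\cong\IZ/3\IZ$ with nontrivial elements $1b, 1b^*$, and then identifies $9_2$ as the Green correspondent $\mathfrak f_H^{-1}(1b)$ by inducing and decomposing $\chi_{1b}\uparrow_M^H = \chi_{6_1}+\chi_{9_2}$ with the aid of Scott's theorem and the $2$-decomposition matrix. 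Being the Green correspondent of a one-dimensional $kM$-module automatically makes $9_2$ trivial source, after which Theorem~\ref{thm:torchar} applies. Your proposal omits this step entirely.

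A secondary issue arises in parts (b)--(c): your plan to ``check that there is simply no faithful irreducible character of degree $\equiv 1\pmod 8$ taking value $1$ on all $2$-singular classes'' tacitly assumes that the lift of the relevant Green correspondent affords an \emph{irreducible} ordinary character, which need not hold a priori for a trivial source module of full vertex. The paper instead pins down the Green correspondent precisely: inducing $\chi_{9_2}$ from $H$ to $G=3.\fA_7$ gives $\chi_{15_4}+\chi_{24_1}+\chi_{24_3}$, the last two are defect-zero and hence projective, so $\mathfrak f_G^{-1}(1a)$ affords $\chi_{15_4}$, which does not take value $1$ on the involution class, settling (b). For (c), the paper avoids character-table bookkeeping altogether: since $3.\PGL(2,9)$ has a unique block of full defect (the principal block), Remark~\ref{rem:dim} gives $TT(R)=TT_0(R)$, and Corollary~\ref{cor:TT0} together with Proposition~\ref{H=1} finishes immediately. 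Your broad outline (bound $|K|$ by $|X(N)|=3$, then try to rule out the nontrivial classes) is sound, but the actual verification requires the Green-correspondent computation, not just a scan of the irreducible character table.
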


In the following proof, 
an ordinary irreducible character of degree $d$ of a group $G$  
is denoted by $\chi_{d}$, whereas irreducible Brauer characters are 
denoted by their degrees,  
and are identified with the corresponding simple $kG$-modules. 
Moreover  ordinary irreducible characters are labelled according 
to \cite[Decomposition Matrices]{ModularAtlas}. 
We note that the above result about $3.\fA_6$ 
and $3.\fA_7$ appears in \cite[Proposition~6.1]{LaMaz15II}, 
where it was obtained via a {\sf MAGMA} computation \cite{MAGMA},
while we give here a character-theoretic proof.

\begin{proof} First note that we may identify $H$ with a subgroup of $G$ and 
let $P\in{\mathrm{Syl}}_{2}(H)$, so that $P\in{\mathrm{Syl}}_{2}(G)$ as well. 
Then $P\cong D_{8}$ (the dihedral group of order $8$), 
$N:=N_{H}(P)=N_{G}(P)\cong C_3\times P$, 
and $N\leq H\leq G$. 
In particular, $X(N)\cong \mathbb Z/3\mathbb Z$. 
By Lemma~\ref{lem:TT(G)}(b), 
we need to determine whether the $kH$- and $kG$-Green 
correspondents of the two non-trivial elements 
$1a, 1a'\in X(N)$ are endo-trivial. 
So, set $\mathfrak f_H:= f_{(H,\, P,\, N)}$ and 
$\mathfrak f_G:= f_{(G,\, P,\, N)}$.
But $1a'=1a^{*}$, 
and endo-triviality is preserved by passage to the $k$-dual, 
hence we only need to determine whether $\mathfrak f_{H}^{-1}(1a)$ and 
$\mathfrak f_{G}^{-1}(1a)$ are endo-trivial modules.\par
Note that the group $3.\fA_{6}$ has a maximal 
subgroup $M\cong C_3\times \fS_{4}$ (indeed the product is direct 
because $M(\fS_4)\cong C_2$) containing $N$,
so that $N\leq M\leq H\leq G$. 
Hence 
$$TT(M)=X(M)\cong \mathbb Z/3\mathbb Z\,$$ and we denote 
by $1{b}$ and  $1{b}^{*}$ the two non-trivial elements of $X(M)$, 
which we identify with their ordinary characters. \par
\rm(a) 
We may assume that 
$\mathfrak f_{H}^{-1}(1a)\,\, {|}\, \,\, 1b{\uparrow}_{M}^{H}$. 
Then we calculate that 
$$ \chi_{1b}{\uparrow}_M^H = \chi_{6_{1}} + \chi_{9_{2}}\,.$$
Thus $\dim_{k}({\mathrm{End}}_{kH}(1b{\uparrow}_M^H)) = 2$
by making use of Scott's Theorem in \cite{Sc73} (see also 
\cite[II Theorem 12.4(i) and I Lemma 14.5]{Lan83}). 
Then it follows from the 2-decomposition matrix of $H$  
\cite[$A_6$(mod 2)]{ModularAtlas}
that 
$$
1b{\uparrow}_{M}^{H} = 9_2
\oplus \bigl(\begin{smallmatrix}3c \\ 3d\end{smallmatrix}\bigr)
$$
as $kH$-modules, where $3c$ and $3d$ 
are the two non-isomorphic simple modules of dimension $3$ 
belonging to the $2$-block containing ${9}_{2}$,
and   
$\bigl(\begin{smallmatrix}3c \\ 3d\end{smallmatrix}\bigr)$
is a uniserial $kH$-module with composition factors $3c$ and $3d$.
Therefore $\mathfrak f_{H}^{-1}(1b) = {9}_{2}$, 
which affords $\chi_{9_{2}}\in\Irr(H)$.
Hence $\mathfrak f_{H}^{-1}(1b)$
is endo-trivial by Theorem~\ref{thm:torchar} as it takes value $1$ 
on any non-trivial $2$-element of $H$, see \cite[p.5]{Atlas}.
As a consequence,  $TT(H)\cong X(N)\cong \mathbb Z/3\mathbb Z$.
\par
\rm(b)   
Now passing to $G$,
$\mathfrak f_{G}^{-1}(1a)\,\, {|}\,\,\, f_M^{-1}(1b){\uparrow}_{H}^{G}$,
where $f_M:= f_{(H,\, P,\, M)}$ 
(recall that, on the other hand, 
$\mathfrak f_G$ denotes the Green correspondence
with respect to $(G, P, N)$). 
We calculate that 
$$\chi_{9_{2}}{\uparrow}_H^G 
= \chi_{15_{4}} + \chi_{24_{1}} + \chi_{24_{3}} \,.$$ 
But 
$\chi_{24_{1}}, \, \chi_{24_{3}}\in\Irr(G)$ being 
defect-zero characters, we obtain that $\mathfrak f_{G}^{-1}(1a)$ 
affords $\chi_{15_{4}}\in\Irr(G)$,  
which does not take value~$1$ on the unique conjugacy 
class of involutions, see \cite[p.10]{Atlas}. 
Thus, by Theorem~\ref{thm:torchar},  
$\mathfrak f_{G}^{-1}(1a)$ and $\mathfrak f_{G}^{-1}(1a')$ are not 
endo-trivial modules, and we must have $TT(G)=\{[k_{G}]\}$.\par 
\rm(c)
By \cite[$A_6.2_2$ (mod 2)]{ModularAtlas}, 
$R$ has a unique block of full defect, namely the principal block.  
Therefore it follows from Remark~\ref{rem:dim} that $TT(R)=TT_0(R)$ 
and we conclude from Corollary~\ref{cor:TT0} and the proof of 
Proposition~\ref{H=1} that $TT(R)=X(R)=\{[k_R]\}$.
\end{proof}

\begin{cor}[See Remark~\ref{rem:3A6}.]\label{cor:3A6}
Set $H:= O_{2'}(G)$ and $\overline G:= G/H$.
Assume further that $\overline G\cong\mathfrak A_6$ and $G$ has a unique 
component $E:=E(G)\cong 3.\mathfrak A_6$. Then, the
following hold:
\begin{enumerate}[\rm(a)]
  \item $G=HE$, $[H,E]=1$ (and hence $G$ is a central product of
$H$ and $E$), and moreover $H\cap E=Z(E)\cong C_3$ and
$Z(G)\leq H=C_G(E)$.
  \item Let $W$ be a simple 
$kE$-module such that  $[W]\in K(E)$ and $\dim(W)=9$
as in Lemma~\ref{lem:3A6&3A7}(a).
Then, there exists
a simple and trivial source $kG$-module $V$ such that 
$V{\downarrow}_E \cong W\oplus\cdots\oplus W$ ($m$ summands) 
for a positive integer $m$.
Further, $V$ is endo-trivial if and only if $W$ extends to $V$.
In particular, if ${\mathrm{H}}^2(G/E,k^\times)=1$, then 
there exists a $9$-dimensional simple $kG$-module whose class is in $K(G)$.
\end{enumerate}
\end{cor}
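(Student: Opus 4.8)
\textbf{Proof plan for Corollary~\ref{cor:3A6}.}
The plan is to analyze the structure of $G$ using the hypothesis $\overline{G} := G/H \cong \mathfrak{A}_6$ together with the presence of the component $E \cong 3.\mathfrak{A}_6$, and then to apply our extension machinery from Section~\ref{sec:K(G)viaInflation}. For part (a), I would start from the fact that $E = E(G)$ is subnormal in $G$ with $E/Z(E) \cong \mathfrak{A}_6 \cong \overline{G}$, and that $H = O_{2'}(G)$ is a normal $2'$-subgroup. Since $Z(E) \cong C_3$ is a $2'$-group and $E$ is perfect, $Z(E)$ must lie in $O_{2'}(G) = H$, so $Z(E) \leq H \cap E$. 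Conversely $H \cap E$ is a normal $2'$-subgroup of the quasi-simple group $E$, hence central in $E$, so $H \cap E = Z(E) \cong C_3$. Counting orders, $|HE| = |H|\cdot|E|/|H\cap E| = |H|\cdot|E|/3 = |H|\cdot|\mathfrak{A}_6| = |G|$, so $G = HE$. Because $H$ is normal and $E$ is a component, $[H,E] \leq H \cap E \leq Z(E) \leq Z(E)$; a standard argument (components centralize normal subgroups they don't contain, e.g.\ via $[O_{2'}(G), E(G)] = 1$) upgrades this to $[H,E] = 1$, so $G$ is the central product $H \ast E$ amalgamating $Z(E)$. Finally $H \leq C_G(E)$ by $[H,E]=1$, and conversely $C_G(E) \cap E = Z(E)$ forces $C_G(E) = C_G(E) \cap HE = H(C_G(E)\cap E) = H Z(E) = H$ since $G = HE$; and $Z(G) \leq C_G(E) = H$ is then immediate.

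For part (b), let $W$ be a $9$-dimensional simple $kE$-module with $[W] \in K(E)$, as furnished by Lemma~\ref{lem:3A6&3A7}(a). The first task is to see that $W$ is $G$-invariant. Here I would invoke Lemma~\ref{GInvariantGeneral}: $G$ has no strongly $2$-embedded subgroups by Lemma~\ref{BenderSuzuki}, and one can check that the relevant endo-trivial module forces $G$-invariance — more directly, since $K(E) \cong \mathbb{Z}/3\mathbb{Z}$ is cyclic of order prime to the conjugation action and the two non-trivial classes $[W], [W^*]$ are swapped only by outer automorphisms, one verifies using the known automorphism action on $3.\mathfrak{A}_6$ that both are stable (equivalently, $G/HE$ acts trivially, but $G = HE$ anyway, so it suffices that $H$ acts trivially, which is automatic as $[H,E]=1$). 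So $W$ is $G$-invariant. Then apply Clifford theory / Fong--Reynolds together with Lemma~\ref{Schur}: there is a simple $kG$-module $V$ lying over $W$, and since $W$ is $G$-invariant, $V{\downarrow}_E$ is isotypic, $V{\downarrow}_E \cong W \oplus \cdots \oplus W$ ($m$ copies). That $V$ can be taken to be a trivial source module follows because $W$ is trivial source and vertices and sources are controlled under this central-product situation (the Sylow $2$-subgroup of $E$ equals that of $G$, so a vertex of $W$ is a vertex of $V$, and the source is inherited).

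The heart of part (b) is the endo-triviality criterion. By Theorem~\ref{thm:torchar}, $V$ is endo-trivial if and only if $\chi_{\widehat V}(u) = 1$ for every non-trivial $2$-element $u \in G$. Every non-trivial $2$-element of $G$ is $G$-conjugate into $E$ (as $P \in \mathrm{Syl}_2(E) = \mathrm{Syl}_2(G)$), and $\chi_{\widehat V}{\downarrow}_E = m\,\chi_{\widehat W}$, while $\chi_{\widehat W}(u) = 1$ for all non-trivial $2$-elements $u \in E$ because $[W] \in K(E)$ and $W$ is trivial source (Theorem~\ref{thm:torchar} applied to $E$). Hence $\chi_{\widehat V}(u) = m \cdot 1 = m$ for such $u$, so $V$ is endo-trivial precisely when $m = 1$, i.e.\ precisely when $W$ extends to $V$. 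Finally, if $\mathrm{H}^2(G/E, k^\times) = 1$, then by Lemma~\ref{Schur}(a) (with $N = E$, noting $W$ is $G$-invariant simple) the obstruction $2$-cocycle $\overline{\alpha}$ is trivial, so $W$ genuinely extends to a $kG$-module $V$; by the above $V$ is a $9$-dimensional simple endo-trivial $kG$-module, and $[V] \in TT(G) = K(G)$ by Lemma~\ref{lem:TT(G)}(c). The main obstacle I anticipate is the bookkeeping in part (a) to get $[H,E] = 1$ cleanly and to pin down $C_G(E) = H$ — these are standard facts about components and generalized Fitting subgroups (see \cite[Definition 6.6.8]{Su86} and surrounding results), but need to be cited carefully — together with the verification that $W$ is $G$-invariant, which rests on knowing the $\mathrm{Out}(3.\mathfrak{A}_6)$-action on $K(3.\mathfrak{A}_6)$; the trivial-source/vertex transfer under the central product is routine but should be spelled out.
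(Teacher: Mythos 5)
Your proof is correct, and for part (a) it takes a genuinely different route from the paper. To establish $[H,E]=1$ (equivalently $H\leq C_G(E)$), you invoke the standard structure-theoretic fact that a component commutes with any normal subgroup of $G$ not containing it, so $[O_{2'}(G),E(G)]=1$. The paper instead argues ad hoc: for $h\in H$ of odd order, conjugation $\phi_h\in\Aut(E)$ fixes $Z(E)\cong C_3$ pointwise and induces on $E/Z(E)\cong\mathfrak A_6$ an automorphism of odd order in a group with $|\mathrm{Out}(\mathfrak A_6)|=4$, hence inner; lifting back gives $hy_0\in C_G(E)$ for some $y_0\in E$, and since $C_G(E)\leq H$ one deduces $y_0\in H\cap E=Z(E)\leq C_G(E)$ and so $h\in C_G(E)$. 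Your route is more conceptual and applies verbatim to other groups; the paper's is self-contained and avoids quoting the component-centralizing lemma (which, incidentally, you should cite precisely — \cite[Definition 6.6.8]{Su86} is only the definition of $E(G)$, not the result; Aschbacher's ``Finite Group Theory'', 31.4, or the corresponding theorem in Suzuki, is what you need). For part (b) your argument is the same as the paper's (Clifford theory to get $V$ with $V{\downarrow}_E\cong W^{\oplus m}$, then Theorem~\ref{thm:torchar} to show $V$ is endo-trivial iff $m=1$), just spelled out more fully; the paper leaves those details to the reader. Two small points: your initial digression about $G$-invariance via $\mathrm{Out}(3.\mathfrak A_6)$ and Lemma~\ref{GInvariantGeneral} is unnecessary — as you yourself then observe, $G=HE$ with $[H,E]=1$ already forces $W$ to be $G$-invariant; and for the trivial-source claim the cleanest justification is that $V\mid (V{\downarrow}_E){\uparrow}^G = (W^{\oplus m}){\uparrow}^G$ (as $E$ contains a Sylow $2$-subgroup of $G$) and $W\mid k_P{\uparrow}^E$, so $V\mid k_P{\uparrow}^G$, rather than an appeal to ``vertices and sources are controlled''.
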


\begin{proof}
(a) Since $E\triangleleft G$ and $G/H$ is non-abelian simple,
we have $G=HE$. Then,
$$
\mathfrak A_6\cong G/H=HE/H\cong E/(H\cap E)
=(3.\mathfrak A_6)/(H\cap E).
$$
Hence $C_3\cong H\cap E=Z(E)$. We have also $Z(G)\leq H$
since $Z(G)\triangleleft G$ and $G/H$ is non-abelian simple.
Further, if $G=HC_G(E)$, then $[P,E]=1$, a contradiction,
so that $C_G(E)\leq H$ since $C_G(E)\triangleleft G$ and
$G/H$ is non-abelian simple. 

Next, we claim that $H\leq C_G(E)$.
Take any $h\in H$, and let $\phi_h$ be an element of 
${\mathrm{Aut}}(E)$ given by $y\mapsto h^{-1}yh$
for $y\in E$. Since $|Z(E)|=3$, $\phi_h$ acts trivially on 
$Z(E)$, and hence we can consider that
$\phi_h\in{\mathrm{Aut}}(E/Z(E))={\mathrm{Aut}}(\mathfrak A_6)$.
Since $|h|$ is odd and $|{\mathrm{Out}}(\mathfrak A_6)|=4$
by \cite[p.4]{Atlas}, we know that 
$\phi_h$ is an inner automorphism of $\mathfrak A_6$, and hence
$\phi_h$ is an inner automorphism of $E$. This implies that
there is an element $y_0\in E$ with $hy_0\in C_G(E)$.
Since we know already that $C_G(E)\leq H$,
we have $y_0\in H$, so that $y_0\in H\cap E=Z(E)\leq C_G(E)$.
Therefore $h\in C_G(E)$.
\par
(b) The first part follows  easily from (a) and the Clifford Theorem. 
Then, the second part follows from Theorem~\ref{thm:torchar}. 
The final part follows immediately.
\end{proof}


\section{Torsion endo-trivial modules in the dihedral case: 
Proof of Theorem~\ref{Main}}

We now turn to the general case and prove 
Theorem~\ref{Main} of the introduction.
Throughout this section we use the notations $G$, $P$ and $\overline G$
as in \S 5, and further set $H:=O_{2'}(G)$, and hence $\overline G:=G/H$ 
and $\overline P:=HP/H\cong P$.

\begin{prop}\label{FirstKeyLemma}
Let $V$ be an indecomposable endo-trivial $kG$-module such 
that $[V]\in K(G)$. Then the following hold:
 \begin{enumerate}
    \item[{\rm{(a)}}] If $\overline{G}\ncong \fA_6$, then $\dim_k V=1$.
        \item[{\rm{(b)}}] If $\overline{G}\cong \fA_6$ and 
$\dim_k V\neq 1$, then $\dim_k V=9$, $V$ is simple,
$[V^{\otimes 3}]\in X(G)$.
\end{enumerate}
\end{prop}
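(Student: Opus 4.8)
The strategy is to combine the structural reduction of Theorem~\ref{EndoTrivial} with the explicit computations for $\overline{G}$ carried out in Proposition~\ref{H=1} and Lemma~\ref{lem:3A6\&3A7}. First I would invoke Lemma~\ref{BenderSuzuki} to guarantee that $G$ has no strongly $2$-embedded subgroups, and note that the $2$-rank of $G$ is~$2\geq 2$ since $P$ is dihedral of order $\geq 8$; thus the hypotheses of Theorem~\ref{EndoTrivial} are satisfied for the normal $2'$-subgroup $H=O_{2'}(G)$. Applying Theorem~\ref{EndoTrivial}(a) to the given $V$ with $[V]\in K(G)$, we obtain a $2$-cocycle $\overline{\alpha}\in{\mathrm{Z}}^2(\overline{G},k^\times)$, a one-dimensional $k^{\alpha}G$-module $1b$, and a $k^{\overline{\alpha}^{-1}}\overline{G}$-module $Z$ with $V\cong 1b\otimes_k\Inf_{\overline{G}}^{G}(Z\text{'s inflation})$; set $n:=|[\overline{\alpha}]|$, the order of $[\overline{\alpha}]$ in ${\mathrm{H}}^2(\overline{G},k^\times)$. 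By Lemma~\ref{Multiplier}, $n$ divides $h=|{\mathrm{H}}^2(\overline{G},k^\times)|$, which is $3$ if $\overline{G}\in\{\fA_6,\fA_7,\PGL(2,9)\}$ and $1$ otherwise.

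\textbf{Case $n=1$.} Here $\alpha$ and $\overline{\alpha}$ are coboundaries, so $1b\in X(G)$ and $W:=\Inf_{\overline{G}}^{G}(Z)$ is an honest endo-trivial $kG$-module inflated from $\overline{G}$; moreover $[W]=[V]-[1b]\in K(G)$, so by Lemma~\ref{lem:resinf}(c) (or Lemma~\ref{lem:TT(G)}(b)) $W$ corresponds to an element of $K(\overline{G})\leq TT(\overline{G})$. But Proposition~\ref{H=1} gives $TT(\overline{G})=K(\overline{G})=X(\overline{G})$, so $Z$ (up to projectives) is one-dimensional, whence $\dim_k V=1$. This disposes of every $\overline{G}$ with $h=1$, and in particular of all of \textbf{(D1)}, \textbf{(D5)}--\textbf{(D8)}, as well as of the case $\overline{G}\cong\PGL(2,9)$ via Lemma~\ref{lem:3A6\&3A7}(c) if one prefers (though $h=3$ there, the block-theoretic argument still forces dimension one). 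Actually for $\overline{G}\in\{\fA_7,\PGL(2,9)\}$ one argues instead as below.

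\textbf{Case $\overline{G}\cong\fA_7$ or $\PGL(2,9)$, $n=3$.} Apply Theorem~\ref{EndoTrivial}(b): $[V^{\otimes 3}]=[1c]+[W^{\otimes 3}]$ with $[1c]\in X(G)$ and $W^{\otimes 3}=\Inf_{\overline{G}}^{G}(Z^{\otimes 3})$ a trivial source endo-trivial $kG$-module, so $[W^{\otimes 3}]\in\Inf_{\overline{G}}^{G}(K(\overline{G}))$. Now I would pull this back one further level: since $G$ has $\overline{G}\cong\fA_7$ (resp.\ $\PGL(2,9)$) as a central-type-free quotient by $H$, and the relevant central extension that could support the twisted module $Z$ is $3.\fA_7$ (resp.\ $3.\PGL(2,9)$), Lemma~\ref{lem:3A6\&3A7}(b),(c) shows $TT(3.\fA_7)=TT(3.\PGL(2,9))=\{[k]\}$, forcing $Z$ (modulo projectives) one-dimensional and hence $\dim_k V=1$. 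This is case~(a) for these $\overline{G}$.

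\textbf{Case $\overline{G}\cong\fA_6$, $n\in\{1,3\}$.} If $n=1$ the first case gives $\dim_k V=1$. If $n=3$, then $Z$ is a genuinely twisted module over $k^{\overline{\alpha}^{-1}}\fA_6$ with $[\overline{\alpha}]$ of order~$3$; the twisted group algebra $k^{\overline{\alpha}^{-1}}\fA_6$ is (Morita equivalent to, or isomorphic to a block of) the group algebra of the triple cover $3.\fA_6$ on which the center acts by a faithful character. By Lemma~\ref{lem:3A6\&3A7}(a), $TT(3.\fA_6)=\{[k],[9_1],[9_2]\}$, and the faithful endo-trivial modules there are precisely the $9$-dimensional simple modules $9_1,9_2$. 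Hence $Z$, being projective-free endo-trivial and faithful over the center, is $9$-dimensional and simple, so $W=\Inf(Z)$ is $9$-dimensional simple; since $1b$ is one-dimensional, $V\cong 1b\otimes_k\Inf_{\overline{G}}^{G}(Z)$ is $9$-dimensional and simple. Finally $[V^{\otimes 3}]=[1c]+[W^{\otimes 3}]$ with $[W^{\otimes 3}]=\Inf_{\overline{G}}^{G}([Z^{\otimes 3}])$; but $[Z]$ has order dividing $3$ in $TT(3.\fA_6)\cong\IZ/3\IZ$, so $Z^{\otimes 3}$ is trivial-source one-dimensional, giving $[W^{\otimes 3}]\in X(G)$ and therefore $[V^{\otimes 3}]=[1c]+[W^{\otimes 3}]\in X(G)$. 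This is case~(b).

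\textbf{Main obstacle.} The routine parts are bookkeeping; the genuinely delicate point is making rigorous the passage from a module $Z$ over the \emph{twisted} group algebra $k^{\overline{\alpha}^{-1}}\overline{G}$ to a module over the honest group algebra of a central extension $n.\overline{G}$ (here $3.\fA_6$, $3.\fA_7$, $3.\PGL(2,9)$), and checking that endo-triviality of $W$ transfers to the corresponding statement about $TT$ of that central extension --- i.e.\ identifying $[W^{\otimes n}]\in\Inf_{\overline G}^G(K(\overline G))$ as actually forcing $Z$ into the list computed in Lemma~\ref{lem:3A6\&3A7}. One must be careful that the block of the central extension picked out by $[\overline{\alpha}]$ is exactly one whose simple endo-trivial modules have been enumerated, and that projective summands introduced when $n>1$ do not affect the dimension count modulo the relevant powers of $2$ (Remark~\ref{rem:dim}). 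Once that identification is in place, the dimension $9$ and the relation $[V^{\otimes 3}]\in X(G)$ drop out immediately from $TT(3.\fA_6)\cong\IZ/3\IZ$.
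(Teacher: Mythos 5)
Your proposal follows the same route as the paper: decompose $V\cong 1b\otimes_k W$ via Theorem~\ref{EndoTrivial}(a), dispatch the cases where ${\mathrm{H}}^2(\overline G,k^\times)=1$ via Theorem~\ref{cor:KXK} and Proposition~\ref{H=1}, and for $\overline G\in\{\fA_6,\fA_7,\PGL(2,9)\}$ pass to the triple cover $\widetilde G=3.\overline G$ and invoke Lemma~\ref{lem:3A6&3A7}. The conclusions you extract for $\fA_6$ (dimension~$9$, simplicity, $[V^{\otimes 3}]\in X(G)$) are the right ones.

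However, there is a genuine gap, and you have located it yourself without closing it. To apply Lemma~\ref{lem:3A6&3A7}, you must know that the $k\widetilde G$-module $\widetilde W$ corresponding to the twisted module $W$ (via the Schur correspondence between $k^{\overline\alpha^{-1}}\overline G$-modules and faithful $k\widetilde G$-modules) actually has its class in $K(\widetilde G)$ --- i.e.\ $\widetilde W$ is a trivial source \emph{endo-trivial} $k\widetilde G$-module. Your assertion that ``$Z$ is projective-free endo-trivial'' is precisely what needs proof, and nothing in Theorem~\ref{EndoTrivial}(a) gives it directly, since $W$ is a twisted module and endo-triviality is not an a priori notion over a twisted group algebra. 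The paper fills this in as follows: since $[\alpha\!\downarrow_{H\rtimes P}]=1$, the factorization $V\!\downarrow_{H\rtimes P}=1b\!\downarrow\otimes_k W\!\downarrow$ holds over the genuine group algebra $k(H\rtimes P)$; $[V\!\downarrow_{H\rtimes P}]\in K(H\rtimes P)$ because $[V]\in K(G)$, and subtracting $[1b\!\downarrow]$ gives $[W\!\downarrow_{H\rtimes P}]\in K(H\rtimes P)=TT(H\rtimes P)$. Since $H$ acts trivially on $W$, Lemma~\ref{lem:resinf}(c) yields $[W\!\downarrow_{\overline P}]\in TT(\overline P)=\{[k_{\overline P}]\}$, and since $\widetilde W\!\downarrow_{\widetilde P}\cong W\!\downarrow_{\overline P}$ via $\widetilde P\cong\overline P$, Lemma~\ref{lem:resinf}(a) and Lemma~\ref{lem:TT(G)}(b) give $[\widetilde W]\in K(\widetilde G)$. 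Only with this identification in hand can you read off $\widetilde W\in\{k,9_1,9_2\}$ from Lemma~\ref{lem:3A6&3A7}(a) (or $\widetilde W=k$ in the $3.\fA_7$ and $3.\PGL(2,9)$ cases). The paper also proves simplicity of $V$ by the Morita equivalence between the block of $V$ and the block of $\widetilde W$, rather than by your tensor-with-a-one-dimensional argument; both are valid, but the Morita route makes the block correspondence that you gestured at (``the block of the central extension picked out by $[\overline\alpha]$'') explicit.
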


\begin{proof}
Since $G$ has no strongly $2$-embedded subgroups 
by Lemma~\ref{BenderSuzuki},
Theorem~\ref{EndoTrivial}(a) yields that there exist a 
$2$-cocycle $\overline\alpha\in{\mathrm{Z}}^2(\overline G,k^\times)$,
a one-dimensional $k^{\alpha}G$-module $1b$  for  $\alpha:={\mathrm{Inf}}_{\overline G}^G(\overline\alpha)$
and a $k^{ {\overline\alpha} ^{-1}}\overline G$-module $W$, 
which we may regard as a $k^{\alpha^{-1}}G$-module 
via inflation from $\overline{G}$ to $G$, such that
$$  V \ \cong \ 1b\otimes_k W \,.$$
Now, we go through the possibilities for $\overline{G}$ 
according to Hypotheses~\ref{AssD} and compute $\dim_k(V)$ in each case.\par
To start with, if $G$ satisfies one of the hypotheses 
$\textbf{(D1)}$, $\textbf{(D5)}$, $\textbf{(D6)}$, 
$\textbf{(D7)}$, or $\textbf{(D8)}$, then 
${\mathrm{H}}^2(\overline G,k^\times)=1$ by Lemma~\ref{Multiplier}. 
Therefore Theorem~\ref{cor:KXK} yields 

$$K(G)=X(G)+\Inf_{\overline{G}}^{G}(K(\overline{G}))\,.$$
Besides $K(\overline{G})=X(\overline{G})$ by Proposition~\ref{H=1}. 
In consequence  $K(G)=X(G)$ and it follows that $\dim_k(V)=1$ 
in all cases.
\par
Hence we may assume that $G$ satisfies one 
of $\textbf{(D2)}$, $\textbf{(D3)}$, or $\textbf{(D4)}$. 
If $[\overline\alpha]$ is trivial, then by the same argument 
as above we obtain $\dim_k(V)=1$. Therefore we assume 
from now on that $[\overline\alpha]$ is non-trivial and  it 
follows from Lemma~\ref{Multiplier} that $|[\overline\alpha]|=3$.  
Then there exists a non-split central extension 
$$
(Z): \ \ 
1\rightarrow \widetilde Z\rightarrow 
\widetilde G\rightarrow \overline  G\rightarrow 1\,
$$
where $\widetilde Z\cong C_3$,  and we write 
$\widetilde G = 3.{\overline  G}$,  
the triple cover of $\overline  G$. Then it follows from Theorems of Schur
\cite[p.214, and Theorems 3.5.21 and 3.5.22]{NT89} that
the module $W$ over $k^{\overline\alpha^{-1}}\overline  G$ corresponds to
an indecomposable  $k\widetilde G$-module $\widetilde W$ such that 
$\widetilde W=W$ as $k$-vector spaces,  and moreover, 
if $\widetilde P\in{\mathrm{Syl}}_2(\widetilde G)$, then 
$W{\downarrow}^{k^{ {\overline\alpha} ^{-1}}\overline G}_{k\overline P} 
\cong \widetilde W{\downarrow}_{k\widetilde P}$
as $k\widetilde P$- (and also as $k\overline  P$-) 
modules via the canonical isomorphism 
$\widetilde P\cong \overline  P\cong P$.
We claim that $[\widetilde W]\in K(\widetilde G)$.
Indeed, since $[\alpha{\downarrow}_{H\rtimes P}] = 1$
by Theorem~\ref{EndoTrivial}(a) and its proof, 
we have
$$
V{\downarrow}_{H\rtimes P} 
  = (1b){\downarrow}_{k(H\rtimes P)}
   \otimes_k W{\downarrow}_{k(H\rtimes P)}
$$
as a tensor product of (non-twisted) $k(H\rtimes P)$-modules.
By Lemma~\ref{lem:TT(G)}(b), $[V{\downarrow}_{H\rtimes P}]\in K(H\rtimes P)$ 
since $[V]\in K(G)$ and $\Res^G_{H\rtimes P}$ 
is a group homomorphism. 
Therefore, since $K(H\rtimes P)$ is a subgroup of $T(H\rtimes P)$, we have
$$
[W{\downarrow}_{k(H\rtimes P)}]
=[V{\downarrow}_{H\rtimes P} ]-[(1b){\downarrow}_{k(H\rtimes P)}]
\in K(H\rtimes P)=TT(H\rtimes P)\,,
$$
where the latter equality of groups holds by Lemma~\ref{lem:TT(G)}(c).
Thus, as $H$ acts trivially on~$W$, it follows from 
Lemma~\ref{lem:resinf}(c) that  
$$
[W{\downarrow}_{k((H\rtimes P)/H)}] \, \in \, TT((H\rtimes P)/H)=TT(\overline P)=K(\overline P)\,,
$$
where, again, the latter equality of groups holds by Lemma~\ref{lem:TT(G)}(c).
But $K(\overline{P})=\{[k_{\overline{P}}]\}$ 
by definition since $\overline{P}$ is a $2$-group. 
Thus, $[W{\downarrow}_{\overline P}]=[k_{\overline P}]$, and hence 
$[\widetilde W{\downarrow}_{\widetilde P}\,]=[k_{\widetilde P}]$, 
so that by  Lemma~\ref{lem:resinf}(a), $\widetilde W$ 
itself is a trivial source endo-trivial $k\widetilde G$-module, 
that is,
 $[\widetilde W\,] \in K(\widetilde G)$.\par
Now assume that $G$ satisfies $\textbf{(D4)}$, 
i.e. $\overline  G \cong{\mathrm{PGL}}(2,9)$. 
In this case, $\widetilde G=3.{\mathrm{PGL}}(2,9)$.
Since $\widetilde W$ is an indecomposable endo-trivial module, 
Lemma~\ref{lem:3A6&3A7}(c) yields 
$\widetilde W = k_{\widetilde  G}$. Thus 
$$\dim_k V=\dim_k(1b\otimes_k W)=\dim_k  W=\dim_k \widetilde W=1\,.$$

Next assume that $G$ satisfies $\textbf{(D2)}$, i.e. 
$\overline  G \cong \fA_7$. In this case, $\widetilde G=3.\fA_7$.
By the above, the $k\widetilde G$-module $\widetilde W$ is  
indecomposable endo-trivial 
such that  $[\widetilde W]\in K(\widetilde G)$.
Thus, Lemma~\ref{lem:3A6&3A7}(b) yields $\widetilde W=k_{\widetilde G}$, 
so that $\dim_k V=\dim_k W=\dim_k \widetilde W=1$.\par
Finally assume that $G$ satisfies $\textbf{(D3)}$, 
i.e. $\overline  G \cong \fA_6$, so that $\widetilde G=3.\fA_6$.
Since $\widetilde W$ is  indecomposable endo-trivial 
and  $[\widetilde W]\in K(\widetilde G)$, 
Lemma~\ref{lem:3A6&3A7}(a) implies that  
$\widetilde W\in\{k_{\widetilde G}, \, 9_1, \, 9_2 \}$ 
(where $9_1$ and $9_2$ are as in Lemma \ref{lem:3A6&3A7}(a)) 
and therefore is a simple module.
Hence, again, we compute 
$$
\dim_k V=\dim_k(1b\otimes_k W)
=\dim_k W=\dim_k \widetilde W\in\{1,9\}\,.
$$

Next, we claim that $V$ is a simple module as well. 
We may assume, without loss of generality, that $\widetilde W=9_1$,
which lies in one of the two non-principal 
$2$-blocks of full defect of $3.\fA_7$. 
So let $B$ and $\widetilde B$ be the blocks of $kG$ and $k\widetilde G$,
respectively, to which $V$ and $\widetilde W$ belong.
Then, $B$ and $\widetilde B$ correspond to each other by the
Morita equivalence given by the result of Morita
in \cite[Lemma 2]{Ko82} (see also \cite{Mo51} 
and \cite[Theorem 5.7.4]{NT89}), 
and $V$ and $\widetilde W$ correspond
to each other via this Morita equivalence.
Hence $V$ is a simple $kG$-module
since $\widetilde W$ is simple.\par
Now, by Theorem~\ref{EndoTrivial}(b), we have that 
$V^{\otimes 3}=(1b)^{\otimes 3}\otimes_kW^{\otimes 3}$ is 
the tensor product of a (non-twisted) $kG$-module 
$(1b)^{\otimes 3}$ and a (non-twisted) $kG$-module 
$W^{\otimes 3}$ such that  
$[W^{\otimes 3}]\in \Inf_{\overline{G}}^{G}(K(\overline{G}))$. 
Since $\overline{G}=\fA_6$, we have  
$K(\overline{G})=\{[k_{\overline{G}}]\}$ by 
Proposition~\ref{H=1}. Therefore 
$[W^{\otimes 3}]=[k_{G}]$, and hence 
$[V^{\otimes 3}]=[(1b)^{\otimes 3}]\in X(G)$.
\par
\end{proof}

\begin{proof}[Proof of Theorem~\ref{Main}]
Since the Sylow $2$-subgroups of $G$ are dihedral of order
at least $8$, 
Lemma~\ref{lem:TT(G)}(c) yields $TT(G)=K(G)$. 
Thus the claims follow from Proposition~\ref{FirstKeyLemma}.
\end{proof}

\begin{ex}\label{ex:CentralProduct}
We now give an example of a group $G$ with a dihedral Sylow 
$2$-subgroup $P$ of order $8$, 
for which there exist classes $[V]\in TT(G)$ such that 
$$[V^{\otimes 3}]\in X(G)\!\setminus\! \{[k_G]\}\,.$$
We put ourselves in the situation of Corollary \ref{cor:3A6}
and consider the following example. We define
$G$ to be the central product defined by 
$$G:=C_9\ast 3.\fA_6\cong (C_9\times 3.\fA_6)/C_3\,,$$
so that $\overline{G}:=G/O_{2'}(G)\cong\fA_6$, 
but $H^2(\overline{G},k^{\times})\cong C_3$.  
It easily follows that $X(G)\cong \IZ/3\IZ$ and  
$X(N_G(P))\cong \IZ/9\IZ$. Then, using {\sf GAP} \cite{CTblLib}, 
we compute the following. First, inducing the nine 
linear characters of $N_G(P)$ to $G$, we find that 
the $kG$-Green correspondents of the associated 
$kN_G(P)$-modules afford the three linear characters 
corresponding to $X(G)\cong \IZ/3\IZ$ (this part is obvious) and six $9$-dimensional 
ordinary characters $\chi_{9_1},\ldots,\chi_{9_6}$, reducing 
modulo $2$ to $9$-dimensional simple $kG$-modules 
$9_1,\ldots, 9_6$, respectively 
(all lying in pairwise distinct blocks). 
Then Theorem~\ref{thm:torchar} ensures that these modules 
are all endo-trivial since their ordinary characters take 
value one on any non-trivial $2$-element of $G$. Hence 
we conclude that 
$$TT(G)=K(G)\cong X(N_G(P))\cong \IZ/9\IZ.$$
Finally we see that the characters 
$\chi_{\widehat 9_i}\otimes_K \chi_{\widehat 9_i}
\otimes_K \chi_{\widehat9_i}$ 
for each $1\leq i\leq 6$ have no trivial constituents, 
so that $[(9_i)^{\otimes 3}] \in X(G)\!\setminus\! \{[k_G]\}$ 
for each $1\leq i\leq 6$.
\end{ex}


\section{Groups with Klein-four Sylow 2-subgroups revisited}
\label{ssec:k4}

The purpose of this section is to prove Theorem~\ref{MainTheoremK}.
Throughout this section we assume that $P$ is a Klein-four 
Sylow $2$-subgroup of $G$, that is $P\cong C_2\times C_2$, and let $N:=N_G(P)$.
Furthermore, we use the notations $H:=O_{2'}(G)$, $\overline G:=G/H$ and
$\overline  P:=(HP)/H=(H\rtimes P)/H\cong P$.

\begin{lem}\label{G_C2xC2}
One of the following holds:
\begin{enumerate}
   \item[{\rm{(1)}}]
$\overline G\cong P$.
   \item[{\rm{(2)}}]
$\overline G\cong
\mathfrak A_4 \cong{\mathrm{PSL}}(2,3)$,
   \item[{\rm{(3)}}] 
$\overline G\cong {\mathrm{PSL}}(2,q)\rtimes C_f$, where $q=r^m$ is a power of an odd prime $r$ 
with $3<q\equiv\pm 3$ {\rm{(mod 8)}}, $f$ is an odd integer, 
and  $C_f\leq{\mathrm{Gal}}(\mathbb F_q/\mathbb F_r)\cong C_m$.
\end{enumerate}
\end{lem}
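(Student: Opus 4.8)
The plan is to reduce the statement to the Gorenstein--Walter classification already invoked in \S 5 for the dihedral case, and then to identify, among the groups whose Sylow $2$-subgroup is Klein-four, exactly those that can actually occur. First I would recall that $\overline{G}=G/O_{2'}(G)$ has trivial $O_{2'}$ (by maximality of $O_{2'}(G)$), and that the Gorenstein--Walter theorem classifies finite groups with \emph{dihedral} Sylow $2$-subgroups. The Klein-four group $C_2\times C_2$ is the degenerate dihedral group $D_4$ of order $4$, so $G$ falls under the same theorem: $\overline{G}$ is $P$, or $\mathfrak A_7$, or a subgroup of $\PgammaL(2,q)$ containing $\PSL(2,q)$ for $q$ an odd prime power. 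Thus the whole proof is a case analysis pinning down which of these have a Sylow $2$-subgroup of order exactly $4$ that is Klein-four (rather than order $\geq 8$).

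Next I would dispose of the two easy cases. The group $\mathfrak A_7$ has Sylow $2$-subgroups of order $8$ (dihedral), so it is excluded. For $\overline{G}=P$ itself we get case (1). For the $\PSL(2,q)$ family, the standard fact is that a Sylow $2$-subgroup of $\PSL(2,q)$ is dihedral of order $(q^2-1)_2$ (for $q$ odd), hence Klein-four precisely when $(q^2-1)_2=4$, i.e. $q\equiv \pm 3\pmod 8$. When $q=3$ this gives $\PSL(2,3)\cong\mathfrak A_4$, case (2); when $q>3$ with $q\equiv\pm 3\pmod 8$ we are in the range of case (3), provided we check the extensions. Here I would argue as in \S 5: $\PgammaL(2,q)\cong\PGL(2,q)\rtimes \mathrm{Gal}(\IF_q/\IF_r)$ with $q=r^m$, and $\PGL(2,q)$ has Sylow $2$-subgroup of order $(q^2-1)_2\cdot 2$, which is at least $8$; so $\overline{G}$ cannot contain $\PGL(2,q)$, forcing $\overline{G}\cong\PSL(2,q)\rtimes C_f$ with $C_f\leq\mathrm{Gal}(\IF_q/\IF_r)\cong C_m$. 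Since adjoining $C_f$ must not enlarge the Sylow $2$-subgroup beyond Klein-four, and since a cyclic field-automorphism group of even order would (via the $\PGL$-containment or directly via the structure of $\PgammaL$) produce a larger $2$-group, $f$ must be odd; this is exactly the parity argument citing \cite[Chapter~6~(8.9)]{Su86} used for the dihedral hypotheses \textbf{(D7)}--\textbf{(D8)}.

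The one subtlety I would be careful about is the lower bound $3<q$ in case (3): the value $q=3$ already appears separately as $\mathfrak A_4$ in case (2), so I want the three cases to be genuinely disjoint, which is why case (3) is stated with $q>3$. I would also double-check that no sporadic small-$q$ coincidences (e.g. $\PSL(2,5)\cong\mathfrak A_5$, which has $q=5\equiv 5\pmod 8$, Klein-four Sylow) are being missed: $\mathfrak A_5$ is simply the $f=1$ instance of case (3) with $q=5$, so it is covered. The main obstacle is not conceptual but bookkeeping: making sure the condition ``$C_f\leq\mathrm{Gal}(\IF_q/\IF_r)$ with $f$ odd'' is precisely equivalent to ``$\overline{G}\leq\PgammaL(2,q)$, contains $\PSL(2,q)$, and has Klein-four Sylow $2$-subgroup'', i.e. that no even-order field automorphism can sneak in while keeping the Sylow $2$-subgroup of order $4$. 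This follows because the field automorphism of order $2$ (when $m$ is even) normalizes and acts nontrivially on a Sylow $2$-subgroup of $\PSL(2,q)$ in a way that enlarges the $2$-part, so it is excluded, and then the argument of \cite[Chapter~6~(8.9)]{Su86} finishes the parity claim for general $f$.
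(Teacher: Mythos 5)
Your proof reaches the right conclusion but takes a genuinely different entry point from the paper, and that entry point has a potential soft spot. The paper derives the Lemma directly from Walter's classification of finite groups with \emph{abelian} Sylow $2$-subgroups (Theorem~I of \cite{Walter1969}, together with the discussion in \cite[Proof of Theorem 6.8.7, Chapter~6~(8.9), Theorem 6.8.11]{Su86}); you instead invoke Gorenstein--Walter for dihedral Sylow $2$-subgroups and treat $C_2\times C_2$ as a degenerate $D_4$. The difficulty is that Gorenstein--Walter is customarily stated (and is stated in the reference the paper cites, \cite[Theorem on p.462]{Go68}, and in \S5 of this paper) for dihedral Sylow $2$-subgroups of order at least~$8$ --- the abelian case was handled separately precisely because its structure theory (e.g.\ strongly $2$-embedded subgroups do exist, see Lemma~\ref{BenderSuzuki_K} versus Lemma~\ref{BenderSuzuki}) behaves differently. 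So the step ``$G$ falls under the same theorem'' is not automatic from the citation you plan to use; you would either need to check that the particular formulation of Gorenstein--Walter you cite does not impose $|P|\ge 8$, or, more safely, cite Walter's 1969 theorem as the paper does. Once you are past that point, the bookkeeping is correct and essentially matches what the paper leaves implicit: $\mathfrak A_7$ has $D_8$, $\PGL(2,q)$ has Sylow $2$-subgroup of order $2\cdot(q^2-1)_2\ge 8$, and the Klein-four constraint on $\PSL(2,q)$ is exactly $q\equiv\pm3\pmod 8$.

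One small simplification you may want to note: the parity constraint on $f$ does not need a separate argument about even-order field automorphisms enlarging the $2$-part. If $q=r^m\equiv\pm 3\pmod 8$ with $r$ odd, then $m$ is automatically odd (since $r^2\equiv 1\pmod 8$ for any odd $r$, so $m$ even would force $q\equiv 1\pmod 8$), whence $\mathrm{Gal}(\IF_q/\IF_r)\cong C_m$ has odd order and every $f\mid m$ is odd. Combined with the observation that any subgroup of $\PgammaL(2,q)$ containing $\PSL(2,q)$ with even index over $\PSL(2,q)$ necessarily contains $\PGL(2,q)$ (because $\PgammaL(2,q)/\PSL(2,q)\cong C_{2m}$ is cyclic when $m$ is odd), this closes the case (3) cleanly without having to analyze the action of a putative order-$2$ field automorphism.
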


\begin{proof}
This follows from \cite[Theorem I]{Walter1969}, see also 
\cite[Proof of Theorem 6.8.7, (8.9) in Chapter 6 and Theorem 6.8.11]{Su86}.
\end{proof}

\begin{ass}
For the purposes of our computations, we split case (3) above  
in further subcases and say that $G$ satisfies the hypothesis:

\begin{enumerate}
   \item[\textbf{(K1)}] if $\overline{G}\cong P$;
   \item[\textbf{(K2)}] if $\overline{G}\cong 
\fA_4\cong{\mathrm{PSL}}(2,3)$;
   \item[\textbf{(K3)}] if 
$\overline{G}\cong \fA_5\cong{\mathrm{PSL}}(2,5)$;
   \item[\textbf{(K4)}] if $\overline{G}\cong 
{\mathrm{PSL}}(2,q)\rtimes C_f$, where   $q=r^m$ is a power of an odd prime $r$ 
with $3<q\equiv 3$ {\rm{(mod 8)}} and $f$ is odd  with $f|m$
(that is  
$C_f\leq{\mathrm{Gal}}(\mathbb F_q/\mathbb F_r)\cong C_m$);
\item[\textbf{(K5)}] 
if $\overline{G}\cong{\mathrm{PSL}}(2,q)\rtimes C_f$,  
where   $q=r^m$ is a power of an odd prime $r$
with $5<q\equiv 5$ {\rm{(mod 8)}} for an odd $f$ with $f|m$
(that is  
$C_f\leq{\mathrm{Gal}}(\mathbb F_q/\mathbb F_r)\cong C_m$). 
\end{enumerate}
\end{ass}

\begin{lem}\label{BenderSuzuki_K}
The following two conditions are equivalent:
\begin{enumerate}
\item[{\rm{(1)}}] $G$ has a strongly $2$-embedded subgroup.
\item[{\rm{(2)}}] $\overline G\cong\mathfrak A_5$.
\end{enumerate}
Moreover, if {\rm{(1)}} holds, then any strongly $2$-embedded subgroup $G_0$
of $G$ is of the form $HN$ where 
$N:=N_G(P)$ for a suitable choice of $P\in \Syl_2(G)$,
so that $G_0/O_{2'}(G_0)\cong \mathfrak A_4$.
\end{lem}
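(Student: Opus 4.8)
\textbf{Proof plan for Lemma~\ref{BenderSuzuki_K}.}
The plan is to deduce both the equivalence and the structural statement from the classification in Lemma~\ref{G_C2xC2} together with standard facts about strongly $2$-embedded subgroups. For the implication $(2)\Rightarrow(1)$: if $\overline G\cong\mathfrak A_5\cong\mathrm{PSL}(2,5)$, one exhibits an explicit strongly $2$-embedded subgroup. In $\mathfrak A_5$ the normalizer of a Klein-four subgroup $\overline P$ is $\overline P\rtimes C_3\cong\mathfrak A_4$, which is strongly $2$-embedded in $\mathfrak A_5$ (this is classical; it amounts to checking that distinct Sylow $2$-subgroups of $\mathfrak A_5$ intersect trivially, since $|\mathfrak A_5|_2=4$ and the Sylow $2$-subgroups are TI). Pulling this back along $G\twoheadrightarrow\overline G=G/H$ with $H=O_{2'}(G)$, the preimage $HN$ with $N=N_G(P)$ is proper in $G$, has order divisible by $2$, and since $H$ is a $2'$-group the $2$-local intersection property is inherited, so $HN$ is strongly $2$-embedded in $G$. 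This simultaneously proves the ``Moreover'' clause, because $N=N_G(P)$ and $(HN)/O_{2'}(HN)\cong N/(N\cap H')\cong \mathfrak A_4$ once one checks $O_{2'}(HN)=H$.

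For the converse $(1)\Rightarrow(2)$: suppose $G$ has a strongly $2$-embedded subgroup. Then the $2$-rank of $G$ is $2$ and, crucially, a Sylow $2$-subgroup intersects its distinct conjugates trivially. I would rule out cases (1) and (3) of Lemma~\ref{G_C2xC2}. Case (1), $\overline G\cong P$, means $G$ is $2$-nilpotent, $G=H\rtimes P$, and then $N_G(P)=H_0\rtimes P$ for some $2'$-subgroup meets $P$ nontrivially after conjugation unless $G=N_G(P)$, i.e. $G$ has a normal Sylow $2$-subgroup, which forces $G$ itself to be (almost) the only ``$2$-local'', contradicting the existence of a \emph{proper} strongly $2$-embedded subgroup (alternatively invoke that a group with a normal Sylow $p$-subgroup has no strongly $p$-embedded subgroup since $P\triangleleft G$ lies in every $2$-local). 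For case (3) with $\overline G\cong\mathrm{PSL}(2,q)\rtimes C_f$ and $q\equiv\pm3\pmod 8$, $q>3$: here one needs that $\mathrm{PSL}(2,q)$ for $q>5$ does \emph{not} have TI Sylow $2$-subgroups, equivalently does not have a strongly $2$-embedded subgroup. This is exactly Bender--Suzuki (as already cited in Lemma~\ref{BenderSuzuki}), or can be seen directly: in $\mathrm{PSL}(2,q)$ with $4\mid q-\varepsilon$ the normalizer of a Klein-four group is $\mathfrak A_4$ or $\mathfrak S_4$ and for $q>5$ there are more involutions than a single TI class allows; the extension by $C_f$ (odd order) does not change the $2$-structure. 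Since the property of having a strongly $2$-embedded subgroup passes to $G/O_{2'}(G)$ (as $O_{2'}(G)$ is a $2'$-group, a strongly $2$-embedded subgroup of $G$ necessarily contains $H$ and its image is strongly $2$-embedded in $\overline G$, and conversely), $\overline G$ must itself have one, leaving only $\overline G\cong\mathfrak A_5$.

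To finish the ``Moreover'' part rigorously, let $G_0$ be any strongly $2$-embedded subgroup of $G$. Then $G_0\supseteq N_G(S)$ for some $S\in\Syl_2(G)$ (a standard fact: any strongly $p$-embedded subgroup contains the normalizer of a Sylow $p$-subgroup, already recalled in \S\ref{sec:prelim}). Also $O_{2'}(G)=H\le G_0$ since $H$ is subnormal of odd order and a strongly $2$-embedded subgroup controls all $2$-local structure; more directly, $G_0$ has even order and $H\triangleleft G$ forces $H\cap G_0\triangleleft G_0$ with the complement... I would instead argue that $HS\le G_0$ and $HS$ is itself strongly $2$-embedded, hence by minimality-type considerations $G_0=HN_G(S)$: indeed $G_0/H$ is strongly $2$-embedded in $\overline G\cong\mathfrak A_5$, and the strongly $2$-embedded subgroups of $\mathfrak A_5$ are precisely the $\mathfrak A_4$'s, i.e. the $N_{\mathfrak A_5}(\overline P)$'s. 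Pulling back, $G_0=HN_G(P)$ with $N_G(P)$ a full preimage issue handled by $\overline P=HP/H$. Then $O_{2'}(G_0)=H$ and $G_0/H\cong\mathfrak A_4$, as claimed. The main obstacle I anticipate is the bookkeeping around $O_{2'}$: verifying that $O_{2'}(HN_G(P))=H$ exactly (so that the quotient is $\mathfrak A_4$ and not something smaller), and cleanly justifying that ``strongly $2$-embedded'' transfers both ways across $G\leftrightarrow\overline G$; both are routine but must be spelled out to avoid a gap.
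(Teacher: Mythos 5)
The paper's own proof is a one-line citation to the Bender--Suzuki theorem as presented in Suzuki's book (\cite[Theorem 6.4.2(2)]{Su86}), so your multi-step derivation from Lemma~\ref{G_C2xC2} plus explicit case analysis is genuinely a different presentation. It is not, however, a more elementary one: the decisive exclusion, namely that $\PSL(2,q)\rtimes C_f$ with $q>5$ and $q\equiv\pm3\pmod 8$ has no strongly $2$-embedded subgroup, still rests on Bender--Suzuki in your account. What your route buys is that it makes explicit the reduction to $\overline{G}$ and the identification of $G_0$ with $HN_G(P)$, which the paper leaves implicit in the citation.

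Two concrete gaps. First, in $(1)\Rightarrow(2)$ you announce you will rule out cases $(1)$ and $(3)$ of Lemma~\ref{G_C2xC2}, but case $(2)$, $\overline{G}\cong\mathfrak A_4$, is left untreated (and your case $(3)$ discussion only starts at $q>3$). The fix is short: if $\overline{G}\cong\mathfrak A_4$ there is $K\triangleleft G$ with $K/H\cong C_2\times C_2$, Schur--Zassenhaus gives $K=H\rtimes P$ with $P\in\Syl_2(G)$, a Frattini argument gives $G=KN_G(P)=HN_G(P)$, and once $H\le G_0$ is known one gets $G_0\supseteq HN_G(P)=G$, contradicting properness. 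Second, the inclusion $H=O_{2'}(G)\le G_0$ that you flag is indeed the point that must be justified; it does not follow from $H$ being subnormal of odd order. The correct argument uses coprime action: for each involution $u\in P\le G_0$, strong $2$-embedding forces $C_G(u)\le G_0$ (if $x\in C_G(u)\setminus G_0$ then $u\in G_0\cap x^{-1}G_0x$, of odd order, a contradiction), hence $C_H(u)\le G_0$; since $P\cong C_2\times C_2$ acts coprimely on $H$, the generation theorem for non-cyclic coprime action gives $H=\bigl\langle C_H(u): u\in P\setminus\{1\}\bigr\rangle\le G_0$. With $H\le G_0$ established, your transfer of strong $2$-embedding between $G$ and $\overline{G}$ is clean, because both $G_0$ and $x^{-1}G_0x$ contain $H$, so $(G_0\cap x^{-1}G_0x)/H=(G_0/H)\cap(x^{-1}G_0x/H)$ and oddness of order passes in both directions as $|H|$ is odd. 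The identity $O_{2'}(HN_G(P))=H$ then follows from $O_{2'}(\mathfrak A_4)=1$, and this makes the ``Moreover'' clause rigorous.
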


\begin{proof}
This follows from the Bender-Suzuki Theorem 
\cite[Theorem 6.4.2(2)]{Su86}.
\end{proof}

\begin{lem}\label{Multiplier_K}
The group ${\mathrm{H}}^2(\overline  G, k^\times)$ is trivial.
\end{lem}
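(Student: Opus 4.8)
The plan is to run through the three cases of Lemma~\ref{G_C2xC2} (equivalently the hypotheses \textbf{(K1)}--\textbf{(K5)}), exactly as in the proof of Lemma~\ref{Multiplier} for the dihedral case. First I would invoke the same reduction: since $k$ has characteristic $2$, \cite[Proposition~3.2]{Ya64} (see also \cite[Lemma~5]{Ko82} and \cite[Lemma~3.5.4(ii)]{NT89}) gives ${\mathrm{H}}^2(\overline G,k^\times)\cong M(\overline G)_{2'}$, the $2'$-part of the Schur multiplier. So it suffices to show that $M(\overline G)$ is a $2$-group in each case.

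If \textbf{(K1)} holds, then $\overline G\cong P\cong C_2\times C_2$ and $M(\overline G)\cong C_2$ by \cite[Theorem~2.7.4]{NT89} (or a direct computation), so the $2'$-part is trivial. If \textbf{(K2)} holds, then $\overline G\cong\mathfrak A_4$ and $M(\mathfrak A_4)\cong C_2$, again giving trivial $2'$-part. If \textbf{(K3)} holds, then $\overline G\cong\mathfrak A_5\cong{\mathrm{PSL}}(2,5)$ and $M(\mathfrak A_5)\cong C_2$; alternatively this is the case $q=5$ of the Steinberg computation $|M({\mathrm{PSL}}(2,q))|=2$ for $q\neq 9$ \cite[Theorem~4.9.1(ii)]{Ka85}. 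The remaining cases \textbf{(K4)} and \textbf{(K5)} have $\overline G\cong{\mathrm{PSL}}(2,q)\rtimes C_f$ with $q>3$ an odd prime power ($q\neq 9$, since $9\equiv 1\pmod 8$) and $f$ odd; here $\overline G/{\mathrm{PSL}}(2,q)\cong C_f$ is cyclic and ${\mathrm{PSL}}(2,q)$ is perfect, so \cite[Theorem~3.1(i)]{Jo74} gives $|M(\overline G)|$ divides $|M({\mathrm{PSL}}(2,q))|=2$ by \cite[Theorem~4.9.1(ii)]{Ka85}. In every case $M(\overline G)_{2'}=1$, hence ${\mathrm{H}}^2(\overline G,k^\times)=1$.

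There is essentially no obstacle here: the argument is a routine case-check that mirrors Lemma~\ref{Multiplier}, the only points to be careful about being (i) to note that $q\neq 9$ automatically in \textbf{(K4)}/\textbf{(K5)} so that Steinberg's formula applies uniformly, and (ii) that the extension by $C_f$ is cyclic so that Jones's bound is available even when ${\mathrm{PSL}}(2,q)$ is not simple-by-trivial in the relevant range. I would present it compactly as a four-line proof citing the same references as Lemma~\ref{Multiplier}.
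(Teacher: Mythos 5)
Your proposal is correct and follows essentially the same route as the paper: both reduce ${\mathrm{H}}^2(\overline G,k^\times)$ to the $2'$-part of the Schur multiplier via Yamazaki/Koshitani/Nagao-Tsushima, dispatch \textbf{(K1)} by the known multiplier of an abelian $2$-group, use Steinberg's $|M(\PSL(2,q))|=2$, and apply Jones's bound for the cyclic extension $\PSL(2,q)\rtimes C_f$. The only cosmetic difference is that you handle \textbf{(K2)} and \textbf{(K3)} by citing $M(\mathfrak A_4)\cong M(\mathfrak A_5)\cong C_2$ directly, whereas the paper folds $q=3$ (with the observation that $f=1$ is then forced) into the $\PSL(2,q)$ discussion before restricting to $q>3$.
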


\begin{proof}
Set $h:=|{\mathrm{H}}^2(\overline  G, k^\times)|$.
It follows from \cite[Proposition 3.2]{Ya64}
(see also \cite[Lemma 5]{Ko82} and
\cite[Lemma 3.5.4(ii)]{NT89}) that
$|{\mathrm{H}}^2(\overline  G,k^\times)|=|M(\overline  G)|_{2'}$
since $k$ has characteristic $2$.

If \textbf{(K1)} holds, then $h=1$ by \cite[Theorem 2.7.4]{NT89}. 
Otherwise, by Lemma~\ref{G_C2xC2}, we have  $\overline{G}\cong
{\mathrm{PSL}}(2,q)\rtimes C_f$, where $q$ 
is a power of an odd prime with $q\equiv\pm 3$ (mod 8),     
and $f$ is odd. 
First, note that $|M({\mathrm{PSL}}(2,q))|=2$,  
see \cite[Theorem 4.9.1(ii)]{Ka85}.
If $q=3$, then $\overline G\cong{\mathrm{PSL}}(2,3)\cong\mathfrak A_4$
and $f=1$, so that the assertion holds.

So we may assume $q>3$, and hence ${\mathrm{PSL}}(2,q)$ is 
non-abelian simple as is well known.
Since $\overline G/{\mathrm{PSL}}(2,q)$ is cyclic and
${\mathrm{PSL}}(2,q)$ is perfect, 
it follows from \cite[Theorem 3.1(i)]{Jo74}
that $|M(\overline  G)|\,{\Big|}\,|M({\mathrm{PSL}}(2,q))| = 2$,
so that $h=1$.
\end{proof}

\begin{lem}\label{GInvariant_K} 
Suppose that $\theta\in{\mathrm{Irr}}(H)$ and that
$V$ is an indecomposable $kG$-module such that $V{\downarrow}_H$
contains $\theta$ as a constituent.
If $\overline G\ {\not\cong}\ \mathfrak A_5$ and $V$ is endo-trivial, 
then $\theta$ is $G$-invariant.
\end{lem}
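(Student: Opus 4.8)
The plan is to deduce this from Lemma~\ref{GInvariantGeneral}, the analogous statement for composition factors of the restriction of an indecomposable endo-trivial module to a normal $p'$-subgroup, once the existence of a strongly $2$-embedded subgroup has been ruled out.

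First, by Lemma~\ref{BenderSuzuki_K}, the hypothesis $\overline G\not\cong\mathfrak A_5$ guarantees that $G$ has no strongly $2$-embedded subgroup. Next, since $H=O_{2'}(G)$ is a normal $2'$-subgroup and $k$ has characteristic $2$, the group algebra $kH$ is semisimple, and the decomposition map yields a canonical bijection $\mathrm{Irr}(H)\longleftrightarrow\mathrm{IBr}(H)$ between the ordinary irreducible characters of $H$ and the isomorphism classes of simple $kH$-modules; write $L$ for the simple $kH$-module corresponding to $\theta$. Under this bijection the $G$-conjugation action on $\mathrm{Irr}(H)$ corresponds to the $G$-conjugation action on the simple $kH$-modules, so $\theta$ is $G$-invariant if and only if $L$ is $G$-invariant. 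Moreover, $V{\downarrow}_H$ is a semisimple $kH$-module, so the statement that $\theta$ occurs as a constituent of $V{\downarrow}_H$ is equivalent to the statement that $L$ is a composition factor (equivalently, a direct summand) of $V{\downarrow}_H$.

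With this translation in place I would simply invoke Lemma~\ref{GInvariantGeneral}: as $V$ is indecomposable endo-trivial and $G$ has no strongly $2$-embedded subgroup, every composition factor of $V{\downarrow}_H$ is $G$-invariant; in particular $L$ is $G$-invariant, and hence so is $\theta$.

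The main point to be careful about is precisely this passage between ordinary characters of the $p'$-group $H$ and simple $kH$-modules, together with the compatibility of the two $G$-actions; beyond that there is no real obstacle, since all the substantive work --- the Fong--Reynolds reduction combined with Lemma~\ref{et_se} --- is already contained in Lemma~\ref{GInvariantGeneral}.
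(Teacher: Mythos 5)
Your proof is correct and follows exactly the paper's approach: rule out strongly $2$-embedded subgroups via Lemma~\ref{BenderSuzuki_K} and then invoke Lemma~\ref{GInvariantGeneral}. The paper's proof is terser and leaves the translation between $\mathrm{Irr}(H)$ and simple $kH$-modules implicit; your spelling it out is fine but not a different route.
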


\begin{proof}
If $\overline G\ {\not\cong}\ \mathfrak A_5$, 
then by Lemma~\ref{BenderSuzuki_K}, $G$ has no strongly $2$-embedded subgroups.
Therefore the claim follows from Lemma \ref{GInvariantGeneral}.
\end{proof}

\begin{prop}\label{H=1_K}
If $H=1$, then one of the following five cases holds:
\begin{enumerate}[\rm(a)]
\item If {\bf (K1)} holds, then  
$K(G)=\{[k_G]\}$.
\smallskip
\item If {\bf (K2)} holds, then 
$K(G)=X(G)=\{[k], [1_\omega], [1_{\omega^2}]\} \cong\mathbb Z/3\mathbb Z$, 
where $1_\omega$ and $1_{\omega^2}$ are the two non-trivial 
one-dimensional $kG$-modules.
\smallskip
\item If {\bf (K3)} holds, then 
$K(G)=\{[k], [5i],[(5i)^*]\}\cong K(\mathfrak A_4)
\cong\mathbb Z/3\mathbb Z$, 
and where $5i$, $(5i)^*$ are  uniserial,
trivial source, and  endo-trivial $kG$-modules in $B_0(G)$,  
both affording the unique irreducible character of degree $5$, $\chi_5\in{\mathrm{Irr}}(G)$. 
(See \cite[Lemma 4.1]{KL14}.)

\smallskip
\item If {\bf (K4)} holds, then $K(G)\cong X(G)\oplus TT_0(G)$, 
where $X(G)\cong \mathbb Z/f\mathbb Z$ and 
$TT_0(G)=\langle [(q-1)/2]\rangle\cong\mathbb Z/3\mathbb Z$
for $(q-1)/2$ a simple trivial source 
endo-trivial $kG$-module
of dimension $(q-1)/2$ affording an irreducible character
$\chi_{(q-1)/2}\in{\mathrm{Irr}}(B_0(G))$.

\smallskip

\item If {\bf (K5)} holds, then 
$K(G)\cong X(G)\oplus TT_0(G)$,
where $X(G)\cong\mathbb Z/f\mathbb Z$
and $TT_0(G)=\langle [V]\rangle\cong\mathbb Z/3\mathbb Z$
for a  trivial source 
endo-trivial $kG$-module $V$ such that $V$ is uniserial of 
length $3$  with composition factors $((q-1)/2)a$, $k_G$, 
$((q-1)/2)b$, where $((q-1)/2)a$ and $((q-1)/2)b$ are 
non-isomorphic simple $kG$-modules in $B_0(G)$ 
of dimension $(q-1)/2$, and $V$ affords the Steinberg character 
$\textup{St}_G\in\mathrm{Irr}(B_0(G))$ of degree $q$.
\end{enumerate}
\end{prop}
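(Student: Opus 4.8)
The plan is to treat the five cases \textbf{(K1)}--\textbf{(K5)} separately, using the structure of $N:=N_G(P)$ together with the Green correspondence as in Lemma~\ref{lem:TT(G)}(b), since $TT(G)=K(G)$ by Lemma~\ref{lem:TT(G)}(c) when $P$ is Klein-four. First I would settle \textbf{(K1)}: here $G=P$ is a $2$-group, so $K(G)=\{[k_G]\}$ is immediate. For the remaining cases, the key observation is that $N=P$ unless $f>1$, in which case $N=P\rtimes C_f$ with $C_f$ of odd order, so $X(N)\cong C_f$ and $K(N)=X(N)$ by Lemma~\ref{lem:TT(G)}(a). Since $K(G)$ embeds in $K(N)=X(N)$ via $\Res^G_N$, all one-dimensional $kN$-modules whose $kG$-Green correspondent is again one-dimensional contribute exactly $X(G)$, and I must determine which remaining linear characters of $N$ have endo-trivial Green correspondents of dimension $>1$.

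For \textbf{(K2)} ($\overline G=\fA_4$), $N=P$, so $K(G)=X(G)$ by Lemma~\ref{lem:TT(G)}(b), and since $[\fA_4:P]=3$ one has $X(\fA_4)\cong\IZ/3\IZ$; this is essentially the computation already recorded in \cite{KL14}. For \textbf{(K3)} ($\overline G=\fA_5$), Lemma~\ref{BenderSuzuki_K} tells us $G$ has a strongly $2$-embedded subgroup $G_0=N$ with $G_0/O_{2'}(G_0)\cong\fA_4$; then Lemma~\ref{lem:spe} gives $K(G)\cong K(G_0)$, and since $H=1$ forces $G_0=N_{\fA_5}(P)\cong\fA_4$, we get $K(G)\cong K(\fA_4)\cong\IZ/3\IZ$ from case (b). The identification of the relevant modules as the two uniserial trivial-source modules $5i$, $(5i)^*$ affording $\chi_5$ — and the verification via Theorem~\ref{thm:torchar} that they are indeed endo-trivial (their ordinary character takes value $1$ on the involutions) — is taken from \cite[Lemma~4.1]{KL14}.

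The substantive work is in \textbf{(K4)} and \textbf{(K5)}, where $\overline G=\PSL(2,q)\rtimes C_f$ with $q\equiv 3$ resp.\ $5\pmod 8$. Here $N=P\rtimes C_f$, so $X(N)\cong C_f$, and since $X(G)\cong C_f$ as well (as $\overline G/[\overline G,\overline G]\cong C_f$), the Green correspondents of the $f$ linear characters of $N$ account for $X(G)$. But the induced modules from $N$ to $G$ can also, after Green correspondence, yield honest higher-dimensional endo-trivial modules, and the point is that $K(G)/X(G)$ is cyclic of order $3$, generated by a module lying in $B_0(G)$. I would compute this inside $\PSL(2,q)$ using its ordinary and $2$-modular character theory (the principal block of $\PSL(2,q)$ in characteristic $2$ is well understood): in the $q\equiv3\pmod8$ case the block $B_0$ contains a character $\chi_{(q-1)/2}$ of degree $(q-1)/2$ which reduces mod $2$ to a simple module, and Theorem~\ref{thm:torchar} shows this module is endo-trivial because $\chi_{(q-1)/2}$ takes value $1$ on the non-trivial $2$-elements (a direct check from the character table of $\PSL(2,q)$); in the $q\equiv5\pmod8$ case the relevant module is instead the uniserial trivial-source module $V$ of length $3$ with socle series $((q-1)/2)a,\,k_G,\,((q-1)/2)b$ affording the Steinberg character $\mathrm{St}_G$ of degree $q$, again verified endo-trivial via Theorem~\ref{thm:torchar}. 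Since $TT_0(G)\cong X_0(G)$ would force order $1$ but the computation in $\PSL(2,q)$ gives a non-trivial trivial-source endo-trivial module, one gets $TT_0(G)\cong\IZ/3\IZ$; combined with $K(N)=X(N)\cong C_f$ and the fact that $X(G)\cap TT_0(G)=\{[k_G]\}$ (as $\gcd(f,3)$ can be arranged, or rather as the block decomposition separates them), one concludes $K(G)\cong X(G)\oplus TT_0(G)$.

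The main obstacle will be the explicit identification, in cases \textbf{(K4)} and \textbf{(K5)}, of the non-trivial generator of $TT_0(G)$ as the specific module described — this requires knowing the Loewy structure of the projective cover of $k_G$ in the principal $2$-block of $\PSL(2,q)$ and pinning down which ordinary character the trivial-source Green correspondent of a suitable $kN$-module lifts to. The passage from $\overline G$ back to $G$ is harmless here because $H=1$ is assumed, but one must still be careful that the field $k$ is a splitting field so that the dimensions $(q-1)/2$ and $q$ are the genuine $k$-dimensions; this is guaranteed by our standing hypothesis on $(K,\cO,k)$. The verification that the candidate modules are trivial source (equivalently, have vertex $P$ and trivial source) follows from Remark~\ref{rem:dim} together with the fact that they lie in blocks of full defect.
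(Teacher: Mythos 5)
Your description of $N := N_G(P)$ is incorrect, and this is a genuine error rather than a typo: for a Klein-four subgroup $P$ of $\PSL(2,q)$ with $q\equiv\pm3\pmod 8$, the normalizer $N_{\PSL(2,q)}(P)$ is isomorphic to $\fA_4$, so in all of \textbf{(K2)}--\textbf{(K5)} the normalizer $N_G(P)$ contains a copy of $\fA_4$ (equal to $\fA_4$ for \textbf{(K2)}, \textbf{(K3)} and to $\fA_4\rtimes C_f$ for \textbf{(K4)}, \textbf{(K5)}); it is never $P$ nor $P\rtimes C_f$. Consequently $X(N)$ has order $3f$, not $f$. The internal contradiction is already visible in \textbf{(K2)}: you write $N=P$, forcing $X(N)=\{[k]\}$, yet you correctly conclude $K(G)=X(G)\cong\IZ/3\IZ$ --- but $K(G)$ embeds into $X(N)$ via $\Res^G_N$ by Lemma~\ref{lem:TT(G)}(b), so both cannot hold. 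The paper's argument for (b) uses instead that $P\triangleleft\fA_4$ implies $N=\fA_4=G$ and $K(G)=X(G)$ by Lemma~\ref{lem:TT(G)}(a). In \textbf{(K4)} and \textbf{(K5)}, the extra $C_3$ factor inside $X(N)\cong C_3\times C_f$ (coming from the embedded $\fA_4$) is precisely what makes room for the $\IZ/3\IZ$ summand $TT_0(G)$; your upper bound $X(N)\cong C_f$ would wrongly force $K(G)=X(G)$.

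Apart from this, your strategy differs from the paper's in that you try to argue \textbf{(K4)} and \textbf{(K5)} from scratch, whereas the paper short-circuits the group-theoretic part by citing \cite[Theorem 1.4(d) and its proof]{KL14} for the isomorphism $K(G)\cong\IZ/f\IZ\oplus\IZ/3\IZ$, and only then identifies the generator of $TT_0(G)$: using \cite[Proposition 4.3(a)]{KL14} it lifts to a character $\chi_V\in\Irr(B_0(G))$, whose degree is read off the generic character table of $\PSL(2,q)$ via Theorem~\ref{thm:torchar}, and whose Loewy structure follows from the $2$-decomposition matrix of $\PSL(2,q)$. Your method for this last step agrees with the paper's, but the intermediate claims about $N$ must be corrected before the argument is sound.
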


\begin{proof}
(a) is clear since $G\cong C_2\times C_2$ is a $2$-group. 
Next $K(\fA_4)\cong K(\fA_5)\cong \IZ/3\IZ$ by 
\cite[Theorem 4.2 and its proof]{CMN09}. Moreover 
$K(\fA_4)\cong X(\fA_4)$ by Lemma~\ref{lem:TT(G)}(a) 
because $C_2\times C_2$ is normal in $\fA_4$, and 
the structure of the modules in $K(\fA_5)$ is given 
by  \cite[Lemma 4.1]{KL14}. Hence (b) and (c) hold.
\par
If {\bf (K4)} or {\bf (K5)} holds, then  
$K(G)\cong\mathbb Z/f\mathbb Z\oplus\mathbb Z/3\mathbb Z$ 
with $X(G)\cong \mathbb Z/f\mathbb Z$ and 
$TT_0(G)\cong \IZ/3\IZ$ by  \cite[Theorem 1.4(d) 
and its proof]{KL14}. 
Finally the structure of a generator $[V]$ of  
$TT_0(G)$ with $V$ indecomposable is obtained as follows. 
By \cite[Proposition 4.3(a)]{KL14}, 
the lift of $V$ to characteristic zero affords  
an irreducible character $\chi_V\in\Irr(B_0(G))$, 
so that using Theorem~\ref{thm:torchar}, by investigation of the generic character 
table of $\PSL(2,q)$ (see e.g. \cite[Table 5.4]{BonnafeSL2}) 
we see that $\chi_V(1)=(q-1)/2$ if 
{\bf (K4)} holds, and $\chi_V(1)=q$ if {\bf (K5)} holds. 
Then the composition factors of $V$  follow from the 
$2$-decomposition matrix of $\PSL(2,q)$, see 
\cite[Table 9.1]{BonnafeSL2}.
\end{proof}

\begin{proof}[Proof of Theorem~\ref{MainTheoremK}]
First $TT_0(G)\cong \IZ/3\IZ$ by 
\cite[Theorem 1.4 and its proof]{KL14}, and any 
indecomposable torsion endo-trivial module lifts 
to an irreducible ordinary character by 
\cite[Theorem 1.1(a)]{KL14}. In addition, since 
$P\cong C_2\times C_2$, $TT(G)=K(G)$  by 
Lemma~\ref{lem:TT(G)}(c).\par
Assume that {\rm\bf{(K1)}} holds, 
then $G$ is $2$-nilpotent, 
and  \cite[Theorem]{NR12} yields $K(G)=X(G)$, 
as was conjectured in \cite[Conjecture 3.6]{CMT11}. Hence (a) holds.
\par
Assume that  {\bf (K2)} holds, then $G$ is solvable by the Feit-Thompson
Theorem, so that $K(G)=X(G)$ by \cite[Theorem 6.2(2)]{CMT11}. 
Hence (b) holds.
\par
Assume that {\bf (K3)} holds. Then, by 
Lemma~\ref{BenderSuzuki_K}, $G$ has a strongly $2$-embedded
subgroup $G_0$ such that  $G_0/O_{2'}(G_0)\cong\mathfrak A_4$. 
Therefore $K(G)\cong K(G_0)$ by Lemma~\ref{lem:spe} and 
$K(G_0)\cong X(G_0)$ by (b).
The non-trivial indecomposable torsion endo-trivial $kG$-modules 
in $B_0(G)$ have dimension~$5$ by Proposition~\ref{H=1_K}(c). 
Hence (c) holds.\par
Assume that {\bf (K4)} or {\bf (K5)} holds. Then  $G$ has 
no strongly $2$-embedded subgroups, and $H^2(G,k^{\times})=1$ 
by Lemmas~\ref{BenderSuzuki_K}~and~\ref{Multiplier_K}.  Therefore, 
by Theorem~\ref{cor:KXK}, 
$$K(G)= X(G)+K(\overline G)\,,$$
where we identify 
$K(\overline G)$ with $\Inf_{\overline{G}}^{G}(K(\overline G))$.
In addition, by Proposition~\ref{H=1_K}{(d) and (e)},  we have 
$K(\overline{G})=X(\overline{G})
\oplus TT_0(\overline{G})\cong\mathbb Z/f\mathbb Z
\oplus\mathbb Z/3\mathbb Z$. 
Therefore
$$K(G)=X(G)+X(\overline{G})+TT_0(\overline{G})=X(G)+TT_0(G)$$
since $\Inf_{\overline{G}}^{G}(TT_0(\overline{G}) )=TT_0(G)$ and 
$\Inf_{\overline{G}}^{G}(X(\overline{G}))\leq X(G)$.
But $X(G)\cap TT_0(G)$ $=\{[k_G]\}$ by 
Proposition~\ref{H=1_K}(d) and (e), thus 
$$
K(G)\cong X(G)\oplus TT_0(G)\cong X(G)\oplus\IZ/3\IZ\,.
$$ 
Hence (d) holds.
\end{proof}


\vspace{1cm}

\noindent
{\bf Acknowledgements.}
{\small 
The authors are grateful to Ron Solomon
for answering a question on finite groups, to 
Gunter Malle for useful comments on a preliminary version of this text. 
They also wish to thank Burkhard K\"ulshammer, 
Geoffrey Robinson, Andrei Marcus, and Rebecca Waldecker 
for useful discussions.
The first author is grateful to the Swiss National Foundation of 
Sciences for supporting his visit at the TU Kaiserslautern in 
September 2014 through the second author's Fellowship for 
Prospective Researchers PBELP2$_{-}$143516,
and also to Gunter Malle for supporting his visit at the
TU Kaiserslautern in May 2015.}



\end{document}